\date{\scriptsize   Received: , Accepted: .}
\tikzstyle{every node}=[circle, draw, fill=black,inner sep=0pt, minimum width=4pt,
\newtheorem{theorem}{Theorem}[section]
\newtheorem{lemma}[theorem]{Lemma}
\newtheorem{fact}[theorem]{Fact}
\newtheorem{corollary}[theorem]{Corollary}
\theoremstyle{definition}
\newtheorem{definition}[theorem]{Definition}
\newtheorem{notation}[theorem]{Notation}
\theoremstyle{remark}
\newtheorem{remark}[theorem]{Remark}
\numberwithin{equation}{section}
\DeclareMathOperator{\dist}{dist} 
\newenvironment{mytikzpicture}{\begin{center}\vspace*{15pt}\begin{tikzpicture}}
{\
\end{tikzpicture}\vspace*{15pt}\end{center}
}
\DeclareMathOperator{\diag}{Diag}
\DeclareMathOperator{\theory}{Th}
\DeclareMathOperator{\age}{Age}
\DeclareMathOperator{\cl}{cl}
\DeclareMathOperator{\acl}{acl}
\DeclareMathOperator{\dcl}{dcl}
\DeclareMathOperator{\type}{tp}
\DeclareMathOperator{\cltype}{cltp}
\DeclareMathOperator{\dMe}{d}
\DeclareMathOperator{\idot}{i}
\DeclareMathOperator{\cMe}{c}
\DeclareMathOperator{\clforms}{CLF}
\DeclareMathOperator{\univ}{UNIV}
\DeclareMathOperator{\Nr}{Nr}
\DeclareMathOperator{\eq}{eq}
\DeclareMathOperator{\aut}{\mathbb{A}ut}
\DeclareMathOperator{\valc}{VAL}
\DeclareMathOperator{\val}{val}
\newcommand{\kplusomega}{\mathcal{K}_{\omega}}
\newcommand{\kplusomegabar}{\overline{\mathcal{K}_{\omega}}}
\newcommand{\kplusalpha}{\mathcal{K}_{\alpha}}
\newcommand{\kplusalphabar}{\overline{\mathcal{K}_{\alpha}}}
\newcommand{\kplusn}{\mathcal{K}_{n}}
\newcommand{\kplusnbar}{\overline{\mathcal{K}_{n}}}
\newcommand*{\freeJoin}[3]{#1\sqcup_{#2}#3}
\newcommand{\closed}[2][A]{#1\leq^{*}#2}
\newcommand{\minpair}[2][A]{#1\not\leq^{*}_{\min}#2}
\newcommand{\intrext}[2][A]{#1\leq^{*}_{\idot}#2}
\newcommand{\intrextw}[2][A]{#1\not\leq_{\idot}#2}
\newcommand{\fraisse}{Fra{\"i}ss{\'e}\hspace{4pt}}
\newcommand*{\card}[1][M]{\vert #1\vert}
\newcommand{\subsetfinite}{\subseteq_{\omega} }
\newcommand{\forkindepPrv}[1]{
	\mathrel{
		\mathop{
			\vcenter{
				\hbox{\oalign{\noalign{\kern-.3ex}\hfil$\vert$\hfil\cr
						\noalign{\kern-.7ex}
						$\smile$\cr\noalign{\kern-.3ex}}}
			}
		}\displaylimits_{#1}
	}
}
\newcommand{\forkindep}[3]{#1\forkindepPrv{#2}#3}
\newcommand{\indepPrv}[2][\Gamma]{
	\mathrel{
		\mathop{
			\vcenter{
				\hbox{{\oalign{\noalign{\kern-.3ex}\hfil$\hspace*{4pt}\vert^{#1} $\hfil\cr
							\noalign{\kern-.7ex}
							$ \smile $\cr\noalign{\kern-.3ex}}}}
			}
		}\displaylimits_{#2}
	}
}
\newcommand{\indep}[4][\Gamma]{#2\indepPrv[#1]{#3}#4}
\newcommand*{\hrchy}[2][s]{
	\ifthenelse{\equal{#1}{s}}
		{\Sigma\overset{^{\raisebox{1pt}{\tiny c}}}{_{\raisebox{-1pt}{\tiny #2}}}}
		{\ifthenelse{\equal{#1}{p}}
			{\Pi\overset{^{\raisebox{1pt}{\tiny c}}}{_{\raisebox{-1pt}{\tiny #2}}}}
			{\ifthenelse{\equal{#1}{h}}
				{\mathbf{H}\overset{^{\raisebox{1pt}{\tiny c}}}{_{\raisebox{-1pt}{\tiny #2}}}}
				{bbb}
			}
		}
}
\newcommand*{\hrchyLong}[2][s]{
	\ifthenelse{\equal{#1}{s}}
		{\Sigma\overset{\mkern -15mu ^{\raisebox{1pt}{\tiny c}}}{_{\raisebox{-1pt}{\tiny #2}}}}
		{\ifthenelse{\equal{#1}{p}}
			{\Pi\overset{\mkern -15mu ^{\raisebox{1pt}{\tiny c}}}{_{\raisebox{-1pt}{\tiny #2}}}}
			{\ifthenelse{\equal{#1}{h}}
				{\mathbf{H}\overset{\mkern -15mu ^{\raisebox{1pt}{\tiny c}}}{_{\raisebox{-1pt}{\tiny #2}}}}
				{bbb}
			}
		}
}
\newcommand*{\hrchySeq}[3][s]{
	\ifthenelse{\equal{#1}{s}}
		{\Sigma\overset{^{\raisebox{1pt}{\tiny $ \cMe_{#3} $ }}}{_{\raisebox{-1pt}{\tiny #2}}}}
		{\ifthenelse{\equal{#1}{p}}
			{\Pi\overset{^{\raisebox{1pt}{\tiny  $ \cMe_{#3} $ }}}{_{\raisebox{-1pt}{\tiny #2}}}}
			{\ifthenelse{\equal{#1}{h}}
				{\mathbf{H}\overset{^{\raisebox{1pt}{\tiny $ \cMe_{#3} $ }}}{_{\raisebox{-1pt}{\tiny #2}}}}
				{bbb}
			}
		}
}
\newcommand{\proofAVName}{Claim}
\begin{document}

 
\title[Strict Superstablity and Decidability of Certain Generic Graphs]{Strict Superstablity and Decidability of Certain Generic Graphs} 
 
\author[A.N. ‌Valizadeh]{Ali N. Valizadeh}
\address[Ali N. Valizadeh]{Department of Mathematics and Computer Science, Amirkabir University of Technology,  Tehran, IRAN.}
\email{valizadeh.ali@aut.ac.ir}

\author[M. Pourmahdian]{Massoud Pourmahdian $^*$}
\address[M. Pourmahdian]{
Department of Mathematics and Computer Science, Amirkabir University of Technology,  Tehran, IRAN.}
\address{
School of Mathematics, Institute for Research in Fundamental Sciences (IPM),  Tehran, IRAN.}
\email{pourmahd@ipm.ir}

  \thanks{$^*$ Corresponding author}
%
 
 \maketitle
%

\begin{abstract}
We show that the Hrushovski-\fraisse limit of certain classes of trees lead to strictly superstable theories of various U-ranks. In fact, for each $ \alpha\in\omega+1\backslash\{0\} $ we introduce a strictly superstable theory of U-rank $ \alpha. $ Furthermore, we show that these theories are decidable and pseudofinite.\\
\textbf{Keywords:}  Hrushovski constructions, generic strcutures, strictly superstable, Lascar rank, predimension, pseudofinite structures, ultraflat graphs.  \\
\textbf{MSC(2010):}  Primary 03C99, Secondary 05C63.
\end{abstract}
 
\section{\bf Introduction}\label{secIntro}
This paper introduces a variety of \textit{ultraflat} Hrushovski-\fraisse classes of acyclic graphs whose limits are strictly superstable and pseudofinite. A graph is called ultraflat if it does not contain any subgraph isomorphic to a graph obtained by adding new vertices on the edges of a fixed complete graph (\Cref{dfnComplGraphDivided,dfnUltraflat}). Since introducing the Hrushovski constructions in \cite{Hrushovski-NewStrongly}, several generalizations and investigations have been made on the subject. While Hrushovski's \textit{ab-initio} was intended to assemble a strongly minimal structure that refuted the Zilber's conjecture, various generalizations were seeking to find new examples in higher orders of the hierarchy of the classification theory. 

A thorough analysis of generic structures having stable theories appeared in \cite{Baldwin&Shi-StableGen} and continued by introducing a first order version of genericity, called semigenericity, in \cite{Baldwin&Shelah-Randomness} resulting in an axiomatization of the almost sure theory of random hypergraphs in which edge probability is defined using irrational powers less than 1. Later, the same notion of semigenericity was applied to other classes of finite structures in \cite{Pourmahd-SimpleGen} and \cite{Pourmahd-SmoothClasses} that led to a simple context in both first-order theories and non-elementary classes. There are other results obtained using Hrushovski \textit{ab-initio} constructions, a more remarkable among others was the introduction of an almost strongly minimal non-Desarguesian projective plane in \cite{Baldwin-NonDesarg} refuting other aspects of Zilber's conjecture.

The question of finding a strictly superstable \textit{ab-initio} generic structure was first asked in \cite[Question 12]{Baldwin-Pathological}. Positive answers to this question, using somewhat complicated constructions, were given by Ikeda and Kikyo in \cite{Ikeda-AbInitioSuperstable} and \cite{Ikeda&Kikyo-OnSuperStableGenerics}. In this paper, working with trees, we introduce a variety of strictly superstable generic structures whose ranks vary from $ 1 $ to $ \omega. $ 

The constructions given here fit naturally into the context of ultraflat graphs that are a particular well-behaved subclass of nowhere dense graphs (equivalently, superflat graphs \cite{Adler-nowherdense}). It is known that nowhere dense classes are tame from the view point of both stability and algorithmic model theory (\cite{Nesetril-Sparcity}, \cite{Podewski&Ziegler-StableGraphs}). In particular, every ultraflat graph is superstable (\cite{Herre-SuperstableGraphs}). 

On the other hand, applying finite Ehrenfeucht-\fraisse games that are well established for sparse random graphs enables us to prove decidability and pseudofiniteness for these structures.

\section{\bf Preliminaries}\label{secPrelim}
To fix notation, let $ \mathcal{L} $ be a finite language consisting only of a binary relation $ R. $ Unless stated otherwise, finite $ \mathcal{L} $-structures are shown by $ A, B, \ldots, $ and the possibly infinite structures are denoted by $ M, N, $ etc. For $ A,B\subseteq M, $ by $ AB $ we mean the structure that is induced from $ M $ on $ A\cup B. $ We also write $ A\subseteq_{\omega}M $ to mean that $ A $ is a finite substructure of $ M $ and by $ \age(M) $ we denote the set of all finite substructures of $ M. $

We define a function $ \delta, $ called the \textit{predimension} function, on all finite graphs, assuming $ R $ is symmetric and anti-reflexive, by letting
\[ \delta(A):= |A| - |R[A]|, \]
where $ |R[A]| $ denotes the number of edges in $ A. $

Let $ \kplusomega $ be the class of finite graphs given by
\[ \kplusomega:=\Big\{A : |A|<\aleph_{0}\text{ and } \delta(B)>0 \text{ for every non-empty } B\subseteq A  \Big\}. \]

For each $ n\geq 1, $ let $ \kplusn $ be the class of all members of $ \kplusomega $ satisfying the following axiom
\[ \forall x\bigg[\deg(x)\geq n+2\longrightarrow \neg\exists y_{1}\ldots y_{2n}\bigg(R(x,y_{1})\wedge\bigwedge_{1\leq i<j\leq2n} y_{i}\neq y_{j}\wedge\bigwedge_{i=2}^{2n}R(y_{i-1},y_{i})\bigg)\bigg].  \]

In other words, if $ \deg(x)\geq n + 2, $ then there is no path of length $ 2n + 1 $ starting at $ x. $

Finally, for $ n=0, $ let
 \[ \mathcal{K}_{0}:=\Big\{A\in\kplusomega : \deg(a)\leq3 \text{ for all } a\in A\Big\}. \]

As a convention, we assume that all classes introduced above contain the empty set. For each $ \alpha\in\omega+1, $ we denote by $ \kplusalphabar $ the class of all $ \mathcal{L} $-structures $ M $ whose finite substructures lie in $ \kplusalpha $, namely $ \age(M)\subseteq\kplusalpha. $

We recall some basic definitions and facts that are standard in the context of Hrushovski constructions. The reader might refer to \cite{Wagner-Relational, Baldwin&Shi-StableGen, Pourmahd-SimpleGen, Pourmahd-SmoothClasses} and \cite{Vali&Pourmahd-nonACGenerics} for further analysis.

	\begin{definition}\label{dfnCloseness} For $ A,B\in\kplusalpha,$
	\begin{itemize}
		\item[(i)] We say that $ A $ is \textit{closed} or \textit{strong} in $ B $ and in notations we write $ \closed{B}, $ if  $ A\subseteq B $ and for any $ C\subseteq B $ with $ A\subsetneq C $ we have that $ \delta(C)>\delta(A). $
		\item[(ii)]  We say that $ A $ is \textit{weakly closed} or \textit{weakly strong} in $ B $ and in notations we write $ A\leq B, $ if  $ A\subseteq B $ and for any $ C\subseteq B $ with $ A\subseteq C $ we have that $ \delta(C)\geq\delta(A). $
		\item[(iii)] For $ M\in\kplusalphabar $ and $ A\subseteq_{\omega} M $ we say that $ A $ is closed in $ M, $ denoted by $ A\leq^{*}M, $ if for any $ B\subseteq_{\omega} M $ with $ A\subseteq B $ we have that $ A\leq^{*}B. $ We may define $ A\leq M $ in a similar way.
		\item[(iv)] If $ A,B\subseteq D\in\kplusalpha, $ then the \textit{relative predimension} of $ B $ over $ A $ in $ D $ is defined as $ \delta(B/A):=\delta(AB)-\delta(A). $
		
	\end{itemize}
\end{definition}

The following lemma shows that the class $ \langle\kplusalpha,\leq^{*}\rangle $ possesses a natural graph theoretic interpretation.
\begin{lemma}\label{lmaGeneralProperties}
	Suppose that $ A $ is a finite $ \mathcal{L} $-structure.
		\begin{itemize}
			\item[(i)] $ A\in\kplusomega\Leftrightarrow $ the number of edges in $ A $ is strictly less than the number of vertices of $ A\Leftrightarrow A $ is an acyclic graph.
			\item[(ii)] If $ A\in\kplusalpha $ has $ k $ many connected components, then $ \delta(A)=k. $
			\item[(iii)] $ A\leq^{*}B\in\kplusalpha $ if and only if $ B\backslash A $ is not connected to $ A. $
		\end{itemize}
\end{lemma}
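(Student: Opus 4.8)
The plan is to establish the three parts in order, deriving (ii) and (iii) from (i) together with elementary facts about forests.

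For part (i) I would argue around a cycle of implications. That an acyclic $A$ has strictly fewer edges than vertices is the standard forest identity: a forest on $n$ vertices with $c$ connected components has exactly $n-c$ edges, so $|R[A]| = |A| - c < |A|$ for nonempty $A$ (as $c\geq 1$). For the link with $\kplusomega$, the key observation is that a cycle on $m$ vertices has $m$ edges and hence predimension $m-m=0$; therefore any graph containing a cycle has a nonempty subgraph of nonpositive predimension and fails the defining condition of $\kplusomega$. Contrapositively, $A\in\kplusomega$ forces $A$ to be acyclic. Conversely, acyclicity is inherited by every subgraph, and every nonempty forest has positive predimension (again $\delta = c\geq 1$), so an acyclic $A$ lies in $\kplusomega$; reading off $\delta(A)>0$ from the case $B=A$ recovers the edge--vertex inequality. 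The one point requiring care is that the bare inequality $|R[A]|<|A|$ must be applied hereditarily (equivalently, one must invoke the cycle characterization) to recover acyclicity, since the count for $A$ alone does not by itself exclude cycles.

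Part (ii) is then immediate: since $\kplusalpha\subseteq\kplusomega$, any $A\in\kplusalpha$ is a forest by (i), and writing its $k$ tree components on $n_1,\ldots,n_k$ vertices, each contributes $n_i-1$ edges, so $|R[A]| = \sum_i (n_i-1) = |A|-k$ and therefore $\delta(A)=|A|-(|A|-k)=k$. For part (iii) I would compute $\delta$ directly, first noting that $\kplusalpha$ is closed under substructures (acyclicity, the degree bound, and the path/degree axiom are all preserved when deleting vertices or edges), so every $C\subseteq B$ again lies in $\kplusalpha$ and has positive predimension on each nonempty piece. For $(\Leftarrow)$, if $B\setminus A$ sends no edge to $A$, then any $C$ with $A\subsetneq C\subseteq B$ decomposes as a disjoint graph-union of $A$ and a nonempty $D=C\setminus A$ with no edges between them, so $|C|=|A|+|D|$, $|R[C]|=|R[A]|+|R[D]|$, and hence $\delta(C)=\delta(A)+\delta(D)>\delta(A)$ because $\delta(D)>0$; thus $\closed{B}$. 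For $(\Rightarrow)$ I argue contrapositively: if some $v\in B\setminus A$ is joined to $A$, put $C=A\cup\{v\}$, so that $|C|=|A|+1$ while $|R[C]|\geq|R[A]|+1$, giving $\delta(C)\leq\delta(A)$ and contradicting $\closed{B}$. Hence closedness is equivalent to the absence of edges between $A$ and $B\setminus A$, i.e.\ to $B\setminus A$ not being connected to $A$.

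The main obstacle lies entirely in part (i): one must be careful that the edge--vertex count is used at the level of all subgraphs, via the observation that cycles are precisely the obstructions of predimension $0$, rather than for $A$ in isolation. Once (i) is secured, parts (ii) and (iii) reduce to short computations with the predimension.
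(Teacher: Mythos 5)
Your proof is correct and follows essentially the same route as the paper's: for (i) the observation that a cycle has predimension $0$, for (ii) the vertex--edge count in each tree component, and for (iii) the fact that a vertex of $B\backslash A$ joined to $A$ has relative predimension $\delta(b/A)\leq 0$. You are simply more explicit than the paper (which only spells out one direction of (iii)), and your caveat that the edge--vertex inequality in (i) must be read hereditarily rather than for $A$ in isolation is a fair and worthwhile clarification of the statement.
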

\begin{proof}
(i) If $ \bar{c}\subseteq A $ is a cycle, then $ \delta(\bar{c})=0 $ which implies that $ A\not\in\kplusomega. $ (ii) This is immediately followed by the fact that in a finite tree, the number of vertices equals the number of edges plus one. (iii) If $ b\in B\backslash A $ is connected to $ A, $ then $ \delta(b/A)\leq 0 $ which contradicts $ A\leq^{*}B. $
\end{proof}
\begin{definition}\label{dfnMinimal} For $ A,B\in\kplusalpha, $
\begin{itemize}
\item[(i)] $ (A,B) $ is called a \textit{minimal pair}, denoted by $ \minpair{B}, $ if  $ A\subseteq B $ and $ A $ is closed in any proper substructure of $ B $ containing $ A $ but is not closed in $ B. $ If $ \delta(B/A)=0, $ we call $ (A,B) $ a $ 0 $-minimal pair.
\item[(ii)] $ B $ is called an \textit{intrinsic extension} of $ A, $ denoted by $ \intrext{B}, $ if $ A\subseteq B $ and no proper substructure of $ B $ that contains $ A $ is closed in $ B. $ If $ \delta(B/A)=0, $ we call $ B $ a $ 0 $-intrinsic extension of $ A $.
\item[(iii)] For $ N\subseteq M\in\kplusalphabar $ the \textit{closure} of $ N $ in $ M $ is defined as the following
\[ \cl^{*}_{M}(N)=\bigcup\Big\{E\subsetfinite M\Big|\intrext{E} \text{ for some } A\subsetfinite N\Big\}. \]
\end{itemize}
The corresponding notations for weak closedness (\Cref{dfnCloseness}) are denoted respectively by $ A\not\leq_{\min}B, A\leq_{\idot}B $ and $ \cl_{M}(N). $
\end{definition}
Based on the predimension, we can also define the \textit{dimension} function as the following.
\begin{definition}\label{dfnDimension}
Suppose that $ A, B\subseteq_{\omega}M\in\kplusalphabar. $ 
\begin{itemize}
\item[(i)] The \textit{dimension} of $ A $ in $ M $ is defined as 
\[ \dMe_{M}(A):=\inf\{\delta(C)|A\subseteq C\subseteq_{\omega} M\}. \]
\item[(ii)] The \textit{relative} dimension of $ B $ over $ A $ (with respect to $ M $) is defined as
\[ \dMe_{M}(B/A):=\dMe_{M}(AB)-\dMe_{M}(A). \]
\item[(iii)] If $ N $ is an arbitrary substructure of $ M, $ then the relative dimension of $ A $ over $ N $ is defined as the following
\[ \dMe_{M}(A/N):=\inf\{\dMe_{M}(A/B)|B\subseteq_{\omega} N\}. \]
\end{itemize}
\end{definition}
\begin{lemma}\label{lmaMinimalPairs}
	Suppose that $ A,B\in\kplusalpha. $
	\begin{itemize}
	\item[(i)] If $ \minpair[A]{B}, $ then $ \delta(B/A)\leq 0 $ and for any $ C $ with $ A\subsetneq C\subsetneq B $ we have that $ \delta(C/A)> 0. $
	\item[(ii)] If $ \intrext[A]{B}, $ then $ B $ is the union of a chain of minimal pairs as the following
		\[ A=\minpair[B_{0}]{B_{1}}\minpair[]{}\cdots\minpair[]{B_{n}}=B. \]
	\item[(iii)] If $ A, B_{1}, B_{2}\subseteq M $ with $ \intrext[A]{B_{1}} $ and $ \intrext[A]{B_{2}}, $ then we have that $ \intrext[A]{B_{1}B_{2}} $.
	\item[(iv)] If $ \intrext[A]{B}, $ then for any $ C $ with $ A\subseteq C\subseteq B $ we have that $ \delta(B/C)\leq 0. $
	\end{itemize}
	Similar facts hold for $ \leq $ and its corresponding notions, when we replace $ \text{(i)}, \text{(iv)} $ by $ \text{(i)}', \text{(iv)}' $ below:
	\begin{itemize}
		\item[$ \text{(i)}' $] If $ A\not\leq_{\min}B, $ then $ \delta(B/A)< 0 $ and for any $ C $ with $ A\subsetneq C\subsetneq B $ we have that $ \delta(C/A)\geq 0. $
		\item[$ \text{(iv)}' $] If $  A\leq_{\idot}B, $ then for any $ C $ with $ A\subseteq C\subseteq B $ we have that $ \delta(B/C)< 0. $
		\end{itemize}
\end{lemma}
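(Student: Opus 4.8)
The plan is to base everything on the submodularity of the predimension, namely the inequality $\delta(X\cup Y)+\delta(X\cap Y)\le\delta(X)+\delta(Y)$, which is immediate from $|X\cup Y|+|X\cap Y|=|X|+|Y|$ together with $|R[X\cup Y]|\ge|R[X]|+|R[Y]|-|R[X\cap Y]|$. From this I would first extract the standard intersection principle: if $\closed[C]{M}$ and $D\subseteq M$, then $\closed[C\cap D]{D}$. Indeed, for $C\cap D\subsetneq E\subseteq D$ one has $C\cap E=C\cap D$ and $C\subsetneq C\cup E$, so closedness of $C$ gives $\delta(C\cup E)>\delta(C)$, and submodularity then yields $\delta(E)>\delta(C\cap D)$. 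I would also record the elementary downward-closure observation that if $\intrext{B}$ and $A\subseteq C\subseteq B$, then $\intrext[C]{B}$, since every proper substructure of $B$ containing $C$ also contains $A$ and hence fails to be closed in $B$.

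For (i) and $\text{(i)}'$ I would argue directly from \Cref{dfnMinimal}. If $\minpair{B}$, then $A$ is closed in every proper $C$ with $A\subseteq C\subsetneq B$, so taking $D=C$ in the definition of closedness gives $\delta(C)>\delta(A)$, i.e. $\delta(C/A)>0$; and since $A$ is not closed in $B$ there is some $A\subsetneq D\subseteq B$ with $\delta(D)\le\delta(A)$, which by the previous sentence cannot be proper, forcing $D=B$ and $\delta(B/A)\le0$. The weak case $\text{(i)}'$ is identical, reading $\ge$ for the intermediate structures and the strict $<$ at $B$ off the failure of weak closedness. For (ii) I would induct on $|B\setminus A|$: choosing $B_{1}$ minimal among all $D$ with $A\subsetneq D\subseteq B$ and $A$ not closed in $D$ (this set being nonempty since $A$ is not closed in $B$, the case $A=B$ being trivial), minimality makes $A$ closed in every proper substructure of $B_{1}$ containing $A$, so $\minpair[A]{B_{1}}$. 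The downward-closure observation gives $\intrext[B_{1}]{B}$, and since $|B\setminus B_{1}|<|B\setminus A|$ the induction hypothesis produces a chain of minimal pairs from $B_{1}$ to $B$, which I prepend with $(A,B_{1})$.

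The real content is (iii), and this is where I expect the main effort to lie. Assuming toward a contradiction that some $C$ with $A\subseteq C\subsetneq B_{1}B_{2}$ is closed in $B_{1}B_{2}$, the intersection principle gives $\closed[C\cap B_{1}]{B_{1}}$ and $\closed[C\cap B_{2}]{B_{2}}$; since $A\subseteq C\cap B_{i}$ and $\intrext[A]{B_{i}}$ forbids proper closed substructures over $A$, I obtain $C\cap B_{i}=B_{i}$, hence $B_{1}\cup B_{2}\subseteq C$ and $C=B_{1}B_{2}$, a contradiction.

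Finally, for (iv) and $\text{(iv)}'$ I would combine the pieces: by downward closure $\intrext[C]{B}$, so (ii) expresses $B$ as a chain of minimal pairs $C=C_{0}\subsetneq\cdots\subsetneq C_{m}=B$ over $C$, and summing the relative predimensions supplied by (i) (resp. $\text{(i)}'$) telescopes to $\delta(B/C)=\sum_{j}\delta(C_{j}/C_{j-1})\le0$ (resp. $<0$ once $C\subsetneq B$, the strictness coming from each individual step being negative). The only delicate point to flag is that the weak statement $\text{(iv)}'$ degenerates when $C=B$, where $\delta(B/C)=0$, so it should be read for $C\subsetneq B$.
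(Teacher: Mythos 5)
Your proposal is correct and follows essentially the same route as the paper: the same greedy choice of a minimal non-closed extension to build the chain in (ii), and the same intersection-with-a-closed-set contradiction for (iii), for which you rightly supply the submodularity/intersection-principle justification that the paper leaves implicit. Your remark that $\text{(iv)}'$ must be read with $C\subsetneq B$ (since $\delta(B/B)=0$) is a fair observation about the statement itself rather than a gap in your argument.
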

\begin{proof}
(i),(iv),$ \text{(i)}' $ and $ \text{(iv)}' $ are immediate from the definitions. (ii) Since $ A $ is not closed in $ B, $ there exists a minimal substructure of $ B $ containing $ A $ in which $ A $ is not closed; call it $ B_{1} $ and continue this process until covering $ B $ entirely. (iii) If it is not the case, then there is some $ C\subsetneq B_{1}B_{2} $ containing $ A $ that is closed in $ B_{1}B_{2}. $ Then, we have that $ C\cap B_{i}\leq^{*}B_{i}, $ for $ i=1,2. $ This leads to a contradiction.
\end{proof}
\begin{lemma}\label{lmaMinPair}
	Suppose that $ \minpair{B}\subsetfinite M\in\kplusalphabar, $ then $ B\backslash A $ is a singleton. Moreover, we have the following.
		\begin{itemize}
			\item[(i)] If $ \delta(B/A)=0, $ then $ B $ consists of a single element connected to $ A $ with only one edge.
			\item[(ii)] If $ \delta(B/A)<0, $ then $ B $ is a singleton with at least two relations to $ A $ and the number of distinct copies of $ B $ over $ A $ in $ M $ is 1.
		\end{itemize}
\end{lemma}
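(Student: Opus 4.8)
The plan is to reduce everything to understanding single-vertex extensions, then split according to the sign of $\delta(B/A)$, and finally exploit the acyclicity of \emph{all} of $M$ for the uniqueness claim in (ii).

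First I would show that $B\setminus A$ is a singleton. Since $(A,B)$ is a minimal pair, $A$ is not closed in $B$, so by \Cref{lmaGeneralProperties}(iii) there is a vertex $b\in B\setminus A$ joined by an edge to $A$. Consider $C:=A\cup\{b\}$. Again by \Cref{lmaGeneralProperties}(iii) we have $A\not\leq^{*}C$, so $A$ fails to be closed in $C$. But by \Cref{dfnMinimal}(i) minimality forces $A$ to be closed in every \emph{proper} substructure of $B$ containing $A$; hence $C$ cannot be proper, i.e.\ $C=B$ and $B\setminus A=\{b\}$.

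Next I would compute the relative predimension. Writing $e$ for the number of edges joining $b$ to $A$ (every edge of $B$ incident to $b$ lands in $A$, as $b$ is the only vertex outside $A$), we have $|B|=|A|+1$ and $|R[B]|=|R[A]|+e$, whence $\delta(B/A)=\delta(B)-\delta(A)=1-e$, and $e\geq1$ because $b$ is joined to $A$ (consistent with $\delta(B/A)\leq0$ from \Cref{lmaMinimalPairs}(i)). Case (i) is then immediate: $\delta(B/A)=0$ forces $e=1$, so $b$ is attached to $A$ by a single edge. In case (ii), $\delta(B/A)<0$ forces $e\geq2$, giving the required two or more relations to $A$.

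The remaining point, which I expect to carry the real content, is the uniqueness assertion in (ii). A copy of $B$ over $A$ in $M$ is a vertex $b'\in M\setminus A$ whose set of $A$-neighbors coincides with that of $b$; since $e\geq2$ and the graph is simple, fix two distinct such neighbors $a_{1},a_{2}\in A$. If there were two distinct copies $b'\neq b''$, both would be adjacent to $a_{1}$ and to $a_{2}$, producing the $4$-cycle $b'\,a_{1}\,b''\,a_{2}\,b'$ on four distinct vertices. This contradicts \Cref{lmaGeneralProperties}(i), since every finite substructure of $M\in\kplusalphabar$ is acyclic. Hence $b$ is the only such vertex and the number of copies is exactly $1$. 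The essential ingredient is precisely that acyclicity holds not merely for $B$ but for all of $M$; this is what upgrades the local computation of $\delta(B/A)$ to the global uniqueness statement.
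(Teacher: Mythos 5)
Your proof is correct and follows essentially the same route as the paper: both arguments rest on \Cref{lmaGeneralProperties}(iii) (closedness equals disconnection) to force $B\setminus A$ to be a single vertex adjacent to $A$, and both derive the uniqueness in (ii) from the acyclicity of $M$ (your explicit $4$-cycle $b'a_{1}b''a_{2}b'$ is just the paper's "no cycle in $M$" remark spelled out). The only cosmetic difference is that you argue the singleton claim directly while the paper argues it by contradiction.
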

\begin{proof}
	If $ B\backslash A $ has more than one element, then by the definition of a minimal pair, for each $ b\in B\backslash A $ we have that $ A\leq^{*}Ab. $ Hence, by part (iii) of \Cref{lmaGeneralProperties}, there is no relation between $ b $ and $ A. $ Consequently, there does not exist any relation between $ B\backslash A $ and $ A, $ which implies $ A\leq^{*}B. $ This contradicts the fact that $ \minpair[A]{B}. $ Furthermore, if $ \delta(B/A)<0, $ then the fact that $ M $ can not contain a cycle implies that $ M $ has only one copy of $ B $ over $ A. $ 
\end{proof}
\begin{corollary}\label{crlWeakClFinite}
For every $ A\subsetfinite M\in\kplusalphabar, $ we have that $ \cl_{M}(A) $ is a finite structure.
\end{corollary}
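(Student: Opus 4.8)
The plan is to show that for each finite $B\subseteq A$, every $E\subsetfinite M$ with $B\leq_{\idot}E$ is already contained in the union of the (unique, finite) paths of $M$ joining pairs of vertices of $B$. Since $A$ is finite this union is finite, and because $\cl_M(A)$ is the union of all such $E$ over the finitely many subsets $B\subseteq A$ (with $A\subseteq\cl_M(A)$ coming from $B=E=A$), finiteness of $\cl_M(A)$ follows. I would first record the geometric picture supplied by \Cref{lmaGeneralProperties}: every finite $F\subsetfinite M$ is a forest and, by item (ii), $\delta(F)$ equals its number of connected components. Thus the weak analogue $\mathrm{(iv)}'$ of \Cref{lmaMinimalPairs}, giving $\delta(E/C)<0$, says precisely that a weak intrinsic extension can only \emph{glue together} components of $B$.

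The key step is then the following. Given $B\leq_{\idot}E$, let $E^{*}$ be the subforest of $E$ consisting of $B$ together with the unique $E$-path between every pair of vertices of $B$ lying in one connected component of $E$ (the Steiner closure of $B$ in $E$). I claim $E^{*}\leq E$. For any $C$ with $E^{*}\subseteq C\subseteq E$, the vertices of $C\setminus E^{*}$ are branches of the forest $E$ hanging off $E^{*}$; adding such a vertex either leaves the number of components unchanged or increases it, since distinct components of $E^{*}$ lie in distinct components of the forest $E$ and hence cannot be joined by any vertex of $E$ without creating a cycle. Using \Cref{lmaGeneralProperties}(ii) this gives $\delta(C)\geq\delta(E^{*})$ for all such $C$, i.e. $E^{*}\leq E$. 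But $B\subseteq E^{*}\subseteq E$ and $E^{*}$ is weakly closed in $E$, so the definition of a weak intrinsic extension forces $E^{*}=E$. (An $E$-component meeting $B$ in the empty set would likewise be weakly closed after deletion, so no such component occurs.) Consequently $E\subseteq A^{+}:=A\cup\bigcup\{\,\mathrm{path}_M(a,a'):a,a'\in A\text{ lie in one component of }M\,\}$.

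To finish, since $A$ is finite there are finitely many pairs $a,a'$, and each path in the forest $M$ is a finite simple path, so $A^{+}$ is finite; as $\cl_M(A)\subseteq A^{+}$, it is finite.

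The main obstacle is precisely this key step, and it is worth stressing why the naive approach fails. In contrast with the \emph{closed} minimal pairs of \Cref{lmaMinPair}, a \emph{weak} minimal pair $A\not\leq_{\min}B$ need \emph{not} have $B\setminus A$ a singleton: a whole path joining two components of $A$ is a single weak minimal pair, and such paths can be arbitrarily long. Hence one cannot bound $\cl_M(A)$ by bounding the length of a chain of weak minimal pairs and multiplying by a per-step vertex bound. The component-counting argument establishing $E^{*}\leq E$ is exactly what forces the union defining $\cl_M(A)$ to collapse onto the finitely many finite Steiner paths inside $A^{+}$.
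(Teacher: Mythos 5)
Your proof is correct, and it takes a more explicit route than the paper's. The paper's (one-line) proof goes through the minimal-pair machinery: by the weak versions of \Cref{lmaMinimalPairs}(ii),(iii), every weak intrinsic extension of $A$ is a union of a chain of weak minimal pairs, the chain has length at most $\card[A]$ because each weak minimal pair strictly decreases $\delta$ (which by \Cref{lmaGeneralProperties}(ii) is the number of components, hence at most $\card[A]$), and acyclicity of $M$ -- the content of \Cref{lmaMinPair}(ii) -- makes each such pair \emph{uniquely realized} in $M$, so the union of all of them is a finite union of finite sets. You instead bypass the chain decomposition and prove directly that any $E$ with $B\leq_{\idot}E$ equals its Steiner closure $E^{*}$ over $B$, by showing $E^{*}\leq E$ via component counting; this yields the sharper and more informative conclusion $\cl_{M}(A)\subseteq A^{+}$, i.e.\ an explicit identification of the weak closure as the convex hull of $A$ in the forest $M$. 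Your key step is sound: every component of $E^{*}$ contains a vertex of $B$, components of $E^{*}$ are separated in $E$, so any intermediate $C$ has at least as many components as $E^{*}$, giving $\delta(C)\geq\delta(E^{*})$ by \Cref{lmaGeneralProperties}(ii); and the parenthetical disposal of $E$-components disjoint from $B$ is also needed and correct. One quibble: your closing claim that the ``naive'' chain-of-minimal-pairs approach \emph{fails} is overstated. It is true that a single weak minimal pair can add an arbitrarily long path, so there is no a priori per-step vertex bound; but each such path, being the unique bridge in the acyclic graph $M$ between two given components, is a fixed finite set, and a chain of length at most $\card[A]$ of uniquely determined finite steps still gives finiteness -- this is exactly what the paper's citation of \Cref{lmaMinPair}(ii) is supplying. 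So both arguments work; yours buys the explicit description $\cl_{M}(A)=A^{+}$, the paper's stays closer to the general Hrushovski-construction formalism.
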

\begin{proof}
Use the weak version of part (iii) in \Cref{dfnMinimal}, parts (ii), (iii) of \Cref{lmaMinimalPairs} and part (ii) of \Cref{lmaMinPair}. 
\end{proof}
\begin{lemma}\label{lmaGraphsClosurePath}
	For every $ \bar{a},b\in M\in\kplusalphabar, $ $ b\in\cl^{*}_{M}(\bar{a}) $ if and only if there is a path from $ b $ to $ \bar{a}. $
\end{lemma}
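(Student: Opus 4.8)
The plan is to prove both implications by exploiting the fact that, since $M\in\kplusalphabar$ is a forest (\Cref{lmaGeneralProperties}(i)), every intrinsic extension is assembled by repeatedly attaching single vertices along edges, and conversely a path is exactly such an assembly. I would therefore read the structural content of $\cl^{*}_{M}$ through the chain decomposition of intrinsic extensions into minimal pairs.

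For the forward direction, suppose $b\in\cl^{*}_{M}(\bar{a})$. By the definition of $\cl^{*}_{M}$ there are a finite $A\subseteq\bar{a}$ and an $E\subsetfinite M$ with $\intrext[A]{E}$ and $b\in E$. If $b\in A$ the conclusion is trivial, so assume $b\in E\setminus A$. I would apply \Cref{lmaMinimalPairs}(ii) to write $E$ as a chain of minimal pairs $A=B_{0}\not\leq^{*}_{\min}B_{1}\not\leq^{*}_{\min}\cdots\not\leq^{*}_{\min}B_{n}=E$, and then \Cref{lmaMinPair} to conclude that each $B_{i}\setminus B_{i-1}$ is a single vertex $d_{i}$ joined by at least one edge to $B_{i-1}=A\cup\{d_{1},\dots,d_{i-1}\}$. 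A straightforward induction on $i$ shows that each $d_{i}$, and in particular $b$, is connected to $A\subseteq\bar{a}$ by a walk; since $M$ is acyclic this walk contains a path.

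For the converse, suppose there is a path $b=c_{0},c_{1},\dots,c_{k}=a$ with $a\in\bar{a}$. I would take $A:=\{a\}\subseteq\bar{a}$ and $E:=\{c_{0},\dots,c_{k}\}$ and argue that $\intrext[A]{E}$, which at once gives $b=c_{0}\in\cl^{*}_{M}(\bar{a})$. To see that $E$ is an intrinsic extension of $A$, I would check directly that no proper substructure $C$ with $A\subseteq C\subsetneq E$ is closed in $E$: choosing the largest index $m$ with $c_{m}\notin C$, we have $m<k$ and $c_{m+1}\in C$, so the edge $c_{m}c_{m+1}$ forces $\delta(C\cup\{c_{m}\})\leq\delta(C)$, witnessing that $C$ is not closed in $E$. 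Equivalently, one simply reads off the chain of $0$-minimal pairs $\{c_{k}\}\not\leq^{*}_{\min}\{c_{k-1},c_{k}\}\not\leq^{*}_{\min}\cdots\not\leq^{*}_{\min}E$, each step attaching one vertex by exactly one edge, as in \Cref{lmaMinPair}(i).

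The main obstacle I anticipate is the converse direction, namely confirming that a path is genuinely an intrinsic extension rather than merely the support of a chosen chain of pairs: the definition of $\leq^{*}_{\idot}$ requires ruling out the closedness of \emph{all} intermediate substructures, not only those appearing in a particular chain, so it is the $\delta$-computation above, together with acyclicity (which forbids any extra edges inside $E$), that makes the argument airtight. In the forward direction the only delicate point is the passage from a walk to an honest path, which is immediate since members of $\kplusalphabar$ contain no cycles.
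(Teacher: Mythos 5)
Your argument is correct and follows the paper's own route: the forward direction via the decomposition of an intrinsic extension into a tower of minimal pairs (\Cref{lmaMinimalPairs}, \Cref{lmaMinPair}), and the converse by reading a path as a chain of $0$-minimal pairs. You merely spell out the $\delta$-computation and the walk-to-path step that the paper leaves implicit, so no further changes are needed.
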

\begin{proof}
	By \Cref{lmaMinimalPairs}, any intrinsic extension is built by a finite tower of minimal pairs. Hence, by \Cref{lmaMinPair}, it is obvious that any element in $ \cl^{*}_{M}(\bar{a}) $ is connected to $ \bar{a}. $ Moreover, a path is a finite chain of zero minimal extensions, hence any element that is connected to $ \bar{a} $ lies in its closure.
\end{proof}
The next corollary follows easily from \Cref{lmaGraphsClosurePath}
\begin{corollary}\label{crlGraphsClosurePathNew}
If $ M\in\kplusalphabar, $ and $ \bar{b}, A\subsetfinite M $ with $ A\leq^{*}M $ and $ A\cap\bar{b}=\varnothing, $ then we have that	 $ A\cap\cl^{*}_{M}(\bar{b})=\varnothing. $
\end{corollary}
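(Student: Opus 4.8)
The plan is to reduce the statement to the two graph-theoretic facts already at our disposal: the path characterization of the closure operator in \Cref{lmaGraphsClosurePath}, and part (iii) of \Cref{lmaGeneralProperties}, which says that $A\leq^{*}B$ is equivalent to $B\backslash A$ being unconnected to $A$. Informally, $A\leq^{*}M$ forces $A$ to be detached from the rest of $M$, so nothing outside $A$ can be reached from $A$ by a path; and the closure of $\bar{b}$ consists precisely of the vertices reachable from $\bar{b}$, all of which (since $A\cap\bar{b}=\varnothing$) would lie outside $A$.

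First I would argue by contradiction, assuming there is some $a\in A\cap\cl^{*}_{M}(\bar{b})$. Applying \Cref{lmaGraphsClosurePath} with $a$ in the role of the distinguished vertex and $\bar{b}$ in the role of the tuple, the membership $a\in\cl^{*}_{M}(\bar{b})$ produces a finite path in $M$ from $a$ to some coordinate $b$ of $\bar{b}$. Because $A\cap\bar{b}=\varnothing$, the endpoint $b$ lies outside $A$ while the initial vertex $a$ lies in $A$; hence, traversing the path from $a$, there must be a first step that leaves $A$, that is, an edge $\{v,v'\}$ with $v\in A$ and $v'\notin A$.

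Next I would absorb the path into a finite substructure in order to invoke the finite incarnation of closedness. Let $B\subseteq_{\omega}M$ be the structure induced on $A$ together with all the vertices occurring on the path, so that $A\subseteq B\subseteq_{\omega}M$. Since $A\leq^{*}M$, clause (iii) of \Cref{dfnCloseness} gives $A\leq^{*}B$, and part (iii) of \Cref{lmaGeneralProperties} then tells us that $B\backslash A$ is unconnected to $A$. This directly contradicts the edge $\{v,v'\}$ exhibited above, since $v\in A$ and $v'\in B\backslash A$. Therefore no such $a$ can exist, and we conclude $A\cap\cl^{*}_{M}(\bar{b})=\varnothing$.

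I do not anticipate a substantive obstacle, as all the real content is packaged in the two cited results; the only point requiring a little care is the passage from the infinitary condition $A\leq^{*}M$ to its finite version $A\leq^{*}B$, which is exactly what clause (iii) of \Cref{dfnCloseness} licenses once the (finite) path has been incorporated into $B$.
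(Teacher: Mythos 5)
Your proof is correct and follows essentially the same route the paper intends: the paper gives no explicit argument, stating only that the corollary ``follows easily from \Cref{lmaGraphsClosurePath},'' and your contradiction via the path characterization together with \Cref{lmaGeneralProperties}(iii) and \Cref{dfnCloseness}(iii) is exactly that easy deduction, spelled out. The one detail worth having checked --- that the hypothesis $A\cap\bar{b}=\varnothing$ forces the path to have positive length and hence to leave $A$ --- is handled correctly.
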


\begin{lemma}\label{lmaGraphsClosureUniquePath}
	Suppose that $ \bar{a},b\in M\in\kplusalphabar $ with $ \bar{a}\leq M. $ Then $ b\in\cl^{*}_{M}(\bar{a}) $ if and only if there is a unique path $ \bar{p} $ from $ b $ to $ \bar{a} $ with the property that $ \card[\bar{p}\cap\bar{a}]=1. $
\end{lemma}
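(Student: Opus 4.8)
The plan is to exploit the fact that $ M $ is acyclic: since $ \age(M)\subseteq\kplusalpha\subseteq\kplusomega, $ \Cref{lmaGeneralProperties}(i) forces every finite substructure of $ M $ to be a forest, and as any cycle would already occur in a finite substructure, $ M $ itself is a forest. In a forest there is at most one path between any two vertices, so the whole difficulty is concentrated in (a) identifying a single ``entry point'' among the vertices of $ \bar{a} $ that $ b $ can reach, and (b) extracting the correct consequence of the hypothesis $ \bar{a}\leq M. $

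The key reduction I would establish first is a graph-theoretic reading of weak closedness: if $ \bar{a}\leq M, $ then no two vertices lying in distinct connected components of the induced subgraph on $ \bar{a} $ can be joined by a path in $ M; $ equivalently, each connected component of $ M $ meets at most one connected component of $ \bar{a}. $ To see this, suppose $ u,v\in\bar{a} $ sit in different components of $ \bar{a} $ but in the same component of $ M, $ and let $ P $ be a simple path joining them in $ M. $ Putting $ C:=\bar{a}\cup V(P), $ the path $ P $ merges the components of $ u $ and $ v, $ so the forest $ C $ has strictly fewer connected components than $ \bar{a}; $ since $ C $ is finite, \Cref{lmaGeneralProperties}(ii) yields $ \delta(C)<\delta(\bar{a}), $ contradicting $ \bar{a}\leq M. $

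With this in hand the equivalence is routine. For the direction $ (\Leftarrow), $ the mere existence of a path from $ b $ to $ \bar{a} $ already gives $ b\in\cl^{*}_{M}(\bar{a}) $ by \Cref{lmaGraphsClosurePath}, so the extra properties of $ \bar{p} $ are not needed here. For $ (\Rightarrow), $ \Cref{lmaGraphsClosurePath} supplies some path from $ b $ to $ \bar{a}, $ so $ b $ lies in a component $ T $ of $ M $ that meets $ \bar{a}; $ by the reduction above, all vertices of $ \bar{a}\cap T $ belong to a single component $ C $ of $ \bar{a}, $ and $ C $ is a subtree of the tree $ T. $ Let $ c $ be the vertex of $ C $ nearest to $ b $ in $ T. $ The unique $ T $-path $ \bar{p} $ from $ b $ to $ c $ meets $ C $ only in $ c $ (an earlier vertex of $ C $ would be nearer to $ b $), hence meets $ \bar{a} $ only in $ c, $ so $ \card[\bar{p}\cap\bar{a}]=1. $ Uniqueness follows because any path from $ b $ to $ \bar{a} $ with this property must end at a vertex of $ C, $ and, $ T $ being a tree, must pass through $ c; $ were its endpoint different from $ c, $ the vertex $ c\in\bar{a} $ would be an interior vertex, violating the single-intersection condition.

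I expect the only genuine obstacle to be the reduction in the second paragraph: converting the abstract inequality defining $ \bar{a}\leq M $ into the clean statement that distinct components of $ \bar{a} $ are separated in $ M. $ Once that is available, everything reduces to the elementary fact that a subtree of a tree has a unique closest point to any given vertex.
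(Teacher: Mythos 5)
Your argument is correct, and it is organized differently from the paper's. The paper disposes of uniqueness in one stroke: given two distinct paths $\bar{p},\bar{p}'$ from $b$ to $\bar{a}$ each meeting $\bar{a}$ once, an induction on $\card[\bar{p}\cup\bar{p}']$ produces a subset $B\subseteq\bar{p}\cup\bar{p}'$ with $\delta(B/\bar{a})<0$, contradicting $\bar{a}\leq M$ (existence and the converse direction are left to \Cref{lmaGraphsClosurePath}, as in your proposal). You instead factor the predimension computation into a single clean separation lemma --- distinct connected components of $\bar{a}$ lie in distinct components of $M$, proved by counting components via \Cref{lmaGeneralProperties}(ii) --- and then run purely combinatorial tree arguments (unique paths in a forest, the nearest point of a subtree) with no further reference to $\delta$. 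The underlying mechanism is the same inequality, since $\delta(B/\bar{a})<0$ is exactly $\delta(\bar{a}B)<\delta(\bar{a})$, and the offending set the paper extracts from $\bar{p}\cup\bar{p}'$ is precisely one that merges components of $\bar{a}$ or closes a cycle; but your decomposition replaces the paper's unspecified induction with a transparent one-line count, and it is more complete in that it actually exhibits the path (via the nearest vertex $c$ of the relevant component) rather than only ruling out a second one. The only caveat is the degenerate case $b\in\bar{a}$, which both you and the paper silently ignore.
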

\begin{proof}
	Suppose that there exist two such paths, say $ \bar{p} $ and $ \bar{p}'. $ By an  induction on the length of $ \bar{p}\cup\bar{p}', $ one can show that there is a subset $ B\subseteq\bar{p}\cup\bar{p}' $ with $ \delta(B/\bar{a})<0. $ This contradicts the fact that $ \bar{a}\leq M. $
\end{proof}
In the following corollary, we collect some easy characterizations of the introduced concepts above.
\begin{corollary}\label{crlEasyCharac}
Let $ A,B\in\kplusalpha. $
\begin{itemize}
\item[(i)] $ \intrext{B} $ if and only if $ A\not\leq^{*} B $ and every connected component of $ B\backslash A $ is attached to $ A. $
\item[(ii)] $ (A,B) $ is a weakly minimal pair if and only if $ B\backslash A=\{b_{1},\ldots, b_{m}\} $ and there are $ a,a' $ in different connected components of $ A $ such that $ ab_{1}\cdots b_{m}a' $ is a path in $ B. $ In other words, if $ B\backslash A $ contains at least one path connecting two different connected components of $ A. $
\item[(iii)] $ A\leq B $ if and only if every path in $ B $ between elements of $ A $ is contained in $ A. $
\item[(iv)] $ \intrextw{B} $ if and only if there are paths $ \bar{p}_{1},\ldots,\bar{p}_{k} $ such that $ B=A\bar{p}_{1}\cdots\bar{p}_{k} $ and:
\begin{itemize}
\item[(1)] $ \bar{p}_{1} $ connects different connected components of $ A_{1}:=A. $
\item[(2)] For $ i=2,\ldots, k, $ the path $ \bar{p}_{i} $ connects different connected components of $ A_{i}:=A_{i-1}\bar{p}_{i-1}. $
\end{itemize}
\end{itemize}
\end{corollary}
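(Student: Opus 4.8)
The unifying idea is to translate every predimension condition into the graph-theoretic language supplied by \Cref{lmaGeneralProperties}: for members of $\kplusalpha$ the quantity $\delta$ merely counts connected components (part (ii)), the ambient graphs are forests (part (i)), and strong closedness $C\leq^{*}B$ is equivalent to $B\setminus C$ carrying no edge to $C$ (part (iii)). I would prove the four items in the order (iii), (i), (ii), (iv). First I would dispatch (iii). Since $\delta$ counts components, $A\leq B$ says exactly that no $C$ with $A\subseteq C\subseteq B$ has fewer components than $A$; because adjoining to $A$ a path whose endpoints lie in two distinct components of $A$ merges those components and strictly lowers the count, $A\leq B$ forces that no path of $B$ joins two different components of $A$, and conversely such an absence keeps the components $A_{1},\dots,A_{k}$ of $A$ in pairwise distinct components of every intermediate $C$, giving $\delta(C)\ge k=\delta(A)$. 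Acyclicity then converts ``no cross-component path'' into the stated phrasing: a path between two $A$-vertices lying in the same component of $A$ is the unique $B$-path between them (a second one would close a cycle), so it already lies in $A$, and hence the only paths that can escape $A$ are the cross-component ones.

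For (i) I would argue directly from the criterion in part (iii) of \Cref{lmaGeneralProperties}, assuming $A\subsetneq B$. Applying the intrinsic-extension hypothesis to the proper substructure $A$ itself yields $A\not\leq^{*}B$. For the component condition, if some connected component $K$ of $B\setminus A$ were not attached to $A$, then $K$ has no edge to $A$ and, being a full component of $B\setminus A$, no edge to the rest of $B\setminus A$; hence $K$ is unconnected to $C:=B\setminus K$, so $C\leq^{*}B$ with $A\subseteq C\subsetneq B$, contradicting $\intrext{B}$. Conversely, if $A\not\leq^{*}B$ and every component of $B\setminus A$ is attached to $A$ yet some proper $C\supseteq A$ satisfied $C\leq^{*}B$, I would pick $v\in B\setminus C$ and let $K$ be its component in $B\setminus A$: connectivity of $K$ together with the absence of edges from $B\setminus C$ to $C$ forces $K\subseteq B\setminus C$, whence $K$ has no edge to $A$, contradicting attachment. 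Thus no such $C$ exists and $\intrext{B}$ holds.

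Part (ii) I would read as the assertion that $B$ is obtained from $A$ by adjoining a single path whose only contacts with $A$ are its two endpoints $a,a'$, lying in different components of $A$; acyclicity forbids any chord among the path vertices, and the minimality will forbid extra edges to $A$. Given a weak minimal pair, $A\not\leq B$ produces through the already-proved (iii) a path $p$ in $B$ joining two components of $A$; letting $C$ be $A$ together with the interior vertices of $p$, (iii) gives $A\not\leq C$, so minimality forces $C=B$, and $B\setminus A$ is exactly the interior of $p$, of the asserted shape. Conversely, if $B=A\cup\{b_{1},\dots,b_{m}\}$ with $ab_{1}\cdots b_{m}a'$ such a path, then $A\not\leq B$ by (iii), while any proper $C$ omits some $b_{j}$; deleting $b_{j}$ severs the only route between the two components, so no path of $C$ joins different components of $A$ and $A\leq C$ by (iii), establishing minimality.

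Finally, (iv) follows by iterating (ii). For the forward direction the weak analogue of part (ii) of \Cref{lmaMinimalPairs} decomposes a weak intrinsic extension into a tower $A=A_{1}\subsetneq\cdots\subsetneq A_{k+1}=B$ of weak minimal pairs, and applying (ii) to each step writes $A_{i+1}=A_{i}\bar p_{i}$ with $\bar p_{i}$ a path joining two components of $A_{i}$, which is exactly the claimed description. For the converse I would avoid transitivity and argue directly with (iii): if some proper $C\supseteq A$ were weakly closed in $B$, take the least $i$ with $\bar p_{i}\not\subseteq C$; then $A_{i}\subseteq C$, so the endpoints of $\bar p_{i}$ lie in $C$, and (iii) applied to $C\leq B$ forces all of $\bar p_{i}$ into $C$, a contradiction. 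I expect the genuine friction to sit in (ii): pinning down that the right-hand side really is a single induced path attached to $A$ only at its ends---so that deleting one interior vertex disconnects the two components and no proper substructure can smuggle in an alternative cross-component route---is where acyclicity, in the guise of uniqueness of paths in a forest, must be used with care, and it is this structural rigidity, rather than any predimension computation, that drives the whole corollary.
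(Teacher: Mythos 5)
The paper offers no proof of this corollary --- it is presented as a list of easy consequences of \Cref{lmaGeneralProperties,lmaMinimalPairs} --- so your write-up has to stand on its own. Your treatment of (iii), of (i), of (iv), and of the left-to-right half of (ii) is correct and is surely the intended route: reduce $\delta$ to component counting via part (ii) of \Cref{lmaGeneralProperties}, reduce $\leq^{*}$ to the absence of edges across the cut via part (iii), and use uniqueness of paths in a forest to convert ``no intermediate set loses a component'' into ``no path crosses between components of $A$.''

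The right-to-left direction of (ii), however, has a genuine gap, and in fact the biconditional is false if its right-hand side is read literally. Your step ``deleting $b_{j}$ severs the only route between the two components, so no path of $C$ joins different components of $A$'' conflates the particular pair $(a,a')$ with all pairs of components of $A$: nothing in the stated hypothesis prevents an interior vertex $b_{i}$ from carrying an extra edge into a third component. Concretely, let $A=\{a,a',a''\}$ with no edges and $B=A\cup\{b_{1},b_{2},b_{3}\}$ with edges $ab_{1}$, $b_{1}b_{2}$, $b_{2}b_{3}$, $b_{3}a'$ and additionally $b_{2}a''$. Then $B$ is a tree, so $B\in\kplusomega$, and $ab_{1}b_{2}b_{3}a'$ is a path joining two components of $A$, so the right-hand side of (ii) holds; yet $C=A\cup\{b_{1},b_{2}\}$ is a proper substructure containing $A$ in which the path $ab_{1}b_{2}a''$ joins two components of $A$, so $A\not\leq C$ by your (iii) and $(A,B)$ is not a weakly minimal pair. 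The statement, and your argument, become correct once the right-hand side is strengthened to say that $B$ is the free join of $A$ and the path over $\{a,a'\}$ --- equivalently, that the only edges meeting $B\setminus A$ are the $m+1$ path edges: then after removing $b_{j}$ each surviving segment $b_{1}\cdots b_{j-1}$ and $b_{j+1}\cdots b_{m}$ is attached to $A$ at a single vertex, so no path of $C$ between $A$-vertices can leave $A$, and your appeal to (iii) finishes. You do flag exactly this point as ``the genuine friction'' in your closing remarks, but flagging it is not the same as supplying the missing hypothesis; as written, your converse of (ii) does not go through. Note that your item (iv) is unaffected, since its forward half uses only the forward half of (ii) and its converse is argued directly from (iii).
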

\begin{definition}\label{dfnFreeJoin}
Suppose that $ M_{0}, M_{1}, M_{2}\in\kplusalphabar $ with $ M_{1}\cap M_{2} = M_{0}. $ The structure $ M $ is called the \textit{free join} or \textit{free amalgam of} $ M_{1} $ \textit{and} $ M_{2} $ \textit{over} $ M_{0} $, denoted by $ \freeJoin{M_{1}}{M_{0}}{M_{2}}, $ if the universe of $ M $ is $ M_{1}\cup M_{2} $ and the following holds 
\[ R^{M} = R^{M_{1}}\cup R^{M_{2}}. \]
\end{definition}
It is easy to see that the class $ \langle\kplusalpha,\leq^{*}\rangle $ has the \textit{full amalgamation property}, i.e. for every $ A,B,C\in\kplusalpha $ with $ A=B\cap C $ and $ A\leq^{*}B, $ the free join of $ B $ and $ C $ over $ A $ is in $ \kplusalpha $ and we have that $ C\leq^{*}\freeJoin{B}{A}{C}. $  Hence, for the class $ \kplusalpha $ there exists a unique countable generic structure $ \mathfrak{M}_{\alpha}. $ In fact, the following  properties characterize $ \mathfrak{M}_{\alpha} $ among all countable structures.
\begin{itemize}
\item[(i)] For every $ A\in\kplusalpha,$ there is a $ \leq^{*} $-closed embedding of $ A $ into $ \mathfrak{M}_{\alpha}. $ (\textit{Universality})
\item[(ii)] For every $ A\leq^{*}B\in\kplusalpha $ with $ A\leq^{*}\mathfrak{M}_{\alpha}, $ there is a $ \leq^{*} $-closed embedding of $ B $ over $ A $ in $ \mathfrak{M}_{\alpha}. $ (\textit{Ultra-homogeneity})
\item[(iii)]  $ \mathfrak{M}_{\alpha} $ is the union of a chain of finite structures $ \{A_{i}:i\in\omega\}, $ where for each $ i\in\omega $ we have that $ A_{i}\in\kplusalpha $ and $ A_{i}\leq^{*} A_{i+1}. $ (\textit{Finite closure property}) 
\end{itemize}
\begin{lemma}\label{lmaGenericityFOExpressible}
For each $ m\in\omega, $ there is a formula $ \gamma_{m}^{*}(\bar{x}) $ with $ \card[\bar{x}]=m $ such that for every $ M\in\kplusalphabar $ and $ \bar{a}\in M $ we have the following
\begin{flalign*}
M\models \gamma_{m}^{*}(\bar{a}) \Leftrightarrow \bar{a}\leq^{*}M.
\end{flalign*} 
\end{lemma}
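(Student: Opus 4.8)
The plan is to write down the formula explicitly and then verify the biconditional by reducing $\bar{a}\leq^{*}M$ to a purely local, single-quantifier condition. The crucial observation, which is what makes $\leq^{*}$ elementary here (unlike in a general Hrushovski construction), is the following: by \Cref{lmaGeneralProperties}(iii), closedness $\bar{a}\leq^{*}B$ for $B\in\kplusalpha$ is equivalent to saying that $B\setminus\bar{a}$ is not connected to $\bar{a}$, and in an acyclic graph this is the same as saying that no vertex outside $\bar{a}$ is \emph{adjacent by an edge} to $\bar{a}$. Thus $\bar{a}\leq^{*}M$ should amount to the assertion ``every neighbour of $\bar{a}$ already lies in $\bar{a}$,'' which is visibly first order.

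Accordingly, the first step is to set
\[
\gamma_{m}^{*}(x_{1},\dots,x_{m}):=\forall y\Big(\bigvee_{i=1}^{m}R(x_{i},y)\longrightarrow\bigvee_{j=1}^{m}y=x_{j}\Big),
\]
which asserts exactly that any $y$ adjacent to some coordinate of $\bar{x}$ equals one of the coordinates. Next I would prove the direction $M\models\gamma_{m}^{*}(\bar{a})\Rightarrow\bar{a}\leq^{*}M$. Assume $M\models\gamma_{m}^{*}(\bar{a})$ and take any $B$ with $\bar{a}\subseteq B\subsetfinite M$; since $M\in\kplusalphabar$ we have $B\in\kplusalpha$, and $\gamma_{m}^{*}(\bar{a})$ forces that there is no edge between $B\setminus\bar{a}$ and $\bar{a}$, so $B\setminus\bar{a}$ is not connected to $\bar{a}$. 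By \Cref{lmaGeneralProperties}(iii) this gives $\bar{a}\leq^{*}B$, and as $B$ was an arbitrary finite extension, \Cref{dfnCloseness}(iii) yields $\bar{a}\leq^{*}M$.

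For the converse I would argue contrapositively. If $M\not\models\gamma_{m}^{*}(\bar{a})$, fix a witness $b\in M\setminus\bar{a}$ with $R(a_{i},b)$ for some $i$, and put $B:=\bar{a}\cup\{b\}\in\kplusalpha$. Then $B\setminus\bar{a}=\{b\}$ is connected to $\bar{a}$, so \Cref{lmaGeneralProperties}(iii) gives $\bar{a}\not\leq^{*}B$, and therefore $\bar{a}\not\leq^{*}M$ by \Cref{dfnCloseness}(iii). Combining the two directions establishes the equivalence for every $M\in\kplusalphabar$ and every $\bar{a}\in M$.

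I do not expect a genuine obstacle here: once the equation ``$\leq^{*}$ $=$ no external neighbour'' is isolated, the argument is a direct translation between the syntactic statement $\gamma_{m}^{*}$ and the graph condition. The only point that deserves a line of care is that a \emph{single} offending neighbour already defeats strongness, i.e.\ that ``connected'' in \Cref{lmaGeneralProperties}(iii) may legitimately be read as ``joined by an edge''; this is precisely the computation $\delta(b/\bar{a})\leq 0$ recorded in the proof of that lemma, so no additional work is required.
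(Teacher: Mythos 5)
Your proof is correct and follows essentially the same route as the paper: the formula you write is just the contrapositive of the paper's $\gamma_{m}^{*}$, and both arguments reduce $\bar{a}\leq^{*}M$ to the non-existence of a single external neighbour of $\bar{a}$. The only cosmetic difference is that you justify this reduction via the graph-theoretic characterization in \Cref{lmaGeneralProperties}(iii), whereas the paper invokes \Cref{lmaMinPair} on minimal pairs; these are two phrasings of the same observation.
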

\begin{proof}
By \Cref{lmaMinPair}, every minimal pair over $ \bar{a} $ consists of a single point with at least one relation to $ \bar{a}. $ Hence, being closed in $ M $ is equivalent to non-existence of such a point. Namely, $ \gamma_{m}^{*}(\bar{x}) $ is the following formula
\[ \forall y\Big(\bigwedge_{i=1}^{m}y\neq x_{i}\rightarrow\bigwedge_{i=1}^{m}\neg R(x_{i},y)\Big). \]
\end{proof}
\begin{definition}\label{dfnTUniv}
Let $ \univ_{\alpha} $ be the collection of the sentences asserting that the relation $ R $ defines an acyclic graph together with the axioms of the class $ \kplusalpha $ and the following set of sentences that ensure universality
\[ \Big\{\exists\bar{x}(\diag_{A}(\bar{x})\wedge \gamma_{m}^{*}(\bar{x}))\Big\}_{A\in\kplusalpha}. \]
\end{definition}

\begin{lemma}\label{lmaGenericSatTUniv}
We have the following.
\begin{itemize}
\item[(i)] Every model of $ \univ_{\alpha} $ is ultra-homogeneous. 
\item[(ii)] $ \mathfrak{M}_{\alpha}\models \univ_{\alpha}. $
\end{itemize}
 
\end{lemma}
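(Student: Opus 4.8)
The plan is to treat the two items separately, with (ii) following directly from the characterizing properties of the generic and (i) requiring a small combinatorial trick to pass from "one closed copy" to "a closed copy in a prescribed position."

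For (ii) I would verify each family of sentences in $\univ_{\alpha}$ in turn. Since $\age(\mathfrak{M}_{\alpha})\subseteq\kplusalpha\subseteq\kplusomega$, part (i) of \Cref{lmaGeneralProperties} shows that no finite substructure of $\mathfrak{M}_{\alpha}$ contains a cycle, so $\mathfrak{M}_{\alpha}$ is acyclic and satisfies the graph axioms. The remaining axioms of $\kplusalpha$ are universal first-order sentences (the degree and path conditions), hence they hold in $\mathfrak{M}_{\alpha}$ precisely because they hold throughout $\age(\mathfrak{M}_{\alpha})\subseteq\kplusalpha$. Finally, for the universality sentence associated with $A\in\kplusalpha$, the universality property of $\mathfrak{M}_{\alpha}$ yields a $\leq^{*}$-closed embedding of $A$, i.e.\ a tuple $\bar a$ with $\bar a\cong A$ and $\bar a\leq^{*}\mathfrak{M}_{\alpha}$; by \Cref{lmaGenericityFOExpressible} this tuple satisfies $\gamma_{m}^{*}$ while realizing $\diag_{A}$, so $\mathfrak{M}_{\alpha}\models\exists\bar x(\diag_{A}(\bar x)\wedge\gamma_{m}^{*}(\bar x))$.

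For (i), fix $M\models\univ_{\alpha}$ together with a pair $A\leq^{*}B$ in $\kplusalpha$ and a $\leq^{*}$-closed copy of $A$ inside $M$; I must produce a $\leq^{*}$-closed embedding of $B$ over $A$. By part (iii) of \Cref{lmaGeneralProperties}, $A\leq^{*}B$ means that $C:=B\setminus A$ is not connected to $A$, so $B$ is simply the disjoint union of the graphs $A$ and $C$. Thus the whole task reduces to locating inside $M$ a copy $\bar c$ of $C$ that is disjoint from $A$ and satisfies $\bar c\leq^{*}M$: for such a $\bar c$, the hypothesis $A\leq^{*}M$ guarantees there are no edges between $A$ and $\bar c$, so $A\cup\bar c\cong B$ over $A$, and since both $A$ and $\bar c$ are $\leq^{*}$-closed one checks directly via \Cref{lmaGeneralProperties}(iii) that $A\cup\bar c\leq^{*}M$.

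The heart of the argument, and the step I expect to be the main obstacle, is extracting such a disjoint closed copy from the universality axioms alone, since these assert only the existence of one closed copy rather than infinitely many. I would get around this by applying universality not to $C$ but to the disjoint union $C^{(k)}$ of $k$ copies of $C$ for some $k>|A|$; this structure still lies in $\kplusalpha$ because it remains acyclic and the defining degree and path conditions are local to connected components. The resulting witness $\bar d\leq^{*}M$ splits, as an induced subgraph, into $k$ pairwise non-adjacent copies of $C$, and because $\bar d\leq^{*}M$ each of these copies is a union of connected components of $M$ and is therefore itself $\leq^{*}$-closed. Since $A$ meets at most $|A|<k$ of these copies, a pigeonhole argument produces one copy $\bar c$ disjoint from $A$, which completes the extension and hence the proof of ultra-homogeneity.
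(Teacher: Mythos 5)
Your proof is correct and follows essentially the same route as the paper: part (ii) is deduced from universality of the generic (with \Cref{lmaGenericityFOExpressible} making closedness first-order), and part (i) applies the universality axioms to a free amalgam of more than $\card[][A]$ disjoint copies and extracts, by pigeonhole, a closed copy disjoint from --- hence, since $A\leq^{*}M$, disconnected from --- $A$. The only cosmetic difference is that the paper amalgamates $\card[][A]+1$ copies of $B$ itself rather than of $B\backslash A$.
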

\begin{proof}
(i) Suppose that $ M\models\univ_{\alpha}, A\leq^{*}M $ and $ A\leq^{*}B\in\kplusalpha. $ It is enough to find a copy of $ B $ in $ M $ that is disconnected from $ A. $ Let $ m=\card[A] $ and $ B' $ be the structure that is obtained from $ m+1 $ copies of $ B $ being mutually freely amalgamated over the empty set. Using the axioms of $ \univ_{\alpha}, $ there is a closed embedding of $ B' $ into $ M. $ Hence, there is at least one copy of $ B $ in $ B' $ that is disconnected from $ A. $ Part (ii) follows from universality of $ \mathfrak{M}_{\alpha}. $
\end{proof}

\section{\bf Closure Formulas}\label{secCLF}

A key step in our approach is to introduce the notion of a \textit{closure formula} and to show that in $ \theory(\mathfrak{M}_{\alpha}), $ for each $ \alpha\in\omega+1, $ all formulas are equivalent to closure formulas. From now on, we denote $ \theory(\mathfrak{M}_{\alpha}) $ by $ T_{\alpha}. $
\begin{definition}\label{dfnPClosureformulas}
The set of closure formulas $ \clforms_{\alpha} $ is the least class of $\mathcal{L}$-formulas that is defined inductively as follows.
\begin{itemize}
\item[(i)] For each $ A\in\kplusalpha $ we let $\diag_{A}(\bar{x})\in\clforms_{\alpha}. $ 
\item[(ii)] If $\varphi_{AB}(\bar{x}\bar{y})\in\clforms_{\alpha}, $ and $ \intrext[A]{B}\in\kplusalpha, $ then the formula $ \psi_{A}(\bar{x}) $ that is in the form of  $\exists\bar{y}\Big[\diag_{(A,B)}(\bar{x},\bar{y})\wedge\varphi_{AB}(\bar{x}\bar{y})\Big] $ is in $ \clforms_{\alpha}. $
\item[(iii)] If $\varphi_{AB}(\bar{x}\bar{y})\in\clforms_{\alpha}, $ and $ \intrext[A]{B}\in\kplusalpha, $ then the formula $ \psi_{A}(\bar{x}) $ that is in the form of  $\forall\bar{y}\Big[\diag_{(A,B)}(\bar{x},\bar{y})\rightarrow\varphi_{AB}(\bar{x}\bar{y})\Big] $ is in $ \clforms_{\alpha}. $
\item[(iv)] If $\varphi_{A}, \psi_{B}\in\clforms_{\alpha},$ their Boolean combinations are also in $ \clforms_{\alpha}. $ 
\end{itemize}
\end{definition}
In short, $ \clforms_{\alpha} $ consists of those formulas whose quantifiers are relativized to closures. For more detailed information on closure formulas, we refer the reader to \cite{Vali&Pourmahd-nonACGenerics}.
\begin{definition}\label{dfnClosureType}
For a tuple $ \bar{a}\in M\in\kplusalphabar, $ the \textit{closure type} of $ \bar{a} $ in $ M $ is denoted by $ \cltype^{M}(\bar{a}) $ and is defined as the following
\[ \cltype^{M}(\bar{a}):=\Big\{\varphi(\bar{x})\in\clforms_{\alpha}\Big| M\models\varphi(\bar{a})\Big\}. \]
\end{definition}
\begin{lemma}\label{lmaClImpliesClType}
	Suppose that $ M,M'\in\kplusalphabar $ with $  \bar{a}\in M, \bar{a}'\in M' $ and $ \cl^{*}_{M}(\bar{a})\underset{\hspace*{2.5pt}\bar{a}\mapsto\bar{a}'}{\cong}\cl^{*}_{M'}(\bar{a}'). $ Then we have that
	\[ \cltype^{M}(\bar{a})=\cltype^{M'}(\bar{a}'). \]
\end{lemma}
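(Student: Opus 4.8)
The plan is to prove, by induction on the construction of closure formulas in \Cref{dfnPClosureformulas}, the following strengthened statement: for every $\varphi(\bar{x})\in\clforms_{\alpha}$ and all $M,M'\in\kplusalphabar$, $\bar{a}\in M$, $\bar{a}'\in M'$ with $\cl^{*}_{M}(\bar{a})\cong\cl^{*}_{M'}(\bar{a}')$ via $\bar{a}\mapsto\bar{a}'$, one has $M\models\varphi(\bar{a})$ iff $M'\models\varphi(\bar{a}')$. The lemma is then immediate from \Cref{dfnClosureType}. Keeping $\bar{a}$ universally quantified with arbitrary tuple length in the inductive statement is essential, since the subformulas $\varphi_{AB}(\bar{x}\bar{y})$ in clauses (ii) and (iii) carry extra free variables $\bar{y}$. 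The base case (clause (i)) is that $\diag_{A}(\bar{x})$ records only the quantifier-free type of $\bar{a}$, which is determined by the induced substructure on $\bar{a}\subseteq\cl^{*}_{M}(\bar{a})$ and is therefore preserved by any isomorphism of closures sending $\bar{a}\mapsto\bar{a}'$; the Boolean case (clause (iv)) is routine.

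The heart of the argument is the two quantifier clauses, and the key observation is that relativizing quantifiers to intrinsic extensions confines every witness to the closure. Fix the closure isomorphism $f\colon\cl^{*}_{M}(\bar{a})\to\cl^{*}_{M'}(\bar{a}')$. For clause (ii), with $\varphi(\bar{x})=\exists\bar{y}\big[\diag_{(A,B)}(\bar{x},\bar{y})\wedge\varphi_{AB}(\bar{x}\bar{y})\big]$ and $\intrext[A]{B}$, suppose $M\models\varphi(\bar{a})$ with witness $\bar{b}$. Since $(A,B)$ is an intrinsic extension, \Cref{crlEasyCharac}(i) tells us every connected component of $B\backslash A$ is attached to $A$; read inside the induced copy of $B$ on $\bar{a}\bar{b}$, this produces for each entry of $\bar{b}$ a genuine path to $\bar{a}$ in $M$, so $\bar{b}\subseteq\cl^{*}_{M}(\bar{a})$ by \Cref{lmaGraphsClosurePath}. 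Setting $\bar{b}'=f(\bar{b})$ and using that adjoining closure elements does not enlarge the closure (by \Cref{lmaGraphsClosurePath} and transitivity of connectivity one gets $\cl^{*}_{M}(\bar{a}\bar{b})=\cl^{*}_{M}(\bar{a})$, and likewise on the $M'$ side), the restriction of $f$ is an isomorphism $\cl^{*}_{M}(\bar{a}\bar{b})\cong\cl^{*}_{M'}(\bar{a}'\bar{b}')$ sending $\bar{a}\bar{b}\mapsto\bar{a}'\bar{b}'$. The induction hypothesis applied to $\varphi_{AB}$ then gives $M'\models\varphi_{AB}(\bar{a}'\bar{b}')$, while $f$ preserves $\diag_{(A,B)}$, so $M'\models\varphi(\bar{a}')$; the converse is symmetric.

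Clause (iii) is dual. Given $M\models\varphi(\bar{a})$ for $\varphi(\bar{x})=\forall\bar{y}\big[\diag_{(A,B)}(\bar{x},\bar{y})\rightarrow\varphi_{AB}(\bar{x}\bar{y})\big]$, take any $\bar{b}'$ with $M'\models\diag_{(A,B)}(\bar{a}',\bar{b}')$; the same confinement argument forces $\bar{b}'\subseteq\cl^{*}_{M'}(\bar{a}')$, so $\bar{b}:=f^{-1}(\bar{b}')\subseteq\cl^{*}_{M}(\bar{a})$ satisfies $M\models\diag_{(A,B)}(\bar{a},\bar{b})$ and hence $M\models\varphi_{AB}(\bar{a}\bar{b})$; pulling the closure isomorphism through as before and invoking the induction hypothesis yields $M'\models\varphi_{AB}(\bar{a}'\bar{b}')$, and since $\bar{b}'$ was arbitrary, $M'\models\varphi(\bar{a}')$. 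The main obstacle — and the step where the design of $\clforms_{\alpha}$ is used essentially — is precisely this confinement claim, that every tuple satisfying $\diag_{(A,B)}(\bar{a},\cdot)$ with $\intrext[A]{B}$ lies in $\cl^{*}_{M}(\bar{a})$; once it is in place, the idempotence-type identity $\cl^{*}_{M}(\bar{a}\bar{b})=\cl^{*}_{M}(\bar{a})$ ensures that the \emph{same} closure isomorphism continues to witness the hypotheses of the induction hypothesis for the longer tuple, which is what makes the induction close.
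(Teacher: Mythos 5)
Your proof is correct and follows exactly the route the paper takes: the paper's own proof is the one-line remark ``by induction on the complexity of formulas in $\cltype^{M}(\bar{a})$,'' and your argument is a faithful, fully detailed execution of that induction. The confinement observation (witnesses of $\diag_{(A,B)}$ with $\intrext[A]{B}$ lie in $\cl^{*}_{M}(\bar{a})$ via \Cref{crlEasyCharac} and \Cref{lmaGraphsClosurePath}, so $\cl^{*}_{M}(\bar{a}\bar{b})=\cl^{*}_{M}(\bar{a})$ and the same isomorphism carries through) is precisely the point the paper leaves implicit.
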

\begin{proof}
	By induction on the complexity of formulas in $ \cltype^{M}(\bar{a}). $
\end{proof}
\begin{lemma}\label{lmaGraphsClosureFreejoin}
	Suppose that $ \bar{a},\bar{b}\in M\in\kplusalphabar $ with $ \bar{b}\cap\cl^{*}_{M}(\bar{a})=\varnothing. $ Then
	\begin{itemize}
		\item[(i)] $ \cl^{*}_{M}(\bar{a}\bar{b})=\freeJoin{\cl^{*}_{M}(\bar{a})}{\varnothing}{\cl^{*}_{M}(\bar{b})}. $
		\item[(ii)] $ \cltype^{M}(\bar{a}\bar{b}) $ is determined by $ \cltype^{M}(\bar{a}), \cltype^{M}(\bar{b}) $ and the fact that ``there is no path between $ \bar{b} $ and $ \bar{a} $''.
	\end{itemize}
	
\end{lemma}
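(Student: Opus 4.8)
The plan is to read off everything from the path characterization of the strong closure in \Cref{lmaGraphsClosurePath} together with acyclicity of members of $\kplusalphabar$ (\Cref{lmaGeneralProperties}(i)), and then to deduce (ii) from (i) by induction on the complexity of closure formulas. The hypothesis $\bar{b}\cap\cl^{*}_{M}(\bar{a})=\varnothing$ says, via \Cref{lmaGraphsClosurePath}, that no vertex of $\bar{b}$ is joined to $\bar{a}$ by a path; since $M$ is acyclic this is equivalent to $\bar{a}$ and $\bar{b}$ sharing no connected component.

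For part (i) I would extract three facts from this. First, \emph{disjointness}: if some $c$ lay in both closures it would be path-connected to $\bar{a}$ and to $\bar{b}$, forcing a path from $\bar{b}$ to $\bar{a}$, a contradiction, so $\cl^{*}_{M}(\bar{a})\cap\cl^{*}_{M}(\bar{b})=\varnothing$. Second, the \emph{universe}: by \Cref{lmaGraphsClosurePath}, $c\in\cl^{*}_{M}(\bar{a}\bar{b})$ iff there is a path from $c$ to $\bar{a}$ or to $\bar{b}$, i.e.\ iff $c\in\cl^{*}_{M}(\bar{a})\cup\cl^{*}_{M}(\bar{b})$. Third, \emph{no cross edges}: an edge between $c_{1}\in\cl^{*}_{M}(\bar{a})$ and $c_{2}\in\cl^{*}_{M}(\bar{b})$ would again produce a path from $\bar{a}$ to $\bar{b}$. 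By these three facts and \Cref{dfnFreeJoin}, the substructure induced on $\cl^{*}_{M}(\bar{a}\bar{b})$ is exactly $\freeJoin{\cl^{*}_{M}(\bar{a})}{\varnothing}{\cl^{*}_{M}(\bar{b})}$.

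For part (ii) I would make the informal statement precise as a determination: whenever $(M,\bar{a},\bar{b})$ and $(M',\bar{a}',\bar{b}')$ both satisfy the hypothesis and $\cltype^{M}(\bar{a})=\cltype^{M'}(\bar{a}')$, $\cltype^{M}(\bar{b})=\cltype^{M'}(\bar{b}')$, then $\cltype^{M}(\bar{a}\bar{b})=\cltype^{M'}(\bar{a}'\bar{b}')$. I would prove this by induction on \Cref{dfnPClosureformulas}, treating simultaneously every splitting of the variable block into an $\bar{a}$-part and a $\bar{b}$-part. In the atomic case $\diag_{E}$, the absence of edges between $\bar{a}$ and $\bar{b}$ makes the induced structure on $\bar{a}\bar{b}$ the free join of those on $\bar{a}$ and on $\bar{b}$, so whether $\diag_{E}(\bar{a}\bar{b})$ holds is decided by the diagram formulas true of $\bar{a}$ and of $\bar{b}$ separately, information recorded in the two closure types; Boolean combinations are immediate.

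The heart of the matter, and the step I expect to be the main obstacle, is the existential clause $\exists\bar{y}[\diag_{(C,D)}(\bar{a}\bar{b},\bar{y})\wedge\varphi]$ with $\intrext[C]{D}$ (the universal clause being dual). Any witness $\bar{y}$ is an intrinsic extension of $\bar{a}\bar{b}$, hence by \Cref{lmaGraphsClosurePath} lies in $\cl^{*}_{M}(\bar{a}\bar{b})$, which by (i) equals $\freeJoin{\cl^{*}_{M}(\bar{a})}{\varnothing}{\cl^{*}_{M}(\bar{b})}$; thus $\bar{y}$ partitions into $\bar{y}_{a}=\bar{y}\cap\cl^{*}_{M}(\bar{a})$ and $\bar{y}_{b}=\bar{y}\cap\cl^{*}_{M}(\bar{b})$ with no edges across, and (using \Cref{crlEasyCharac}(i)) $D$ itself decomposes as a free join of an intrinsic extension of $\bar{a}$ and one of $\bar{b}$, while the pair $(\bar{a}\bar{y}_{a},\bar{b}\bar{y}_{b})$ still has no path between its two sides. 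The induction hypothesis then makes the truth of $\varphi$ at $\bar{a}\bar{b}\bar{y}$ a fixed function of $\cltype^{M}(\bar{a}\bar{y}_{a})$ and $\cltype^{M}(\bar{b}\bar{y}_{b})$, so the existential over $\bar{y}$ reduces to asking, on each side independently, which closure types over $\bar{a}$ (resp.\ $\bar{b}$) carrying the prescribed one-sided diagram are realized; for each fixed closure formula this realizability is itself expressed by a formula of the form in \Cref{dfnPClosureformulas}(ii), hence is recorded in $\cltype^{M}(\bar{a})$ (resp.\ $\cltype^{M}(\bar{b})$). The difficulty is precisely this factorization: one must verify that a witnessing extension never needs vertices bridging the two closures (guaranteed by acyclicity and the no-path hypothesis) and that the two resulting one-sided requirements are genuinely independent closure conditions. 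I would deliberately avoid routing (ii) through \Cref{lmaClImpliesClType} and an isomorphism of closures, since $\cl^{*}_{M}(\bar{a})$ may be infinite and equality of closure types need not produce an isomorphism of the full closures; the formula-by-formula induction sidesteps this issue entirely.
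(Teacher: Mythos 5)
Your part (i) is correct and essentially interchangeable with the paper's: you compute the universe of $ \cl^{*}_{M}(\bar{a}\bar{b}) $ directly from the path characterization in \Cref{lmaGraphsClosurePath} and rule out cross edges by acyclicity, whereas the paper argues instead that the free join admits no minimal pair over it and is therefore $ \leq^{*} $-closed in $ M $. Either way the content is the same. For part (ii) you take a genuinely different route: the paper splits the witness $ \bar{c} $ into $ \bar{c}_{1}\bar{c}_{2} $ exactly as you do, but then passes to an $ \aleph_{0} $-saturated elementary extension $ N $ and realizes over $ \bar{a}' $ and $ \bar{b}' $ tuples $ \bar{d}_{1},\bar{d}_{2} $ with the \emph{same full closure types} as $ \bar{a}\bar{c}_{1} $ and $ \bar{b}\bar{c}_{2} $ (this is finitely satisfiable because each finite fragment, prefixed by the diagram and the existential quantifier, is itself a closure formula lying in $ \cltype^{M}(\bar{a})=\cltype^{M}(\bar{a}') $), and only then invokes the induction hypothesis. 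You try to stay inside $ M $ and factor the existential into independent one-sided closure formulas.

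That factorization is where your argument has a gap. Your induction hypothesis says only that the truth of $ \varphi(\bar{a}\bar{b}\bar{y}) $ is \emph{determined by} the (infinite) closure types of $ \bar{a}\bar{y}_{a} $ and $ \bar{b}\bar{y}_{b} $; what you actually use is the much stronger statement that the set of ``good'' pairs of closure types is a \emph{finite} union of rectangles cut out by single closure formulas $ \alpha_{i}(\bar{x}\bar{y}_{a})\wedge\beta_{i}(\bar{z}\bar{y}_{b}) $. Only with such a finite decomposition does ``some good pair is realized'' become a Boolean combination of formulas of the shape in \Cref{dfnPClosureformulas}(ii), and hence something recorded in $ \cltype^{M}(\bar{a}) $ and $ \cltype^{M}(\bar{b}) $. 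Without it, knowing which one-sided closure \emph{formulas} are realized over $ \bar{a}' $ does not determine which one-sided closure \emph{types} are realized in a possibly unsaturated $ M $, and the step ``hence recorded in $ \cltype^{M}(\bar{a}) $'' does not follow. To repair the proof you must either strengthen the induction hypothesis to this Feferman--Vaught-style finite decomposition and prove it along with the rest of the induction (doable, but genuine extra work, since the diagrams $ \diag_{(A,B)} $ range over infinitely many $ B $ of unbounded size), or buy the missing realizations with saturation as the paper does.
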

\begin{proof}
	(i) First note that $ \cl^{*}_{M}(\bar{a})\cap\cl^{*}_{M}(\bar{b})=\varnothing. $ Since otherwise, by \Cref{lmaGraphsClosurePath}, there exists a path from $ \bar{a} $ to $ \bar{b} $ contradicting the fact that $ \bar{b}\cap\cl^{*}_{M}(\bar{a})=\varnothing. $ On the other hand, by \Cref{lmaMinPair}, any minimal pair over $ \freeJoin{\cl^{*}_{M}(\bar{a})}{\varnothing}{\cl^{*}_{M}(\bar{b})} $ is a singleton that is connected to $ \cl^{*}_{M}(\bar{a}) $ or $ \cl^{*}_{M}(\bar{b}) $ or to the both. But this contradicts the fact that both $ \cl^{*}_{M}(\bar{a}) $ and $ \cl^{*}_{M}(\bar{b}) $ are closed in $ M. $ Hence $ \freeJoin{\cl^{*}_{M}(\bar{a})}{\varnothing}{\cl^{*}_{M}(\bar{b})}\leq^{*}M $ which completes the proof. 
	
	(ii) This is proved by induction on the complexity of closure formulas. The cases of quantifier free formulas and Boolean combinations are easy to verify and the case of universal formulas follows from the Boolean and the existential cases.
	
	For the existential quantifier, suppose that there are tuples $ \bar{a},\bar{b},\bar{a}' $ and $ \bar{b}' $ with $ \cltype^{M}(\bar{a})=\cltype^{M}(\bar{a}'), \cltype^{M}(\bar{b})=\cltype^{M}(\bar{b}') $ and with no path connecting $ \bar{a} $ to $ \bar{b} $ or $ \bar{a}' $ to $ \bar{b}'. $ Moreover, by part (i), we have that $ \cl^{*}_{M}(\bar{a}\bar{b})=\freeJoin{\cl^{*}_{M}(\bar{a})}{\varnothing}{\cl^{*}_{M}(\bar{b})} $ and $ \cl^{*}_{M}(\bar{a}'\bar{b}')=\freeJoin{\cl^{*}_{M}(\bar{a}')}{\varnothing}{\cl^{*}_{M}(\bar{b}')}. $ If there is $ \bar{c}\in\cl^{*}_{M}(\bar{a}\bar{b}) $ satisfying $ \varphi(\bar{a}\bar{b},\bar{c}) $ (of complexity at most $ m $), then $ \bar{c} $ can be partitioned into tuples $ \bar{c}_{1} $ and $ \bar{c}_{2} $ in such a way that $ \cl^{*}_{M}(\bar{a}\bar{c}_{1}\bar{b}\bar{c}_{2})=\freeJoin{\cl^{*}_{M}(\bar{a}\bar{c}_{1})}{\varnothing}{\cl^{*}_{M}(\bar{b}\bar{c}_{2})}. $ Working in an $ \aleph_{0} $-saturated elementary extension $ N $ of $ M, $ since $ \bar{a}' $ and $ \bar{b}' $ respectively have the same closure types as $ \bar{a} $ and $ \bar{b}, $ one can find tuples $ \bar{d}_{1} $ and $ \bar{d}_{2} $ with the same properties as $ \bar{c}_{1} $ and $ \bar{c}_{2} $ and in such a way that $ \cltype^{N}(\bar{a}\bar{c}_{1})=\cltype^{N}(\bar{a}'\bar{d}_{1}) $ and $ \cltype^{N}(\bar{b}\bar{c}_{2})=\cltype^{N}(\bar{b}'\bar{d}_{2}). $ Hence, by applying induction hypothesis inside $ N, $ we have that $ N\models\varphi(\bar{a}'\bar{b}',\bar{d}). $ Therefore, $ M\models\exists\bar{y}\varphi(\bar{a}'\bar{b}',\bar{y}). $
\end{proof}

\begin{lemma}\label{lmaDifferenceinClosure} 
Suppose that $ A, B\subseteq M\in\kplusalphabar $ and $ b\in M. $ If $ b\in\cl^{*}_{M}(AB)\backslash\cl^{*}_{M}(B), $ then $ b\in\cl^{*}_{M}(A). $
\end{lemma}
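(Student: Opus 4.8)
The plan is to reduce everything to the graph-theoretic characterization of closure supplied by \Cref{lmaGraphsClosurePath}, under which membership in a closure is equivalent to being joined by a path. First I would record the notational point that, by the convention fixed in \Cref{secPrelim}, the symbol $ AB $ denotes the substructure induced on $ A\cup B, $ so that $ \cl^{*}_{M}(AB)=\cl^{*}_{M}(A\cup B). $ Thus all three hypotheses can be rephrased purely in terms of the existence of paths from $ b $: the element $ b $ is joined by a path to $ A\cup B, $ but $ b $ is not joined by any path to $ B. $

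Next I would unpack the assumption $ b\in\cl^{*}_{M}(AB). $ By \Cref{lmaGraphsClosurePath} there is a path $ \bar{p} $ from $ b $ to $ A\cup B, $ that is, a path joining $ b $ to some vertex $ v\in A\cup B. $ The entire argument then turns on a single case distinction, namely whether $ v $ lies in $ B $ or not.

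The crucial step is to exclude the possibility $ v\in B. $ If $ v\in B, $ then $ \bar{p} $ is already a path from $ b $ to $ B, $ so \Cref{lmaGraphsClosurePath} would force $ b\in\cl^{*}_{M}(B), $ contradicting the hypothesis $ b\notin\cl^{*}_{M}(B). $ Hence $ v\in(A\cup B)\setminus B\subseteq A, $ and the very same path $ \bar{p} $ now witnesses, again by \Cref{lmaGraphsClosurePath}, that $ b\in\cl^{*}_{M}(A), $ which is what we want.

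I do not anticipate a genuine obstacle here: once the closure operator has been identified with path-connectivity via \Cref{lmaGraphsClosurePath}, the statement becomes the elementary observation that a path reaching $ A\cup B $ must terminate in either $ A $ or $ B, $ while terminating in $ B $ is ruled out by hypothesis. The only point requiring mild care is to read ``path to $ A\cup B $'' as ``path to some vertex of $ A\cup B, $'' so that the terminal vertex $ v $ can be classified; with that reading, the proof is the single case split above.
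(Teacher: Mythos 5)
Your proof is correct and follows exactly the route the paper takes: the paper's own proof consists of the single line ``Follows immediately from \Cref{lmaGraphsClosurePath},'' and your case split on whether the terminal vertex of the path lies in $B$ or in $A$ is precisely the immediate argument being invoked. No further comparison is needed.
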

\begin{proof}
Follows immediately from \Cref{lmaGraphsClosurePath}.
\end{proof}
\begin{lemma}\label{lmaFiniteStructure}
For every closure formula $ \varphi_{A}(\bar{x}) $ that is consistent with $ T_{\alpha}, $ there exists a finite structure $ B_{\varphi} $ with $ \intrext[A]{B_{\varphi}} $ such that for every $ M \models T_{\alpha}, $ with $ A\cong\bar{a}\subseteq M, $ if there is a closed embedding of $ B_{\varphi} $ into $ M $ over $ \bar{a}, $ we have that $ M\models\varphi_{A}(\bar{a}). $
\end{lemma}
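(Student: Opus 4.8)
The plan is to realize $\varphi_A$ inside the generic model $\mathfrak{M}_{\alpha}$ and then take for $B_\varphi$ the closure of a realizing tuple, which is automatically finite. The whole point is that a closed copy of an intrinsic extension pins down the entire closure, and hence, by \Cref{lmaClImpliesClType}, the closure type.

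First I would record the rigidity built into the hypothesis. Suppose $\intrext[A]{B_\varphi}$ and that some copy of $B_\varphi$ over $\bar{a}$ is closed in $M$. Since $\intrext[A]{B_\varphi}$, every vertex of that copy is joined to $\bar{a}$ by a path, so by \Cref{lmaGraphsClosurePath} the copy lies inside $\cl^{*}_{M}(\bar{a})$; conversely, any closed subset of $M$ containing $\bar{a}$ must contain $\cl^{*}_{M}(\bar{a})$, because a path leaving it would join the closed set to its complement. Hence the copy equals $\cl^{*}_{M}(\bar{a})$ exactly. Thus producing $B_\varphi$ amounts to exhibiting a single intrinsic extension of $A$ that arises as a closure and over which $\varphi_A$ holds.

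Now I would realize $\varphi_A$. Since it is consistent with $T_\alpha=\theory(\mathfrak{M}_{\alpha})$, the sentence $\exists\bar{x}\,\varphi_A(\bar{x})$ lies in $T_\alpha$, so there is a tuple $\bar{a}\cong A$ in $\mathfrak{M}_{\alpha}$ with $\mathfrak{M}_{\alpha}\models\varphi_A(\bar{a})$, and I set $B_\varphi:=\cl^{*}_{\mathfrak{M}_{\alpha}}(\bar{a})$. Two observations make this legitimate. By the finite closure property of $\mathfrak{M}_{\alpha}$, writing $\mathfrak{M}_{\alpha}=\bigcup_{i} A_i$ with $A_i\leq^{*}A_{i+1}$, transitivity of $\leq^{*}$ gives $A_i\leq^{*}\mathfrak{M}_{\alpha}$ for each $i$; choosing $i$ with $\bar{a}\subseteq A_i$ yields $\cl^{*}_{\mathfrak{M}_{\alpha}}(\bar{a})\subseteq A_i$, so $B_\varphi$ is finite. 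Moreover $B_\varphi$ is an intrinsic, not merely weak, extension of $\bar{a}$: each of its elements is attached to $\bar{a}$ along a path (\Cref{lmaGraphsClosurePath}, \Cref{crlEasyCharac}(i)), so $\intrext[A]{B_\varphi}$.

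Finally I would close the loop. Given $M\models T_\alpha$, a tuple $\bar{a}'\cong A$, and a closed embedding $f$ of $B_\varphi$ over $\bar{a}'$, the rigidity observation forces $\cl^{*}_{M}(\bar{a}')=f(B_\varphi)\cong B_\varphi=\cl^{*}_{\mathfrak{M}_{\alpha}}(\bar{a})$, matching $\bar{a}'$ with $\bar{a}$. By \Cref{lmaClImpliesClType} the two tuples have the same closure type, and since $\varphi_A\in\cltype^{\mathfrak{M}_{\alpha}}(\bar{a})$ we get $M\models\varphi_A(\bar{a}')$. I expect the only delicate points to be bookkeeping: checking that the realizing tuple can be taken with $\bar{a}\cong A$ (conjoining $\diag_A$ with $\varphi_A$ when the universal clause of the definition does not already force the isomorphism type), and confirming that $\cl^{*}_{\mathfrak{M}_{\alpha}}(\bar{a})$ is genuinely an intrinsic extension. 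Both are immediate from the graph-theoretic description of closures in \Cref{lmaGraphsClosurePath} and \Cref{crlEasyCharac}. The conceptual crux is not any estimate but the recognition that in the generic model every closure is finite, so the witness $B_\varphi$ can simply be read off as such a closure.
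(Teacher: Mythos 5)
Your proposal is correct and follows essentially the same route as the paper: realize $\varphi_A$ at some $\bar{a}_0\cong A$ in $\mathfrak{M}_{\alpha}$, set $B_\varphi:=\cl^{*}_{\mathfrak{M}_{\alpha}}(\bar{a}_0)$ (finite and an intrinsic extension of $A$), observe that any closed embedding of $B_\varphi$ over $\bar{a}$ forces $\cl^{*}_{M}(\bar{a})\cong_A B_\varphi$, and conclude via \Cref{lmaClImpliesClType}. Your explicit ``rigidity'' step is just an unpacking of the paper's assertion that the embedded copy coincides with $\cl^{*}_{M}(\bar{a})$, so there is no substantive difference.
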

\begin{proof}
Since $ \varphi_{A}(\bar{x}) $ is consistent with $ T_{\alpha}, $ there exists $ \bar{a}_{0}\subsetfinite \mathfrak{M}_{\alpha} $ with $ \bar{a}_{0}\cong A $ and $ \mathfrak{M}_{\alpha}\models\varphi(\bar{a}_{0}). $ Let $ B_{\varphi}=\cl^{*}_{\mathfrak{M}_{\alpha}}(\bar{a}_{0}). $ By an induction on the complexity of closure formulas, one can show that $ B_{\varphi}\models\varphi(\bar{a}_{0}). $ On the other hand, for any $ \bar{a}\subsetfinite M\models T_{\alpha} $ with $ \bar{a}\cong A, $ and a closed embedding of $ B_{\varphi} $ into $ M, $ we have that $ \cl^{*}_{M}(\bar{a})\cong_{A}B_{\varphi}. $ Hence, by \Cref{lmaClImpliesClType}, $ M\models\varphi_{A}(\bar{a}). $
\end{proof}

\begin{theorem}\label{theoremQE}
$ T_{\alpha} $ admits quantifier elimination down to closure formulas. More precisely, in $ T_{\alpha} $ every formula is equivalent to a closure formula.
\end{theorem}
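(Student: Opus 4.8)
The plan is to invoke the usual separation criterion for relative quantifier elimination. The class $\clforms_\alpha$ is closed under Boolean combinations by \Cref{dfnPClosureformulas}(iv), and modulo $T_\alpha$ every quantifier-free formula is a finite disjunction of diagrams $\diag_A$ with $A\in\kplusalpha$ (in a model of $T_\alpha$ the structure induced on any tuple lies in $\kplusalpha$, so the atomic diagram is always of this shape), whence $\clforms_\alpha$ contains the quantifier-free formulas up to $T_\alpha$-equivalence. It therefore suffices to establish the single \emph{core claim}: for $M,M'\models T_\alpha$ and tuples $\bar a\in M$, $\bar a'\in M'$,
\[
\cltype^{M}(\bar a)=\cltype^{M'}(\bar a')\ \Longrightarrow\ \type^{M}(\bar a)=\type^{M'}(\bar a').
\]
Granting the core claim, I would finish by compactness. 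Given a formula $\phi(\bar x)$, let $\Gamma(\bar x)$ be the set of closure formulas $\psi$ with $T_\alpha\models\forall\bar x(\phi\to\psi)$. If some $N\models T_\alpha$ had $N\models\Gamma(\bar b)\wedge\neg\phi(\bar b)$, then the complete closure type $\cltype^{N}(\bar b)$ would still be consistent with $\phi$ over $T_\alpha$: otherwise compactness would produce finitely many $\psi_i\in\cltype^{N}(\bar b)$ with $T_\alpha\models\forall\bar x(\phi\to\bigvee_i\neg\psi_i)$, placing the closure formula $\bigvee_i\neg\psi_i$ in $\Gamma$ and contradicting $N\models\bigwedge_i\psi_i(\bar b)$. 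Realizing $\cltype^N(\bar b)\cup\{\phi\}$ by some $\bar a$ in a model of $T_\alpha$ and applying the core claim (closure types are complete, so the two tuples share their closure type) would give $N\models\phi(\bar b)$, a contradiction. Thus $T_\alpha\cup\Gamma(\bar x)\models\phi(\bar x)$, and compactness together with closure of $\clforms_\alpha$ under conjunction yields a closure formula equivalent to $\phi$ modulo $T_\alpha$.

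To prove the core claim I would run a back-and-forth argument. Every model of $T_\alpha$ is ultra-homogeneous, since $\mathfrak M_\alpha\models\univ_\alpha$ forces $\univ_\alpha\subseteq T_\alpha$ and \Cref{lmaGenericSatTUniv}(i) applies; passing to $\aleph_0$-saturated elementary extensions preserves both closure types and complete types, so I may assume $M,M'$ are $\aleph_0$-saturated. I then claim that the family of finite partial maps $\bar a\mapsto\bar a'$ with equal closure types is a back-and-forth system, which forces $\bar a\equiv\bar a'$. For the extension step, given such a pair and $c\in M$, I distinguish two cases via \Cref{lmaGraphsClosurePath}. If there is no path from $c$ to $\bar a$, then \Cref{lmaGraphsClosureFreejoin} exhibits $\cl^*_M(\bar ac)$ as $\freeJoin{\cl^*_M(\bar a)}{\varnothing}{\cl^*_M(c)}$ and shows $\cltype^M(\bar ac)$ is determined by $\cltype^M(\bar a)$, $\cltype^M(c)$ and the absence of a connecting path; realizing a disconnected copy exactly as in the proof of \Cref{lmaGenericSatTUniv}(i) produces $c'$ with $\cltype^{M'}(c')=\cltype^M(c)$ and no path to $\bar a'$, giving $\cltype^{M'}(\bar a'c')=\cltype^M(\bar ac)$. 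If instead $c\in\cl^*_M(\bar a)$, then $c$ lies inside a finite intrinsic sub-extension of $\bar a$ whose diagram is asserted by an existential closure formula true of $\bar a$ (using the singleton description of minimal pairs in \Cref{lmaMinPair} and the finite witness of \Cref{lmaFiniteStructure}); the uniqueness of the connecting path from \Cref{lmaGraphsClosureUniquePath} then pins down the matching point $c'$ in the corresponding copy sitting inside $\cl^*_{M'}(\bar a')$.

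The hard part will be the extension step when $c\in\cl^*_M(\bar a)$, and the difficulty is that strong closures in $\kplusalphabar$ need not be finite: an infinite path is its own closure. Consequently the closure type does not literally fix the isomorphism type of the whole closure, and one cannot simply quote \Cref{lmaClImpliesClType}; instead I must localise to the finite intrinsic piece actually containing $c$, check that the closure type already guarantees this finite configuration together with the unique way it is glued to $\bar a$, and then transfer it into $M'$ using $\aleph_0$-saturation. Verifying that these finite matchings assemble into a genuine back-and-forth system, rather than matching single points in isolation, is where the real work lies; the free-join bookkeeping of \Cref{lmaGraphsClosureFreejoin} is precisely what lets the local matches compose globally.
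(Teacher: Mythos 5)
Your proposal follows essentially the same route as the paper: a back-and-forth system on tuples with equal closure types in an $\aleph_0$-saturated model, splitting the extension step according to whether the new element lies in $\cl^{*}_{M}(\bar a)$, handling the disconnected case via \Cref{lmaGraphsClosureFreejoin}, \Cref{lmaFiniteStructure} and ultra-homogeneity, and the connected case by applying $\aleph_0$-saturation to the finite intrinsic piece (the connecting path) containing the new element. The explicit compactness/separation argument you place at the front is standard boilerplate that the paper leaves implicit in ``this leads to the desired quantifier elimination,'' so it does not constitute a genuinely different approach.
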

\begin{proof}
Suppose that $ M $ is an $ \aleph_{0} $-saturated model of $ T_{\alpha}. $ We show that the following set defines a back-and-forth system inside $ M; $ this leads to the desired quantifier elimination.
\begin{align*}
  \mathcal{I}:= \Biggl\{(\bar{a},\bar{a}')\;\Bigg|\; \bar{a},\bar{a}'\in M,\card[\bar{a}]=\card[\bar{a}']\neq 0, \cltype^{M}(\bar{a})=\cltype^{M}(\bar{a}')\Biggr\}. \\
\end{align*}

Note that, based on \Cref{dfnPClosureformulas,dfnClosureType}, equivalence of closure types implies the equivalence of quantifier free types.
	
Now, suppose that $ \cltype^{M}(\bar{a})=\cltype^{M}(\bar{a}'), $ and $ b\in M. $ According to \Cref{crlWeakClFinite}, the weak closures of $ \bar{a} $ and $ \bar{a}' $ are finite. Hence, the fact that the closure types of $ \bar{a} $ and $ \bar{a}' $ are identical implies that $ \cl_{M}(\bar{a})\cong_{\bar{a}\mapsto\bar{a}'}\cl_{M}(\bar{a}'). $ This fact also implies that $ \cltype^{M}(\cl_{M}(\bar{a}))=\cltype^{M}(\cl_{M}(\bar{a}')). $ Therefore, we can assume that $ \bar{a} $ and $ \bar{a}' $ are weakly closed in $ M. $
	
If $ b\in\cl^{*}_{M}(\bar{a}), $ finding a suitable $ b' $ over $ \bar{a}' $ is guaranteed by $ \aleph_{0} $-saturation of $ M. $ 
	
Hence, we suppose that $ b\not\in\cl^{*}_{M}(\bar{a}). $ Using $ \aleph_{0} $-saturation of $ M $ and part (ii) of \Cref{lmaGraphsClosureFreejoin}, in order to find an element $ b' $ over $ \bar{a}' $ with $ \cltype^{M}(\bar{a}'b')=\cltype^{M}(\bar{a}b), $ we only need to find an element $ b' $ that realizes a given $ \varphi(y)\in\cltype^{M}(b) $ without having a path connecting it to $ \bar{a}'. $  

By \Cref{lmaFiniteStructure}, there exists a finite $ B_{\varphi} $ with $ \intrext[b]{B_{\varphi}} $ such that every closed embedding of $ B_{\varphi} $ into $ M $ guarantees that $ M\models\exists y\varphi(y). $ By \Cref{lmaGenericSatTUniv}, we have that $ M $ is ultra-homogeneous. Therefore there exists such a closed embedding $ f:B_{\varphi}\longrightarrow M. $ Moreover, by \Cref{crlGraphsClosurePathNew}, we have that $ fB_{\varphi} $ does not intersect $ \cl^{*}_{M}(\bar{a}') $ which means that there is no path between $ fb $ and $ \bar{a}'. $ This completes the proof.
\end{proof}
\begin{corollary}\label{crlQE}
In a sufficiently saturated model of $ T_{\alpha}, $ for every small set $ A $ and tuples $ \bar{a} $ and $ \bar{a}' $ we have that 
$ \bar{a}\equiv_{A}\bar{a}' $ if and only if
\[ \cl^{*}(\bar{a}A)\underset{\bar{a}\mapsto\bar{a}'}{\cong_{A}}\cl^{*}(\bar{a}'A). \]
\end{corollary}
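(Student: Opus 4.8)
The plan is to derive both directions from the quantifier elimination of \Cref{theoremQE}, the path description of closures in \Cref{lmaGraphsClosurePath}, and \Cref{lmaClImpliesClType}. Fix a sufficiently saturated $ M\models T_{\alpha} $ and abbreviate $ \cl^*_M $ by $ \cl^*. $ I will use two standard facts throughout. First, since every closure is $ \leq^* $-closed in $ M, $ \Cref{lmaGraphsClosurePath} makes closures absolute: whenever $ S\subseteq D $ with $ D\leq^* M, $ no vertex outside $ D $ is joined to $ D $ by a path, so $ \cl^*_M(S)=\cl^*_D(S)\subseteq D. $ Second, $ \bar{a}\equiv_A\bar{a}' $ holds if and only if $ \bar{a}\bar{c}\equiv\bar{a}'\bar{c} $ for every finite $ \bar{c}\subseteq A, $ and by \Cref{theoremQE} the latter is equivalent to $ \cltype^M(\bar{a}\bar{c})=\cltype^M(\bar{a}'\bar{c}). $

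For the forward implication, assume $ \bar{a}\equiv_A\bar{a}'. $ By homogeneity of the sufficiently saturated model there is an automorphism $ \sigma $ of $ M $ fixing $ A $ pointwise with $ \sigma(\bar{a})=\bar{a}'. $ As $ \sigma $ preserves the edge relation it preserves paths, so by \Cref{lmaGraphsClosurePath} it commutes with $ \cl^*; $ hence $ \sigma $ carries $ \cl^*(\bar{a}A) $ onto $ \cl^*(\sigma(\bar{a})\sigma(A))=\cl^*(\bar{a}'A). $ Since $ A\subseteq\cl^*(\bar{a}A) $ is fixed pointwise and $ \bar{a}\mapsto\bar{a}', $ the restriction of $ \sigma $ is the required isomorphism.

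For the converse, let $ f\colon\cl^*(\bar{a}A)\to\cl^*(\bar{a}'A) $ be an isomorphism fixing $ A $ pointwise with $ f(\bar{a})=\bar{a}'. $ Fix a finite $ \bar{c}\subseteq A. $ By monotonicity $ \cl^*(\bar{a}\bar{c})\subseteq\cl^*(\bar{a}A), $ and by absoluteness this set is computed inside the $ \leq^* $-substructure $ \cl^*(\bar{a}A); $ since $ f $ fixes $ \bar{c} $ and sends $ \bar{a} $ to $ \bar{a}', $ \Cref{lmaGraphsClosurePath} gives $ f(\cl^*(\bar{a}\bar{c}))=\cl^*(\bar{a}'\bar{c}). $ Thus $ f $ restricts to an isomorphism $ \cl^*(\bar{a}\bar{c})\cong_{\bar{a}\bar{c}\mapsto\bar{a}'\bar{c}}\cl^*(\bar{a}'\bar{c}), $ so \Cref{lmaClImpliesClType} yields $ \cltype^M(\bar{a}\bar{c})=\cltype^M(\bar{a}'\bar{c}), $ and \Cref{theoremQE} upgrades this to $ \bar{a}\bar{c}\equiv\bar{a}'\bar{c}. $ As $ \bar{c} $ ranged over all finite subtuples of $ A, $ we conclude $ \bar{a}\equiv_A\bar{a}'. $

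I expect the only real work to lie in the converse: checking that the global isomorphism $ f $ restricts to the closures of the finite subtuples $ \bar{a}\bar{c}. $ This is exactly where the path characterization of $ \cl^* $ (\Cref{lmaGraphsClosurePath}) must be combined with the absoluteness of closure inside a $ \leq^* $-substructure, so that restricting $ f $ does not leak vertices across the boundary of $ \cl^*(\bar{a}\bar{c}). $ The passage from the possibly infinite parameter set $ A $ to finite subtuples $ \bar{c} $ is what lets the tuple-level results \Cref{lmaClImpliesClType} and \Cref{theoremQE} be applied uniformly.
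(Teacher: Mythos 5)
Your proof is correct and follows the route the paper intends: the corollary is stated without an explicit proof as an immediate consequence of \Cref{theoremQE}, and your argument (reducing to closure types of finite subtuples $\bar{a}\bar{c}$ via \Cref{lmaClImpliesClType} and the path characterization of \Cref{lmaGraphsClosurePath}, with an automorphism of the strongly homogeneous model supplying the forward direction) is exactly the natural unpacking of that derivation. The key point you rightly isolate, that the global isomorphism restricts to $\cl^{*}(\bar{a}\bar{c})$ because closures are unions of connected components and hence absolute in the $\leq^{*}$-closed substructure, is handled correctly.
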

\section{\bf Superstability}\label{secForking}

In this section, for each $ \alpha\in\omega+1 $ we show that $ \theory(\mathfrak{M}_{\alpha}) $ is strictly superstable. Furthermore, we show that $ \theory(\mathfrak{M}_{\omega}) $ is not 1-based of U-rank $ \omega, $ while $ \theory(\mathfrak{M}_{n}) $ is trivial of U-rank $ n+1 $ for $ n\in\omega. $

We fix a monster model for $ T_{\alpha} $ and denote it by $ \mathbb{M}_{\alpha}. $  Finite tuples in $ \mathbb{M}_{\alpha} $ are shown by $ \bar{a},\bar{b},\ldots, $ and $ A,B,\ldots $ are small subsets of $ \mathbb{M}_{\alpha}. $ For better readability, we drop the subscript $ \mathbb{M}_{\alpha} $ from all notations, hence, for example we use $ \dMe(A) $ and $ \cl^{*}(A) $ instead of $ \dMe_{\mathbb{M}_{\alpha}}(A) $ and $ \cl^{*}_{\mathbb{M}_{\alpha}}(A). $ 

Superstability for these structures, can be obtained directly by proving a uniqueness property for d-independence (\Cref{dfnDIndependence}). But some more general results on \textit{ultraflat} graphs imply superstability very easily. We recall some definitions and facts that can be found in \cite{Podewski&Ziegler-StableGraphs}, \cite{Herre-SuperstableGraphs} and \cite{Ivanov-SuperflatGraphs}.

\begin{definition}\label{dfnComplGraphDivided}
For $ r,m\geq 2, $ by $ \mathcal{C}^{r}_{m} $ we denote the class of all graphs that are obtained from the complete graph $ K_{m} $ by dividing each edge with at most $ r $ new vertices.
\end{definition}
\begin{definition}\label{dfnUltraflat}
A graph $ G $ is called \textit{ultraflat} if there exists some $ m $ such that for every $ r, $ the graph $ G $ does not contain a subgraph that is isomorphic to a member of $ \mathcal{C}^{r}_{m}. $ Here, notice the difference between the notion of a subgraph and that of an \textit{induced} subgraph.
\end{definition}
\begin{fact}(Theorem 1 of \cite{Herre-SuperstableGraphs})\label{factSuperStableGraphs}
If a graph $ G $ is ultraflat, then it is superstable.
\end{fact}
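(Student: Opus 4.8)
The plan is to obtain superstability of $\mathrm{Th}(G)$ from two separate properties: that the theory is \emph{stable}, and that forking has the finite-basis property $\kappa(T)=\aleph_0$; since a stable theory with $\kappa(T)=\aleph_0$ is by definition superstable, this suffices. The only combinatorial input will be a reformulation of ultraflatness. Fixing the witnessing $m$ and letting $r\to\infty$, the requirement that $G$ omit every member of $\mathcal{C}^{r}_{m}$ for all $r$ says exactly that $G$ contains no subdivision of $K_m$ as a subgraph, i.e. $G$ excludes $K_m$ as a topological minor. By the classical Mader--Bollob\'as--Thomason bound this forces every subgraph of $G$ to have linearly many edges, hence bounded degeneracy; in particular, for each length $\ell$ there is a uniform bound, depending only on $m$ and $\ell$, on the number of internally disjoint paths of length at most $\ell$ joining any two fixed vertices.

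For stability I would rule out the order property. Since the language is a single binary relation, Gaifman's locality theorem equates every $\varphi(\bar x,\bar y)$ with a Boolean combination of basic local sentences and a single $r$-local formula, for some $r=r(\varphi)$; hence the truth of $\varphi(\bar a,\bar b)$ depends only on the isomorphism type of the radius-$r$ ball around $\bar a\bar b$, the sentence-level counts being fixed throughout the monster model. If $\varphi$ had the order property, witnessed by $(\bar a_i,\bar b_j)$ with $\varphi(\bar a_i,\bar b_j)\Leftrightarrow i\le j$, the alternation across the diagonal could not come from the fixed sentences, so it must be detected inside these balls; this forces paths of length at most $2r$ between the $\bar a_i$ and the $\bar b_j$ whose presence changes across the diagonal. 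Thinning by Ramsey's theorem to homogenize the ball types and the connection pattern yields arbitrarily many vertices pairwise joined by internally disjoint short paths, i.e. a subdivision of $K_m$, contradicting ultraflatness. Thus no formula has the order property and $\mathrm{Th}(G)$ is stable; this recovers the stability of nowhere dense graphs \cite{Podewski&Ziegler-StableGraphs,Adler-nowherdense}.

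For superstability, working in the monster model and using stability, it is enough to show that every $1$-type does not fork over a finite subset of its domain. By locality, $\mathrm{tp}(a/A)$ is determined, at each radius $r$, by the way the radius-$r$ ball about $a$ attaches to $A$, and the crux is that this attachment is \emph{finitely supported}. The plan is to record the finitely many ``contact points'' of $A$ through which the short paths from $a$ into $A$ must factor, and to show, via Menger's theorem together with the uniform path bound above, that only finitely many are relevant: were there infinitely many essentially distinct short-path attachments, one could again assemble a subdivision of $K_m$. One then verifies that the resulting finite set $C\subseteq A$ is a base, namely that $\mathrm{tp}(a/A)$ is the nonforking extension of $\mathrm{tp}(a/C)$, which gives $\kappa(T)=\aleph_0$ and hence superstability (cf. \cite{Herre-SuperstableGraphs,Ivanov-SuperflatGraphs}).

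I expect the last step to be the genuine obstacle. Naive graph separation \emph{within} $A$ is not the right notion of independence: in the ultraflat graph $K_{2,\infty}$ a new leaf cannot be separated from the infinitely many old leaves by deleting finitely many of them, yet its type is finitely based, because all of its connections factor through two ``bottleneck'' vertices that need not lie in $A$ at all. The delicate point is therefore to define the attachment data so that it records only the genuinely constraining elements of $A$ and correctly absorbs such bottlenecks, and then to prove from the exclusion of $K_m$ as a topological minor that, for a fixed type, these are finite in number. Making this finiteness precise is exactly the content of Herre's theorem.
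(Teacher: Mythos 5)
First, note that the paper does not prove this statement at all: it is imported verbatim as a black-box Fact from Herre's paper, so there is no internal proof to compare yours against, and the intended ``proof'' here is a citation. Judged on its own terms, your plan has the right top-level shape (stability plus $\kappa(T)=\aleph_{0}$, and the correct observation that ultraflatness is exactly the exclusion of $K_{m}$ as a topological minor for some fixed $m$), but the combinatorial core is broken. The lemma you extract from Mader--Bollob\'as--Thomason --- a uniform bound, in terms of $m$ and $\ell$, on the number of internally disjoint paths of length at most $\ell$ between \emph{two} fixed vertices --- is false: $K_{2,N}$ is planar, hence excludes $K_{4}$ (and $K_{5}$) as a topological minor and has degeneracy $2$, yet its two hubs are joined by $N$ internally disjoint paths of length $2$. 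Indeed your own closing example $K_{2,\infty}$ refutes this lemma. What exclusion of a topological $K_{m}$ actually forbids is $m$ vertices \emph{pairwise} joined by a system of internally disjoint short paths; bounding the two-vertex count is simply not a consequence, and everything you build on it (Menger plus ``finitely many contact points'') inherits the problem.

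The other two steps also have real holes. In the stability argument, Ramsey-thinning an order-property array to pairs at bounded distance does not ``yield arbitrarily many vertices pairwise joined by internally disjoint short paths'': the star $K_{1,\infty}$ has infinitely many vertices pairwise at distance $2$ but contains no subdivided $K_{3}$, since every connecting path factors through the hub. Extracting internal disjointness from the alternation pattern is precisely the nontrivial content of the Podewski--Ziegler argument, and it cannot be waved through with one application of Ramsey. Finally, for superstability you candidly concede that defining the attachment data so that the bottlenecks of examples like $K_{2,\infty}$ are absorbed, and proving the resulting base is finite, ``is exactly the content of Herre's theorem'' --- which is to say the theorem is assumed rather than proved at the decisive step. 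As it stands the proposal is a plausible roadmap with a false intermediate lemma and the two hardest steps missing; for the purposes of this paper the honest course is to do what the authors do and cite Herre's Theorem 1.
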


\begin{notation}
For $ a,b\in\mathbb{M}_{\alpha} $ let $ \dist(a,b) $ denote the length of the minimal path that connects $ a $ to $ b $ (If $ a $ and $ b $ are in separate connected components, set $ \dist(a,b)=\infty $). Recall that the $ r $-neighbourhood of a vertex $ a $ in a graph $ M $ is the set of all vertices $ b $ with $ \dist(a,b)\leq r, $ we denote this set by $ \Nr^{r}[a]. $
\end{notation}
 We recall the following definition from \cite{Goode_SomeTrivial}. 
 \begin{definition}\label{dfnTriviality}
  A stable theory $ T $ is called \textit{trivial}, if in any model $ M\models T, $ any set $ A\subseteq M, $ and any elements $ a,b,c\in M $ that are pairwise independent (in the sense of non-forking) over $ A, $ we have that $ \forkindep{a}{Ac}{b}. $
  \end{definition}
 \begin{theorem}\label{theoremSuperStability}
 For $ \alpha\in\omega+1, $ the theory $ T_{\alpha} $ is strictly superstable and trivial.
 \end{theorem}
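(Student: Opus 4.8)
I would prove the three assertions---superstability, the failure of $\omega$-stability, and triviality---separately, beginning with superstability, which is the most robust. By part~(i) of \Cref{lmaGeneralProperties} every member of $\kplusalphabar$ is an acyclic graph, so every model of $T_\alpha$ is a forest and in particular contains no cycle as a subgraph. Taking $m=3$, every graph in $\mathcal{C}^{r}_{3}$ arises by subdividing the edges of the triangle $K_{3}$ and therefore contains a cycle; since a subgraph of a forest is again a forest, no model of $T_\alpha$ can contain a subgraph isomorphic to a member of $\mathcal{C}^{r}_{3}$, for any $r$. Thus $\mathfrak{M}_\alpha$ is ultraflat in the sense of \Cref{dfnUltraflat}, and \Cref{factSuperStableGraphs} yields that $T_\alpha$ is superstable. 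Alternatively this is the uniqueness property for d-independence mentioned at the start of this section (\Cref{dfnDIndependence}).

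For \emph{strict} superstability it remains to show that $T_\alpha$ is not $\omega$-stable, and here I expect the genuine difficulty. The plan is to produce a countable parameter set $A$ with $|S_{1}(A)|=2^{\aleph_{0}}$. By \Cref{theoremQE} and \Cref{crlQE} a complete $1$-type is determined by the isomorphism type of the corresponding closure, so it is enough to exhibit continuum many pairwise contradictory, but individually consistent, closure configurations. For each $S\subseteq\omega$ I would write down a type $p_{S}(x)$ forcing, at a prescribed finite distance $i$ from $x$, the presence of a fixed ``marker'' sub-configuration precisely when $i\in S$. Each marker is recognised by a single closure formula $\psi_{i}(x)$, so the types $p_{S}$ are pairwise contradictory; and every finite fragment of $p_{S}$ is realised by closed-embedding the corresponding finite member of $\kplusalpha$ into $\mathfrak{M}_\alpha$, which is possible by the universality axioms of $\univ_\alpha$ (\Cref{lmaGenericSatTUniv}), so each $p_{S}$ is consistent with $T_\alpha$. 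The main obstacle is that the marker gadget must itself lie in $\kplusalpha$: the degree/path axiom confines vertices of high degree and thereby restricts the infinite configurations available as components. Consequently the gadget (a pendant branch point on an infinite ray, or a suitable low-degree replacement) has to be chosen in accordance with $\alpha$, and its membership in $\kplusalpha$ checked by hand; verifying that enough markers survive these constraints is the crux of the non-$\omega$-stability argument.

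For triviality I would first make explicit the geometric meaning of non-forking supplied by d-independence (\Cref{dfnDIndependence}): for small $A$ and tuples $\bar a,\bar b$, the relation $\forkindep{\bar a}{A}{\bar b}$ holds exactly when $\cl^{*}(\bar a A)$ and $\cl^{*}(\bar b A)$ are freely joined over $\cl^{*}(A)$, equivalently when no path connects $\bar a$ to $\bar b$ except through $\cl^{*}(A)$; this reading follows from \Cref{lmaGraphsClosurePath,lmaGraphsClosureFreejoin} together with \Cref{crlQE}. Triviality is then immediate from the forest structure of $\mathfrak{M}_\alpha$, in which any two connected vertices are joined by a \emph{unique} path. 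Suppose $a,b,c$ are pairwise independent over $A$; in particular $\forkindep{a}{A}{b}$, so the unique $a$--$b$ path, if it exists, already meets $\cl^{*}(A)$. As $\cl^{*}(A)\subseteq\cl^{*}(Ac)$, that same path meets $\cl^{*}(Ac)$, and uniqueness of paths rules out any alternative $a$--$b$ route disjoint from $\cl^{*}(Ac)$; hence $\forkindep{a}{Ac}{b}$, which is precisely the condition of \Cref{dfnTriviality}. The only point requiring care is to run the path argument at the level of closures rather than single vertices, for which the closure calculus of \Cref{secCLF} suffices, and the argument is uniform in $\alpha$.
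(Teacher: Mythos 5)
Your superstability argument is exactly the paper's: a forest contains no subgraph isomorphic to a subdivided $K_{3}$, hence is ultraflat, and \Cref{factSuperStableGraphs} applies. The genuine gap is in the non-$\omega$-stability part. You correctly identify that everything hinges on exhibiting a ``marker'' configuration that lies in $\kplusalpha$ and can be planted at prescribed distances from $x$, but you stop precisely there, declaring the choice and verification of the gadget to be ``the crux'' without carrying it out. That deferral is the whole content of the step: for small $n$ the defining axiom of $\kplusn$ (a vertex of degree at least $n+2$ cannot start a long path) genuinely restricts which branching patterns can coexist with an infinite component, so consistency of your types $p_{S}$ with $T_\alpha$ is exactly what has to be checked and is not checked. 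The paper commits to a concrete family: for each $I\subseteq\omega$ it writes down the type $p_{I}(x)$ asserting that every vertex at distance $i$ from $x$ has degree $3$ if $i\in I$ and degree $2$ otherwise, so that $\cl^{*}(x)$ is an infinite tree $\mathbb{T}_{I}$ branching exactly at the levels in $I$; distinct $I$ give contradictory types, finite fragments are realized using the universality axioms of $\univ_{\alpha}$, and one obtains $2^{\aleph_{0}}$ complete types already over the empty set, i.e., $T_{\alpha}$ is not small --- which is stronger than your goal of continuum many types over a countable parameter set. To close your gap you must at minimum produce such an explicit family and verify membership in $\kplusalpha$ for each $\alpha$.

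On triviality you take a genuinely different route: the paper simply cites Theorem 1.4 of \cite{Ivanov-SuperflatGraphs} together with the equivalence of monadic stability and tree decomposability, whereas you argue directly from the path characterization of forking. The direct route is attractive and more self-contained, but as written it is phrased in terms of $\cl^{*}(A)$, while the criterion of \Cref{factIvanovForking} (and of \Cref{lmaForking}) concerns paths avoiding $\acl(A)$. These are very different sets --- $\cl^{*}(A)$ is the union of all connected components meeting $A$ --- and with $\cl^{*}$ in place of $\acl$ your stated equivalence ``$\forkindep{a}{A}{b}$ iff every $a$--$b$ path meets $\cl^{*}(A)$'' is false: case (3) of \Cref{lmaForking} exhibits forking caused by points lying inside $\cl^{*}(A)$. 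The monotonicity argument (a path meeting $\acl(A)$ also meets $\acl(Ac)$) survives the repair, but you must also account for the new elements of $\acl(Abc)$ that appear in the condition $C(a/\acl(Ac))\cap\acl(Acb)=\varnothing$; in its present form the argument proves a statement about the wrong closure operator.
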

 \begin{proof}
Note that any forest is ultraflat because it does not contain any subgraph isomorphic to a member of $ \mathcal{C}_{3}^{r} $ for any $ r\in\omega. $ Hence, by \Cref{factSuperStableGraphs}, $ T_{\alpha} $ is superstable.

Recall that a theory is small if it has at most countably many types over the empty set. Now, for each $ I\subseteq\omega, $ let $ p_{I}(x) $ be the type that is defined as the following.
 \begin{itemize}
\item[(1)] If $ I=\varnothing, $ then $ ``\deg(x)=0"\in p_{I}(x). $ Otherwise, 
\item[(2)] if $ 0\in I, $ then $ ``\deg(x)=2"\in p_{I}(x) , $ otherwise $ ``\deg(x)=1"\in p_{I}(x). $
\item[(3)] If $ i+1\in I, $ then the following formula is in $ p_{I}(x) $
 \[ \forall y[\dist(x,y)=i+1\longrightarrow \deg(y)=3], \]
 otherwise, $ p_{I}(x) $ contains the following formula
 \[ \forall y[\dist(x,y)=i+1\longrightarrow \deg(y)=2]. \]
 \end{itemize}
 Literally, $ p_{I}(x) $ asserts the existence of a tree $ \mathbb{T}_{I}, $ with an infinite height that is rooted at $ x $ such that if $ i\in I, $ then all the elements at the $ i $th level of $ \mathbb{T}_{I} $ have degree $ 3, $ otherwise they are of degree $ 2. $ It is obvious that if $ I\neq J, $ then $ p_{I}(x)\neq p_{J}(x) $ and $ \mathbb{T}_{I}\not\cong\mathbb{T}_{J}. $ Note that, by \Cref{lmaGenericSatTUniv}, we have that $ T_{\alpha}\supseteq\univ_{\alpha}, $ hence, it is easy to see that $ p_{I}(x) $ is consistent with $ T_{\alpha}. $ Therefore, the theory $ T_{\alpha} $ is not small, hence not $ \omega $-stable. Triviality is a consequence of Theorem 1.4 of \cite{Ivanov-SuperflatGraphs} and the fact that \textit{monadic} stability is equivalent to tree decomposability (\cite{Baldwin&Shelah-SecondOrderQuantifiers}).
 \end{proof}
 
 We recall the following definitions and facts from \cite{Ivanov-SuperflatGraphs}.
 \begin{definition}\label{dfnIvanovComponent}
 Suppose that $ G $ is a graph and $ A $ is a subset of $ G. $
 \begin{itemize}
 \item[(i)] We say that two elements $ b_{1}, b_{2}\in G $ are  \textit{connected over} A if they are connected by a path disjoint from $ A. $
 \item[(ii)] If $ a\in G, $ the \textit{component} of $ a $ over $ A $ in $ G, $ denoted by $ C_{G}(a/A), $ is the set of all $ b\in G\backslash A $ connected with $ a $ over $ A. $
 \end{itemize}
 \end{definition}
 \begin{remark}\label{rmkComponentIsClosure}
 Note that if $ a\in M\in\kplusalphabar,A\subseteq M $ and $ a\not\in\cl^{*}_{M}(A), $ then by \Cref{lmaGraphsClosurePath} we have that $ C(a/A)=\cl^{*}_{M}(a). $
 \end{remark}
 \begin{fact}(Lemma 2.1 of \cite{Ivanov-SuperflatGraphs})\label{factIvanovForking}
Suppose that $ A=\acl(A)\subseteq B. $ Then $ \type(\bar{a}/B) $ does not fork over $ A $ if and only if for every $ a\in\bar{a}, $ we have that $ C(a/A)\cap B=\varnothing. $
 \end{fact}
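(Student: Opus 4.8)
The plan is to show that the graph-theoretic condition ``$C(a/A)\cap B=\varnothing$ for every $a\in\bar a$'' is exactly the non-forking condition $\forkindep{\bar a}{A}{B}$, by matching it against the quantifier elimination down to closure formulas (\Cref{theoremQE,crlQE}) together with the standard theory of forking in a stable theory (recall that $T_{\alpha}$ is superstable by \Cref{theoremSuperStability}). First I would recast the hypothesis closure-theoretically. By \Cref{lmaGraphsClosurePath} and \Cref{rmkComponentIsClosure}, ``$C(a/A)\cap B=\varnothing$ for all $a\in\bar a$'' says precisely that every path joining a coordinate of $\bar a$ to an element of $B$ meets $A$; equivalently, $\cl^{*}(\bar a B)$ splits as the free amalgam $\freeJoin{\cl^{*}(\bar a A)}{\cl^{*}(A)}{\cl^{*}(B)}$. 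I would also record the meaning of the hypothesis $A=\acl(A)$: the only algebraic elements over a set are the vertices forced onto unique inter-component paths (unique by \Cref{lmaMinPair}, finite in number by \Cref{crlWeakClFinite}), so $A=\acl(A)$ makes $A$ weakly closed and makes $\cl^{*}(A)$ an admissible amalgamation base. Since $T_{\alpha}$ is trivial, one should expect forking to track literal connections rather than a dimension count, which is exactly what the displayed condition does.

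For the non-forking direction, assume $C(a/A)\cap B=\varnothing$ for every $a\in\bar a$. Then by the reformulation and \Cref{lmaGraphsClosureFreejoin}(i) we have $\cl^{*}(\bar a B)=\freeJoin{\cl^{*}(\bar a A)}{\cl^{*}(A)}{\cl^{*}(B)}$, and by \Cref{lmaGraphsClosureFreejoin}(ii) together with \Cref{crlQE} the type $\type(\bar a/B)$ is fully determined by $\type(\bar a/A)$ and the single datum ``there is no path off $A$ between $\bar a$ and $B$''. I would upgrade this to an honest $A$-definition of $\type(\bar a/B)$: for a closure formula $\varphi(\bar x,\bar b)$ with $\bar b\in B$, its truth value at $\bar a$ is computed uniformly from $\cltype(\bar a/A)$ and $\cltype(\bar b/A)$, so $\type(\bar a/B)$ is definable over $A$. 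Definability of the type over $A=\acl(A)$ is exactly non-forking over $A$, giving $\forkindep{\bar a}{A}{B}$.

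For the converse I argue by contraposition. Suppose some $a\in\bar a$ has $b\in C(a/A)\cap B$, witnessed by a path $\bar p$ from $a$ to $b$ disjoint from $A$. There are two natural routes: (i) a direct dividing argument---since $b\notin\acl(A)=A$, the type $\type(b/A)$ is non-algebraic and carries an $A$-indiscernible sequence $(b_i)_{i<\omega}$ of realizations, and an appropriate closure formula expressing the $A$-avoiding connection of length $\lvert\bar p\rvert$ to $b$ is, by the unique-path property of forests (\Cref{lmaGraphsClosureUniquePath}), not simultaneously satisfiable by $a$ for two spread-out $b_i$, hence divides; or (ii) a stationarity argument---by the non-forking direction the free extension of $\type(\bar a/A)$ is realized by some $\bar a'\equiv_A\bar a$ with $C(a'/A)\cap B=\varnothing$, and were $\type(\bar a/B)$ non-forking it would equal this free type, whence $\cl^{*}(\bar a B)\cong_B\cl^{*}(\bar a' B)$ by \Cref{crlQE}; but such a $B$-isomorphism fixes $A$ and $b$ and therefore carries the $A$-avoiding path $\bar p$ from $a$ to $b$ to one from $a'$ to $b$, contradicting freeness of $\bar a'$.

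The main obstacle is the uniformity required over a base that is merely $\acl$-closed rather than a model. In the non-forking direction this is the passage from ``determined by the data over $A$'' to genuine definability, i.e.\ stationarity of the free extension over $A=\acl(A)$; in the converse (dividing) it is securing $2$-inconsistency when $b\in\cl^{*}(A)$, where the conjugates $b_i$ may all lie in the component of $A$ and the short $A$-avoiding paths need not pull apart. The cleanest way to settle both at once---and the route I would ultimately take---is to verify that graph-freeness satisfies the Kim--Pillay axioms of an abstract independence relation, the only delicate axiom being symmetry, which again reduces to the unique-path property of the forest; by the uniqueness of stable independence this relation must be non-forking. Equivalently, one identifies graph-freeness with the paper's d-independence (\Cref{dfnDIndependence}) and invokes its established uniqueness property, with superstability (\Cref{theoremSuperStability}) guaranteeing the ambient framework.
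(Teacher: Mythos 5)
First, for context: the paper offers no proof of this statement at all --- it is imported verbatim as Lemma 2.1 of \cite{Ivanov-SuperflatGraphs} --- so your attempt is necessarily a reconstruction, and judged on its own terms it has a genuine gap at its very first step. The ``equivalently'' in your opening paragraph is false: $C(a/A)$ is the connected component of $a$ in the graph obtained by \emph{deleting} the vertices of $A$, so the condition $C(a/A)\cap B=\varnothing$ also constrains $A$-avoiding paths running \emph{inside a single connected component of} $\cl^{*}(A)$, which the splitting $\cl^{*}(\bar{a}B)=\freeJoin{\cl^{*}(\bar{a}A)}{\cl^{*}(A)}{\cl^{*}(B)}$ cannot detect, since $\cl^{*}$ is a union of whole connected components (\Cref{lmaGraphsClosurePath}). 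Concretely, work in $T_{\omega}$ and take $c\in\mathbb{T}_{\infty}$ with $\acl(c)=\{c\}$ (as in the proof of \Cref{theoremOmegaRank}), $A=\{c\}$, $b$ a neighbour of $c$, $B=\{c,b\}$, and $a$ a neighbour of $b$ distinct from $c$. Then $b\in C(a/A)\cap B$ and $\type(a/B)$ indeed forks over $A$: if $b_{1},b_{2}$ are distinct conjugates of $b$ over $c$, a common neighbour $x$ of $b_{1},b_{2}$ would close the cycle $x\,b_{1}\,c\,b_{2}\,x$, so $R(x,b)$ $2$-divides over $A$. Yet $\cl^{*}(aA)=\cl^{*}(A)=\cl^{*}(B)=\cl^{*}(aB)$ (one component), and your free-amalgam condition holds vacuously. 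This mismatch propagates into your non-forking direction: \Cref{lmaGraphsClosureFreejoin} is stated only for free joins over $\varnothing$, with the hypothesis $\bar{b}\cap\cl^{*}_{M}(\bar{a})=\varnothing$, so it handles coordinates of $\bar{a}$ lying outside $\cl^{*}(A)$ but says nothing about a coordinate $a\in\cl^{*}(A)$ sharing a component with points of $B$ --- precisely the delicate case isolated as case (3) of \Cref{lmaForking}, where one must analyse the unique path from $a$ into $A$ (\Cref{lmaGraphsClosureUniquePath}) and its interaction with $B$. Your asserted ``uniform computation'' of $\varphi(\bar{a},\bar{b})$ from $\cltype(\bar{a}/A)$ and $\cltype(\bar{b}/A)$ is exactly the relative, over-$A$ analogue of \Cref{lmaGraphsClosureFreejoin}(ii); it is the thing to be proved, not available off the shelf.

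Beyond this, several of your bridging claims are unsupported as stated. ``Definability of the type over $A=\acl(A)$ is exactly non-forking over $A$'' is the criterion for models or $\acl^{\eq}$-closed bases and over a mere real-sort algebraically closed set requires producing a suitably invariant \emph{global} extension; likewise the stationarity of $\type(\bar{a}/A)$ over such a base, on which your route (ii) rests, is not automatic in a stable theory and would itself need the over-$A$ determination lemma above. You candidly flag the failure mode of your dividing route (i) --- conjugates $b_{i}$ sharing a midpoint off $A$ can make the naive path-formula consistent --- but the proposed repair is where the last gap sits: identifying graph-freeness with d-independence and ``invoking its established uniqueness'' is circular relative to this paper, because the paper obtains \Cref{theoremForkEqualDIndep} \emph{as a consequence of} this Fact, and only remarks at the end of Section \ref{secForking}, without proof, that uniqueness of d-independence could instead be proved directly. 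The Kim--Pillay strategy is viable (and is morally how one would reprove Ivanov's lemma from scratch), but then verifying the axioms for the component relation --- not just symmetry, but extension, base monotonicity over non-model bases, and stationarity over models --- \emph{is} the proof, and your sketch defers all of it.
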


Using \Cref{lmaGraphsClosureUniquePath} and the fact above, the following lemma, provides a more concrete description of forking inside $ \mathbb{M}_{\alpha}. $
 \begin{lemma}\label{lmaForking}
 Suppose that $ a\in\mathbb{M}_{\alpha}, $ and $ A\leq B\leq\mathbb{M}_{\alpha}. $ Then, $ \type(a/B) $ forks over $ A $ if and only if either of the following cases occur:
 \begin{itemize}
 \item[(1)] $ a\in\cl^{*}(B)\backslash\cl^{*}(A). $
 \item[(2)] $ a\in(\cl^{*}(A)\backslash\acl(A))\cap B. $
 \item[(3)] $ a\in\cl^{*}(A)\backslash(B\acl(A)) $ and if $ \bar{p} $ is the unique path that connects $ a $ to $ A $ with $ \card[\bar{p}\cap A]=1, $ then $ \bar{p}\cap (B\backslash A)\neq\varnothing $ and $ \bar{p}\cap(B\backslash A) $ is not algebraic over $ A. $ (Note that, by \Cref{lmaGraphsClosureUniquePath}, there exists such unique $ \bar{p}. $)
 \end{itemize}
 \end{lemma}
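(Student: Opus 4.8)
The plan is to reduce the statement to the graph-theoretic description of non-forking provided by \Cref{factIvanovForking}, and then to translate the resulting ``component'' condition into the three path conditions using \Cref{lmaGraphsClosurePath} and \Cref{lmaGraphsClosureUniquePath}. First I would pass to the algebraic closure of the base: since in any stable theory forking over $A$ coincides with forking over $\acl(A)$, and algebraic elements over $A$ may be added to the right-hand side without affecting independence, $\type(a/B)$ forks over $A$ if and only if $\type(a/B\acl(A))$ forks over $\acl(A)$. Now $\acl(A)=\acl(\acl(A))\subseteq B\acl(A)$, so \Cref{factIvanovForking} applies and yields that $\type(a/B)$ forks over $A$ exactly when $C(a/\acl(A))\cap(B\acl(A))\neq\varnothing$. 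As the component $C(a/\acl(A))$ is by definition disjoint from $\acl(A)$, this becomes the single clean criterion
\[ \type(a/B)\text{ forks over }A\iff C(a/\acl(A))\cap B\neq\varnothing. \]
Everything that follows is a translation of the right-hand side.

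Second, I would split according to the position of $a$. If $a\notin\cl^*(A)$, then by \Cref{rmkComponentIsClosure} we have $C(a/\acl(A))=\cl^*(a)$, so the criterion says that some element of $B$ is connected to $a$; by \Cref{lmaGraphsClosurePath} this happens precisely when $a\in\cl^*(B)$, i.e. exactly case (1). If $a\in\acl(A)$, then $C(a/\acl(A))=\varnothing$ and there is no forking, consistent with the fact that cases (2),(3) exclude $\acl(A)$. If $a\in(\cl^*(A)\setminus\acl(A))\cap B$, then $a$ itself lies in $C(a/\acl(A))\cap B$, so the type forks; this is case (2), matching the elementary fact that a realized type over $B$ forks over $A$ iff its realization is non-algebraic over $A$. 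The remaining region, $a\in\cl^*(A)\setminus(B\acl(A))$, is case (3) and carries all the content.

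Third, in the case-(3) region I would invoke \Cref{lmaGraphsClosureUniquePath} (legitimate since $A\leq\mathbb{M}_\alpha$ follows from $A\leq B\leq\mathbb{M}_\alpha$) to fix the unique path $\bar p$ from $a$ to $A$ with $\card[\bar p\cap A]=1$, and show that $C(a/\acl(A))\cap B\neq\varnothing$ is equivalent to $\bar p$ meeting $B\setminus A$ in a point that is non-algebraic over $A$. For the forward direction, if $b\in C(a/\acl(A))\cap B$, then $a$, $b$, and the endpoint $a_0\in A$ of $\bar p$ lie in one component of $\mathbb{M}_\alpha$; since $B\leq\mathbb{M}_\alpha$, the unique $b$-to-$a_0$ path stays inside $B$ (weak closedness keeps connecting paths within $B$, cf. \Cref{crlEasyCharac}\,(iii)), so its branch point $q$ with $\bar p$ lies in $\bar p\cap(B\setminus A)$ and, being in $C(a/\acl(A))$, is outside $\acl(A)$ and hence non-algebraic. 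For the converse, given a non-algebraic $b_0\in\bar p\cap(B\setminus A)$, I would argue that the subpath of $\bar p$ from $a$ to $b_0$ avoids $\acl(A)$: any intermediate vertex lying in $\acl(A)$ would have $b_0$ on its unique path to $A$, and since in an acyclic graph such a path is pointwise fixed by every automorphism fixing its endpoints, $b_0$ would be in the $\dcl$ of an algebraic set and thus in $\acl(A)$, a contradiction; hence $b_0\in C(a/\acl(A))\cap B$ and the type forks.

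The main obstacle is precisely this case-(3) equivalence. The delicate points are, first, that off-path witnesses must be accounted for, which is exactly where the hypothesis $B\leq\mathbb{M}_\alpha$ is indispensable: without weak closedness of $B$ one can join a point of $B$ to $A$ through vertices outside $B$, producing forking that $\bar p$ alone does not detect. Second, controlling $\acl(A)$ along $\bar p$ rests on the rigidity of unique paths in a forest, so that being algebraic over $A$ propagates inward toward $A$ along the path but never outward. Isolating and justifying these two facts is where the real work lies, whereas the reduction carried out in the first two paragraphs is essentially formal.
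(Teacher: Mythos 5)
Your proof is correct and takes precisely the route the paper intends: the paper in fact states \Cref{lmaForking} without proof, saying only that it follows from \Cref{factIvanovForking} and \Cref{lmaGraphsClosureUniquePath}, and your argument is a faithful elaboration of that reduction. The two points you isolate as delicate --- that $B\leq\mathbb{M}_{\alpha}$ forces connecting paths between elements of $B$ to stay inside $B$ (\Cref{crlEasyCharac}(iii)), and that algebraicity over $A$ propagates along the unique path toward $A$ by rigidity of paths in a forest --- are exactly the details the paper leaves implicit, and you handle them correctly.
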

 
   \begin{definition}\label{dfnDIndependence}
   Suppose that $ \bar{b} $ is a finite tuple and $ A $ and $ C $ are subsets of $ \mathbb{M}_{\alpha} $.
   \begin{itemize}
   \item[(i)]  We say that $ \bar{b} $ is \textit{d-independent} of $ A $ over $ C $ and, in notations, we write $ \indep[\dMe]{\bar{b}}{C}{A}, $ if the following hold
   \begin{itemize}
   \item[$ (\text{a}) $]  $ \dMe(\bar{b}/C)=\dMe(\bar{b}/AC) $
   \item[$ (\text{b}) $] $ \cl(\bar{b}C)\cap\cl(AC)=\cl(C). $
   \end{itemize}
   		
   \item[(ii)] We say that $ B $ is d-independent of $ A $ over $ C, $ in notations $ \indep[\dMe]{B}{C}{A,} $ if every finite subset of $ B $ is d-independent of $ A $ over $ C. $ 
   \end{itemize}
   \end{definition}
     
    The following fact is well known in the literature, one can refer to \cite{Baldwin&Shi-StableGen} and \cite{Wagner-Relational} for more details.
     
   \begin{fact}\label{factDIndEQUIVFreeJoin}
   Suppose that $ A $ and $ B $ are weakly closed in $ M\in\kplusalphabar $ with $ A\cap B=C. $ The following are equivalent
   \begin{itemize}
   \item[(i)] $ \indep[\dMe]{A}{C}{B}. $ 
   \item[(ii)] $ \cl(AB)=\freeJoin{A}{C}{B}. $ More precisely, $ AB\leq M $ and $ A $ and $ B $ are in free amalgam in $ M $ over $ C. $
   \end{itemize}
   \end{fact}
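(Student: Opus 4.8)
The plan is to collapse both conditions to a single combinatorial statement about the forest $M$ and then verify the equivalence by elementary tree combinatorics; throughout I use that $M$ is acyclic (\Cref{lmaGeneralProperties}(i)) and that, by \Cref{lmaGeneralProperties}(ii), the predimension $\delta$ of a finite set is its number of connected components. First I would record the relevant facts about weak closure. By the weak form of \Cref{crlEasyCharac}(iii), weak closedness of a set means that every path of $M$ between two of its elements already lies inside it; hence the intersection $C=A\cap B$ is again weakly closed, so $\cl(A)=A$, $\cl(B)=B$ and $\cl(C)=C$. The engine of the proof is the dimension formula: for every finite $X\subseteq_{\omega}M$,
\[ \dMe(X)=\delta(\cl(X))=\#\{\text{connected components of } M \text{ meeting } X\}. \]
This follows because, by \Cref{crlEasyCharac}(iv), the weak closure is built by adjoining paths linking distinct components, each such path lowering $\delta$ by exactly one, so $\cl(X)$ has one component per $M$-component met by $X$. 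Writing $\pi(X)$ for the set of $M$-components meeting $X$, we get $\dMe(X)=|\pi(X)|$ and, via \Cref{dfnDimension}(iii), $\dMe(\bar a/Y)=|\pi(\bar a)\setminus\pi(Y)|$ for any (possibly infinite) $Y$.

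Next I would unwind condition (i). Since $C\subseteq B$ we have $BC=B$ and $\cl(BC)=B$, while $\cl(\bar a C)\subseteq\cl(A)=A$ for every finite $\bar a\subseteq A$ by monotonicity; hence clause (b) of \Cref{dfnDIndependence} is \emph{automatic}, because $\cl(\bar a C)\cap\cl(BC)\subseteq A\cap B=C=\cl(C)$ and the reverse inclusion is trivial. Thus (i) is equivalent to clause (a) holding for all finite $\bar a\subseteq A$, which by the dimension formula reads $|\pi(\bar a)\setminus\pi(C)|=|\pi(\bar a)\setminus\pi(B)|$. As $\pi(C)\subseteq\pi(B)$ and $\pi(\bar a)$ is finite, equality of cardinalities forces equality of sets, and quantifying over all finite $\bar a\subseteq A$ this becomes the single condition
\[ (\ast)\qquad\text{every component of } M \text{ meeting both } A \text{ and } B \text{ also meets } C. \]

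Finally I would prove (ii)$\Leftrightarrow(\ast)$ by tree combinatorics. For (ii)$\Rightarrow(\ast)$: if $a\in A$ and $b\in B$ lie in a common component, the path between them lies in $A\cup B$ because $AB\leq M$; colouring its vertices by membership in $A\setminus C$, $C$, or $B\setminus C$ and using that no edge joins $A\setminus C$ to $B\setminus C$ (the free-amalgam clause), a path from an $A\setminus C$-vertex to a $B\setminus C$-vertex must pass through $C$, so the component meets $C$. For $(\ast)\Rightarrow$(ii) I would exploit the median $m$ of three vertices in a tree. A putative cross edge $a\in A\setminus C$, $b\in B\setminus C$, together with the $C$-point $c$ supplied by $(\ast)$, forces $m\in\{a,b\}$ onto the opposite side's path ($P_{ac}\subseteq A$ or $P_{bc}\subseteq B$), contradicting $b\notin A$ or $a\notin B$; this yields the free-amalgam clause. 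For $AB\leq M$, given $u\in A$, $v\in B$ in a common component with $c\in C$ from $(\ast)$, the median decomposition gives $P_{uv}=P_{um}\cup P_{mv}\subseteq P_{uc}\cup P_{cv}\subseteq A\cup B$, so every such path stays in $A\cup B$.

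The dimension bookkeeping is routine; the only genuinely delicate point is the tree-combinatorial step in $(\ast)\Rightarrow$(ii), where one must pin down the exact location of the median and treat the cases of where $u,v$ sit relative to $C$. One could instead shortcut the entire equivalence through the forking characterisation of \Cref{factIvanovForking} together with \Cref{lmaGraphsClosureUniquePath}, but the direct combinatorial route sketched above is self-contained and keeps the argument inside the predimension framework.
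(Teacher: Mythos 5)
Your proof is correct, but it is worth noting that the paper does not prove this statement at all: it is recorded as a \emph{Fact} and attributed to the literature (Baldwin--Shi and Wagner), where it is established in the general predimension framework. What you have done instead is give a self-contained, forest-specific argument, and the route is sound: reducing $\dMe$ to counting $M$-components via $\dMe(X)=|\pi(X)|$, observing that clause (b) of \Cref{dfnDIndependence} is automatic once $A$ and $B$ are weakly closed (since $\cl(\bar aC)\subseteq A$, $\cl(BC)=B$ and $A\cap B=C=\cl(C)$), and collapsing both (i) and (ii) to the single condition $(\ast)$ that every $M$-component meeting $A$ and $B$ meets $C$. The finite-cardinality argument turning $|\pi(\bar a)\setminus\pi(C)|=|\pi(\bar a)\setminus\pi(B)|$ into set equality is right, and the median/tree arguments for $(\ast)\Leftrightarrow(\mathrm{ii})$ check out (the cross-edge case and the $P_{uv}\subseteq P_{uc}\cup P_{cv}$ decomposition both work because $A$, $B$ are path-closed). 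What your approach buys is transparency and independence from the cited sources, at the cost of generality: the identification of $\delta$ with component-counting and of weak closure with path-closure is special to acyclic graphs, so the argument would not transfer to other Hrushovski classes the way the cited general proof does. Within this paper's setting that trade-off is entirely reasonable.
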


  Using \Cref{factIvanovForking} and \Cref{factDIndEQUIVFreeJoin} one can see that in $ T_{\alpha}, $ non-forking coincides with the notion of d-independence over the algebraically closed sets. Hence, the following theorem is established.
  
  \begin{theorem}\label{theoremForkEqualDIndep}
  Suppose that $ \bar{b} $ is a finite tuple and $ A $ and $ C $ are subsets of $ \mathbb{M}_{\alpha} $ with $ \acl(A)=A. $ Then $ \indep[\dMe]{\bar{b}}{A}{C} $ if and only if $ \forkindep{\bar{b}}{A}{C}. $
  \end{theorem}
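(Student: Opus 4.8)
The plan is to show that each side of the equivalence is governed by the single combinatorial condition $(\star)$: \emph{for every $b\in\bar b$ and every $c\in C$, every path from $b$ to $c$ in $\mathbb{M}_{\alpha}$ meets $A$}, and then to read off the theorem. The first thing I would record is the reduction that makes the two sides comparable: since $\acl(A)=A$, the set $A$ is already weakly closed, and in fact $\cl(A)=A$. Indeed, by \Cref{lmaGraphsClosureUniquePath} any vertex that would be added in forming $\cl(A)$ lies on a path between two vertices of $A$, and this path is unique because $\mathbb{M}_{\alpha}$ is a forest; hence such a vertex is fixed by every automorphism fixing $A$ pointwise and so lies in $\dcl(A)\subseteq\acl(A)=A$. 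This identity is exactly what lets the weak-closure bookkeeping in \Cref{dfnDIndependence} match the algebraic-closure hypothesis of \Cref{factIvanovForking}.

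For the forking side I would argue directly from \Cref{factIvanovForking} applied with $B:=AC$: $\forkindep{\bar b}{A}{C}$ holds iff $C(b/A)\cap AC=\varnothing$ for every $b\in\bar b$, and since $C(b/A)$ is disjoint from $A$ this says precisely $C(b/A)\cap C=\varnothing$. By \Cref{rmkComponentIsClosure} and \Cref{lmaGraphsClosurePath}, $C(b/A)$ is exactly the set of vertices reachable from $b$ by a path avoiding $A$, so the conjunction over $b\in\bar b$ is verbatim $(\star)$. This direction is routine.

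For the d-independence side the two clauses of \Cref{dfnDIndependence} each do part of the work, and the point is that neither alone suffices. Reading $\dMe(X)$ as the number of connected components of $\mathbb{M}_{\alpha}$ meeting $X$ (via \Cref{lmaGeneralProperties}(ii)), clause (a), $\dMe(\bar b/A)=\dMe(\bar b/AC)$, is equivalent to the statement that no tree of $\mathbb{M}_{\alpha}$ contains a vertex of $\bar b$ and a vertex of $C$ while avoiding $A$; this rules out the failures of $(\star)$ in which $\bar b$ and $C$ sit in a common tree disjoint from $A$. Clause (b), $\cl(\bar bA)\cap\cl(AC)=\cl(A)=A$, handles the remaining failures, where $\bar b$ and $C$ lie in a tree that \emph{does} meet $A$ but the $b$--$c$ path still avoids it: taking the median vertex $v$ of $\{b,c,a\}$ for a witness $a\in A$ in that tree, one checks $v$ lies on both the $a$--$b$ path and the $a$--$c$ path, hence $v\in\cl(\bar bA)\cap\cl(AC)$, while $v\notin A$ because $v$ lies on the $A$-avoiding $b$--$c$ path. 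I would organize this through \Cref{factDIndEQUIVFreeJoin}, applied to the weakly closed sets $\cl(\bar bA)$ and $\cl(AC)$, whose intersection is $A$ exactly when (b) holds: the fact then reformulates d-independence as $\cl(\bar bA)\cl(AC)=\freeJoin{\cl(\bar bA)}{A}{\cl(AC)}$, and in a forest this free amalgam is precisely the absence of any cross-path off $A$, i.e.\ $(\star)$. A preliminary step here is the standard invariance of d-independence under passing to weak closures, so that $\indep[\dMe]{\bar b}{A}{C}$ is equivalent to d-independence of $\cl(\bar bA)$ from $\cl(AC)$ over $A$.

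The main obstacle I expect is the tree-combinatorial core of the d-independence direction: proving cleanly that clauses (a) and (b) together are equivalent to $(\star)$ — that a $b$--$c$ path avoiding $A$ always manifests either as a shared component disjoint from $A$ (violating (a)) or, via the median vertex, as an element of $\cl(\bar bA)\cap\cl(AC)$ outside $A$ (violating (b)), and conversely that $(\star)$ forces both clauses (here \Cref{lmaDifferenceinClosure} and the identity $\cl(A)=A$ keep internal branch points of the spanning subtrees inside $A$). The delicate point throughout is the bookkeeping between the strong closure $\cl^{*}$ and the components governing forking in \Cref{factIvanovForking} on one side, and the weak closures $\cl$ appearing in \Cref{dfnDIndependence} and \Cref{factDIndEQUIVFreeJoin} on the other; the reduction $\cl(A)=A$ established at the outset is exactly what prevents these two descriptions from drifting apart.
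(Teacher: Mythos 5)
Your proposal is correct and follows essentially the same route as the paper, which simply cites \Cref{factIvanovForking} and \Cref{factDIndEQUIVFreeJoin} and asserts that the two notions visibly coincide over algebraically closed sets. You supply the tree-combinatorial details (the separation condition $(\star)$, the identity $\cl(A)=A$, and the median-vertex argument matching clause (b)) that the paper leaves implicit, but the underlying argument is the same.
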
 
 \begin{notation}\label{notInfiniteBranchTree}
 Let $ \mathfrak{p}(x) $ be the type expressing that for each $ m\geq1 $ and every $ y, $ if $ \dist(x,y)\leq m, $ then the degree of $ y $ is infinite.
 \end{notation}
 
 \begin{remark}\label{rmkInfiniteBranchingTree}
It is easy to see that if an element $ a $ in a structure $ M\in\kplusomegabar $ realizes $ \mathfrak{p}(x), $ then $ \cl_{M}(a) $ is an infinite-branching tree of infinite height which we denote by $ \mathbb{T}_{\infty}. $ Furthermore, if $ M\models T_{\omega}, $ then, by \Cref{theoremQE}, every two elements of $ \mathbb{T}_{\infty} $ have the same type in $ M, $ hence, any (finite) path in $ \mathbb{T}_{\infty} $ is algebraically closed.
 \end{remark}
 
 \begin{theorem}\label{theoremOmegaRank}
 $ T_{\omega} $ is of U-rank $ \omega. $ Moreover, it is not 1-based.
 \end{theorem}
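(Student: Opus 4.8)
The plan is to establish the two assertions separately: that the supremum of the Lascar ranks of the $1$-types of $T_{\omega}$ is exactly $\omega$, and that $T_{\omega}$ is not $1$-based. Throughout I work inside the monster $\mathbb{M}_{\omega}$ and compute forking through the explicit description in \Cref{lmaForking}, identifying forking with $\dMe$-independence via \Cref{theoremForkEqualDIndep}. The central object for both halves is a generic vertex of infinite degree, that is, a realization $a$ of the type $\mathfrak{p}(x)$ of \Cref{notInfiniteBranchTree}, whose closure $\cl^{*}(a)$ is the infinite-branching, infinite-height tree $\mathbb{T}_{\infty}$ of \Cref{rmkInfiniteBranchingTree}; recall that in $\mathbb{M}_{\omega}$ every finite path inside $\mathbb{T}_{\infty}$ is algebraically closed and all vertices of $\mathbb{T}_{\infty}$ have the same type, hence are conjugate.

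For the lower bound I would fix such an $a$ and show $U(\type(a/\varnothing))\ge n$ for every $n$ by exhibiting a chain of $n$ successive forking extensions. Choose $w\in\cl^{*}(a)$ with $\dist(a,w)=n$ and let $a=v_{n},v_{n-1},\dots,v_{1},v_{0}=w$ be the unique path joining them. Put $A_{0}=\varnothing$ and $A_{k}=\{v_{0},\dots,v_{k-1}\}$ for $1\le k\le n$; each $A_k$ is a path and hence is both weakly closed and algebraically closed. The first extension, from $A_{0}$ to $A_{1}=\{w\}$, forks by clause~(1) of \Cref{lmaForking}, since $a\in\cl^{*}(w)\setminus\cl^{*}(\varnothing)$. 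Each later extension, from $A_{k}$ to $A_{k+1}=A_{k}\cup\{v_{k}\}$, forks by clause~(3): the unique path from $a$ to $A_{k}$ passes through the newly added vertex $v_{k}\in A_{k+1}\setminus A_{k}$, and $v_{k}$ is non-algebraic over $A_{k}$ because $v_{k-1}$ has infinitely many conjugate neighbours over $A_{k}$. This yields $n$ forking steps, so $U(\type(a/\varnothing))\ge n$; letting $n\to\infty$ gives $U(\type(a/\varnothing))\ge\omega$.

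For the matching upper bound I would show that no $1$-type exceeds rank $\omega$, which also pins the generic type at exactly $\omega$. Let $A=\acl(A)$ and consider $\type(a/A)$. If $a\in\cl^{*}(A)$, then $\dMe(a/A)=0$ and, by \Cref{lmaGraphsClosureUniquePath}, $a$ is joined to $A$ by a single path of some finite length $d$; by \Cref{lmaForking} any forking extension merely reveals one of the $d-1$ interior vertices of that path, so at most $d-1$ forking steps are possible and $U(\type(a/A))<\omega$. If $a\notin\cl^{*}(A)$, then $\dMe(a/A)=1$, and by \Cref{lmaForking} every forking extension places $a$ into the closure of the enlarged base at some finite distance, reducing to the previous case; hence $U(\type(a/A))$ is a supremum of finite ordinals and is $\le\omega$. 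In particular no forking extension of a dimension-$0$ type can again have infinite rank, so the generic type has rank exactly $\omega$ and $U(T_{\omega})=\omega$. The delicate point here is the bookkeeping that dimension drops only once and that, once $a$ lies in the closure of the base, only finitely many forking steps remain.

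Finally, for non-$1$-basedness I would realize two adjacent generic vertices $a$ and $c$, each a realization of $\mathfrak{p}(x)$ joined by a single edge. Then $\acl(a)=\{a\}$ and $\acl(c)=\{c\}$, while $a\in\cl^{*}(c)$, so by clause~(1) of \Cref{lmaForking} the type $\type(a/c)$ forks over $\varnothing$. If $T_{\omega}$ were $1$-based, the canonical base of $\type(a/\acl^{\eq}(c))$ would lie in $\acl^{\eq}(a)$; but that canonical base is interalgebraic with the parameter $c$, since deleting $c$ changes the type, and $c\notin\acl^{\eq}(a)$ because $a$ has infinitely many conjugate neighbours. This contradiction shows $T_{\omega}$ is not $1$-based. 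I expect this last step to be the main obstacle, as it must be carried out at the level of imaginaries: the point worth stressing is that, although $T_{\omega}$ is trivial and its $\dMe$-pregeometry is therefore degenerate and locally modular, the edge relation nonetheless supplies a canonical parameter outside the algebraic closure of any single realization, which is exactly what defeats $1$-basedness.
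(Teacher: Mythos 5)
Your argument follows essentially the same route as the paper: the lower bound is the identical forking chain $\type(a/\varnothing)\subset\type(a/v_{0})\subset\type(a/v_{0}v_{1})\subset\cdots$ along a path in $\mathbb{T}_{\infty}$, justified by clauses (1) and (3) of \Cref{lmaForking}; the upper bound is the same bookkeeping that once $a\in\cl^{*}(A)$ the length of the unique path to $A$ bounds the number of remaining forking steps; and the non-1-basedness argument uses the same configuration of a generic vertex $a$ and an adjacent vertex $c$.

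The one place where your justification is thinner than it needs to be is the assertion that the canonical base of $\type(a/\acl^{\eq}(c))$ is interalgebraic with $c$ ``since deleting $c$ changes the type.'' That observation only shows the type forks over $\varnothing$, i.e.\ that the canonical base is not algebraic over $\varnothing$; it does not locate the canonical base at $c$. The paper fills this in by first checking that $q=\type(a/c)$ is stationary (via \Cref{factIvanovForking} and \Cref{lmaGraphsClosureFreejoin}(ii)), then observing that $q(\mathbb{M}_{\omega})=\Nr^{1}[c]$ and that, by acyclicity, an automorphism fixes $\Nr^{1}[c]$ setwise if and only if it fixes $c$; hence $\mathrm{Cb}(q)=\dcl^{\eq}(c)$, which together with $c\notin\acl^{\eq}(a)$ (infinitely many conjugate neighbours) defeats 1-basedness. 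You correctly flagged this as the delicate step; the automorphism argument on the solution set is the missing piece.
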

 \begin{proof} 
We first show that for each element $ a, $ the U-rank of $ \type(a) $ is at most $ \omega. $ To this end, we show that for every weakly closed $ B $ with $ \type(a/B) $ forking over the empty set, $ \type(a/B) $ has a finite U-rank. Based on \Cref{lmaForking}, the element $ a $ must be in $ \cl^{*}(B). $ By \Cref{lmaGraphsClosureUniquePath}, there is a unique path $ \bar{p}, $ that connects $ a $ to $ B $ with $ \card[\bar{p}\cap B]=1. $ By induction on $ m=\card[\bar{p}] $ and using the Lascar inequality, we can see that the U-rank of $ a $ over $ B $ is less than or equal to $ m. $
 
Moreover, the type $ \mathfrak{p}(x), $ introduced above, is finitely satisfiable in $ \mathfrak{M}_{\omega}. $ Hence, there is a realization $ a\in\mathbb{M}_{\omega} $ for  $ \mathfrak{p}(\bar{x}). $ For each $ n\geq 1$ one can find an element $ b_{n}\in\cl^{*}(a)=\mathbb{T}_{\infty} $ that is connected to $ a $ by a path of length $ n, $ say $ b_{n}b_{n-1}\ldots b_{1}a. $ By Case 1 in \Cref{lmaForking}, the type $ \type(a/b_{n}) $ forks over the empty set. Moreover, by \Cref{rmkInfiniteBranchingTree}, for each $ 1< i\leq n, $ we have that $ a,b_{i-1}\not\in\acl(b_{i}\cdots b_{n}). $ Hence, using Case 3 in \Cref{lmaForking}, the following
  \[ \mathfrak{p}(x)=\type(a/\varnothing)\subset\type(a/b_{n})\subset\type(a/b_{n-1}b_{n})\subset\cdots\subset\type(a/b_{1}\cdots b_{n}) \]
 is a forking chain of length $ n. $ This proves that $ T_{\omega} $ has U-rank $ \omega. $
 
Now, suppose that $ b\in\mathbb{T}_{\infty} $ with $ \mathbb{M}_{\omega}\models R(a,b). $ Since $ \mathbb{T}_{\infty}\leq^{*}\mathbb{M}_{\omega}, $ we have that $ \acl(b)=b. $ For a subset $ E $ containing $ b $ with $ \forkindep{a'}{b}{E}, $ by \Cref{factIvanovForking}, we have that $ \cl^{*}(aE)=\freeJoin{\cl^{*}(ab)}{b}{\cl^{*}(E)}. $ Hence, by part (ii) of  \Cref{lmaGraphsClosureFreejoin}, the type $ q(x)=\type(a/b) $ is stationary. Let $ \Nr^{1}[b] $ be the collection of all elements $ z\in\mathbb{T}_{\infty} $ with $ \dist(z,b)=1. $ We have that $ q(\mathbb{M}_{\omega})=\Nr^{1}[b]. $ On the other hand, since $ \mathbb{M}_{\omega} $ does not have a cycle, for every $ f\in\aut(\mathbb{M}_{\omega}) $ we have that $ f $ fixes $ \Nr^{1}[b] $ setwise if and only if it fixes $ b. $ Hence, the canonical base of $ q $ is $ \dcl^{\eq}(b). $ However, by \Cref{rmkInfiniteBranchingTree}, $ b $ is not algebraic over $ a $ and therefore, $ T_{\omega} $ is not 1-based.
 \end{proof}
  
  \begin{theorem}\label{theoremFiniteRank}
  For every $ n\in\omega, $ the theory $ T_{n} $ has U-rank $ n+1 $ and is 1-based.
  \end{theorem}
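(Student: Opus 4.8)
The plan is to prove both assertions---that $U(T_n)=n+1$ and that $T_n$ is 1-based---by reading the relevant combinatorics off the forking description in \Cref{lmaForking} together with the defining axiom of $\kplusn$. The only way the parameter $n$ enters is through the following consequence of that axiom: since $T_n\supseteq\univ_n$ (\Cref{lmaGenericSatTUniv}) and $\univ_n$ contains the defining axiom of $\kplusn$, every vertex of infinite degree in $\mathbb{M}_n$ admits no path on $2n+1$ vertices, i.e. has eccentricity at most $2n-1$ edges. In particular any path all of whose vertices have infinite degree has at most $n$ of them, and this single fact drives both halves of the argument.

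For the bound $U(T_n)\le n+1$ I would estimate $U(\type(a/B))$ for a single element $a$ and weakly closed $B$, exactly as in the proof of \Cref{theoremOmegaRank}. When $\type(a/B)$ forks over $\varnothing$ we have $a\in\cl^{*}(B)$, so by \Cref{lmaGraphsClosureUniquePath} there is a unique path $a,c_1,\dots,c_\ell$ from $a$ into $B$, and a forking chain splits into an entry step (Case (1)), a block of steps revealing successive non-algebraic path vertices (Case (3)), and a terminal step forcing $a$ into the algebraic closure of the base (Case (2)); the Lascar inequalities bound the U-rank by the number of such steps. The key claim is that every Case (3) step needs the vertex just above it on the path to have infinite degree---to furnish the infinitely many alternatives witnessing non-algebraicity---and that such a vertex must carry a replica of the entire downstream branch; tracing the longest path through this replica shows the bottom spine vertex $c_1$ has a path on $(\ell-1)+1+(\ell-1)=2\ell-1$ edges. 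The eccentricity bound forces $\ell\le n$, hence at most $n-1$ Case (3) steps and total length at most $1+(n-1)+1=n+1$. (For $n=0$ there is no infinite degree, the entry and terminal steps coincide, and one recovers $U(T_0)=1$ as for any bounded-degree component.)

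For the matching lower bound I would exhibit the extremal configuration. Take a path $a=c_0-c_1-\cdots-c_n$ in which each spine vertex $c_1,\dots,c_n$ has infinite degree, carrying infinitely many pairwise-isomorphic branches that replicate the downstream structure $c_{i-1}\cdots c_0$, with $a$ a leaf; a direct computation gives $c_1$ eccentricity exactly $2n-1$, so the structure lies in $\kplusnbar$ and, by \Cref{lmaGenericSatTUniv}, embeds $\le^{*}$-closedly into $\mathbb{M}_n$. Revealing the spine from the top down produces
\[ \type(a)\subset\type(a/c_n)\subset\type(a/c_{n-1}c_n)\subset\cdots\subset\type(a/c_1\cdots c_n)\subset\type(a/ac_1\cdots c_n), \]
a forking chain of length $n+1$: the first inclusion forks by Case (1), each of the next $n-1$ by Case (3) (the revealed vertex being non-algebraic because its parent has infinite degree), and the last by Case (2). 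Thus $U(\type(a))=n+1$ and $U(T_n)=n+1$.

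Finally, for 1-basedness I would show that the canonical base of every stationary type is algebraic over any realization, isolating precisely the feature that fails for $T_\omega$. There the obstruction was the tree $\mathbb{T}_\infty$ coming from $\mathfrak{p}(x)$: its complete homogeneity makes two adjacent vertices non-algebraic over one another, so $\type(a/b)$ carries the canonical base $\dcl^{\eq}(b)$ with $b\notin\acl(a)$. In $\kplusn$ this is impossible, since---by the eccentricity computation above---a chain of infinite-degree vertices has length at most $n$; hence every infinite-degree vertex lies within bounded distance of a ``boundary'' vertex whose remaining branches are finite. This broken homogeneity anchors definability: a leaf defines its neighbour, a vertex whose unique infinite-degree neighbour lies on one side is defined from that side, and, working inward along the unique paths of \Cref{lmaGraphsClosureUniquePath}, each infinite-degree vertex relevant to a type is recovered from a realization, so its code lies in $\acl(a)$. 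Granting this, for algebraically closed $B$ the cut vertex where the unique path from $a$ enters $B$---which codes $\mathrm{Cb}(\type(a/B))$---lies in $\acl(a)\cap B$, and \Cref{lmaForking} yields non-forking over $\acl(a)\cap B$, i.e. 1-basedness. I expect this recoverability statement to be the main obstacle: one must check that near the boundary the finitely many exceptional branches really do distinguish an infinite-degree vertex from its infinitely many isomorphic siblings, so that the anchoring induction can both start and propagate; triviality (\Cref{theoremSuperStability}) should then let one reduce canonical bases of arbitrary types to those of the one-dimensional pieces treated here.
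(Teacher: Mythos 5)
Your treatment of the U-rank is essentially the paper's own argument. The extremal configuration you describe is exactly the tree $\langle A_{n},a,b_{1},\ldots,b_{n}\rangle$ built in the paper, yielding the same forking chain of length $n+1$ via Cases (1), (3), \ldots, (3), (2) of \Cref{lmaForking}; and your upper bound is the same reflection trick (an infinite-degree spine vertex forces a conjugate copy of the downstream branch, doubling the path along the unique connection from $a$ into the base and violating the defining axiom of $\kplusn$), differing from the paper only in bookkeeping --- you measure the long path from $c_{1}$, the paper from its $b_{n+1}$ --- and carrying the same (acceptable) level of informality about why the relevant spine vertices have infinite degree. That half is fine.

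The 1-basedness half has a genuine gap. Your plan is to prove directly that canonical bases are algebraic over realizations by an ``anchoring induction'' that recovers each infinite-degree vertex on the connecting path from a realization $a$, and you yourself flag this recoverability claim as the main obstacle. It is not a technicality: it is exactly the point where $T_{n}$ differs from $T_{\omega}$, where the homogeneity of $\mathbb{T}_{\infty}$ defeats precisely this recovery and makes the theory non-1-based. For $n\geq 2$ adjacent infinite-degree vertices do occur in models of $T_{n}$, so distinguishing a spine vertex from its infinitely many siblings over the portion of the configuration already anchored requires a real argument about how the bounded-eccentricity axiom breaks homogeneity near $a$; nothing in your sketch supplies it, and the closing step (that the entry point of the path into $B$ codes the canonical base and lies in $\acl(a)\cap B$) is asserted rather than derived. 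The paper avoids all of this: by \Cref{theoremSuperStability} the theory $T_{n}$ is trivial and superstable, and once the U-rank is known to be finite, 1-basedness is immediate from Proposition 9 of \cite{Goode_SomeTrivial} (a trivial superstable theory of finite U-rank is 1-based). You are one step away from this --- you already invoke triviality to reduce to 1-types --- but as written the 1-basedness claim is not proved.
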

  \begin{proof}
   For $ n=0, $ note that for any $ A\subseteq M\in\overline{\mathcal{K}_{1}} $ we have that $ \cl^{*}_{M}(A)=\acl_{M}(A). $ Now working in $ \mathbb{M}_{1}, $ for an element $ a $ and sets $ A\subset B $ if $ a\not\in\cl^{*}(B), $ by \Cref{lmaForking}, we have that $ \forkindep{a}{A}{B}. $ On the other hand, if $ a\in\cl^{*}(B), $ then $ a $ is algebraic over $ B $ and any superset of $ B $ cannot yield a forking extension for $ \type(a/B). $  Hence, the U-rank is always less than or equal to one. Moreover, when $ a\in\cl^{*}(B) $ the following is an example of a forking chain of length 1
        \[ \type(a)\subset\type(a/B). \]
  
  For each $ n\geq1, $ first we construct a structure $ \langle A_{n},a,b_{1},\ldots,b_{n}\rangle. $ Let $ A_{1} $ be a structure consisting of an element $ b_{1} $ with infinitely many copies of an element $ a $ all being connected to $ b_{1} $ by an edge. Let $ \langle A_{2},a,b_{1},b_{2}\rangle $ be a structure with a new element $ b_{2} $ that is connected to $ b_{1} $ together with infinitely many copies of $ A_{1} $ over $ b_{2} $ all having the same type over $ b_{2}. $ Proceed inductively, having $ \langle A_{n-1},a,b_{1},\ldots,b_{n-1}\rangle $ for $ n\geq3, $ let $ \langle A_{n},a,b_{1},\ldots,b_{n}\rangle $ be the structure that is obtained by adding a new element $ b_{n} $ connected to $ b_{n-1} $ together with infinitely many copies of $ \langle A_{n-1},a,b_{1},\ldots,b_{n-1}\rangle $ over $ b_{n} $ all having the same type as $ \langle A_{n-1},a,b_{1},\ldots,b_{n-1}\rangle $ over $ b_{n}. $ The following diagram displays $ \langle A_{2},a,b_{1},b_{2}\rangle. $
  \begin{mytikzpicture}
  	\node[label=above:$ b_{1} $](b1){};
  	\node[right=1.5cm of b1,label=above:$ a $](a){};
  	\node[below= .5cm of a](a11){};
  	\node[below= .5cm of a11](a12){};
  	
  	\node[left=1.5cm of b1,label=above:$ b_{2} $](b2){};
  
  	\node[below =2cm of b1](b21){};
  	\node[right=1.5cm of b21](a20){};
  	\node[below= .5cm of a20](a21){};
  	\node[below= .5cm of a21](a22){};
  	
  	\tikzstyle{every node} = [];
  	\node[below right=1.25cm and .5cm of b1,rotate=90](d1){$ \cdots $};
  	\node[below right=1.25cm and .5cm of b21,rotate=90](d2){$ \cdots $};
  	\node[below right=1.75cm and .5cm of b2,rotate=90](d3){$ \cdots $};
  	
  	\draw (b2) -- (b1) -- (a)
  		  (b1) -- (a11)
  		  (b1) -- (a12)
  		  (b2) -- (b21) -- (a20)
  		  (b21) -- (a21)
  		  (b21) -- (a22);
  \end{mytikzpicture}
Note that $ \langle A_{n},a,b_{1},\ldots,b_{n}\rangle\in\kplusnbar, $ hence $ \mathfrak{M}_{n} $ realizes every finite part of it. Hence, by describing the closure type of $ ab_{1},\ldots,b_{n} $ in $ A_{n} $ and using \Cref{theoremQE}, one can realize $ \langle A_{n},a,b_{1},\ldots,b_{n}\rangle $ in $ \mathbb{M}_{n} $ such that for each $ 1\leq i\leq n, $ the path $ ab_{1}\cdots b_{i-1} $ is not algebraic over $ b_{i+1}\cdots b_{n}. $ Hence, using Case 3 in \Cref{lmaForking}, the following is a forking chain of length $ n+1 $
  \[ \type(a/\varnothing)\subset\type(a/b_{n})\subset\type(a/b_{n-1}b_{n})\subset\cdots\subset\type(a/b_{1}\cdots b_{n})\subset\type(a/ab_{1}\cdots b_{n}). \]

Now, suppose that in $ \mathbb{M}_{n}, $ there is an element $ a $ and subsets $ B_{1}\subseteq\cdots\subseteq B_{n+2} $ that form a forking chain of length $ n+2 $ for the non-algebraic type $ \type(a/B_{1}). $ Using \Cref{lmaForking}, there must be a path $ \bar{p} $ that connects $ a $ to $ B_{1} $ such that for each $ 1\leq i\leq n+2 ,$ the path $ \bar{p} $ intersects $ \acl(B_{i})\backslash\acl(B_{i-1}). $ Hence, $ \bar{p}\cap\acl(B_{n+2}) $ has at least $ n+1 $ many elements. For each $ 1\leq i\leq n+2, $ let $ b_{i} $ be the element in $ \bar{p}\cap \acl(B_{i})\backslash\acl(B_{i-1}) $ with the minimum distance to $ a. $ Note that we might have $ b_{n+2}=a. $

For each $ 2\leq i\leq n+2, $ the element $ b_{i} $ is not algebraic over $ b_{i-1}. $ Therefore, there exists a distinct copy of $ \bar{p}\backslash\acl(B_{1}) $ over $ b_{1}. $ Let denote this path by $ \bar{p}'; $ also, for each $ i\geq 2, $ denote the corresponding elements by $ b'_{i}. $ Hence, there exists a path connected to $ b_{n+1} $ and extending $ b_{n}\cdots b_{1}b'_{2}\cdots b'_{n+2} $ which is of length at least $ 2n+1. $ On the other hand, since obviously $ \bar{p}\backslash\acl(B_{n+1}) $ is not algebraic over $ b_{n+1}, $ we have that $ \deg(b_{n+1})=\infty. $ This contradicts the axioms of $ \kplusn $ and proves that U-rank equals $ n+1 $ in $ T_{n}. $

1-basedness is obtained by Proposition 9 in \cite{Goode_SomeTrivial} and the fact that $ T_{n} $ is superstable, trivial and of finite U-rank.
\end{proof}

\begin{remark}
The results in this section can be obtained without using ultraflat graphs. More precisely, one can directly prove uniqueness for d-independence as well as the extension property over the algebraically closed sets. Then, using uniqueness for d-independence, superstability can be obtained by directly counting the types. \Cref{lmaForking} can also be proved by uniqueness as well.
\end{remark}

\section{\bf Pseudofiniteness and Decidability}\label{secPSF}
In this section, we prove that for each positive natural number $ n, $ the theory $ T_{n} $ is pseudofinite and decidable. Recall that a complete theory $ T $ is pseudofinite if for each $ \varphi\in T $ there exists a finite structure satisfying $ \varphi. $ This is equivalent to the fact that an ultra-product of finite structures satisfies $ T. $

The proof of \Cref{theoremPsedofinite} proceeds similar to the proof of Theorem 3.3.2 of \cite{Spencer-StrangeLogic}. Recall that a \textit{rooted tree} is a tree with a distinguished vertex $ \textbf{a} $ such that other vertices of the tree are considered to be $ \textbf{a} $'s children, grandchildren, etc. For each such tree and for a pair of positive integers $ (r,s), $ one can inductively define a function, called $ (r,s) $\textit{-value}, that provides a counting criterion determining the $ r $-neighbourhood of the root $ \textbf{a} $ by considering any degree greater than $ s $ as ``many''. The $ (r,s) $-value, is in fact a description of a finite fragment of $ \cl^{*}(\textbf{a}) $ that, to some extent, goes parallel to the way that closure formulas describe $ \cl^{*}(\textbf{a}). $

\begin{definition}\label{dfnRSValue}
Suppose that $ r,s\in\omega\backslash\{0\}. $
\begin{itemize}
\item[(i)]  The set $ \valc(r,s) $ is defined inductively as follows
\begin{itemize}
\item[(1)] $ \valc(1,s)=\{0,1,\ldots,s,\infty\}. $
\item[(2)] $ \valc(r+1,s):=\{\sigma:\valc(r,s)\longrightarrow\{0,1,\ldots,s,\infty\}\}. $ 
\end{itemize}
\item[(ii)] The $ (r,s) $-\textit{value} of a rooted tree $ \langle \textbf{T},\textbf{a}\rangle, $ denoted by $ \val_{(r,s)}\langle \textbf{T},\textbf{a}\rangle, $ is defined inductively on $ r. $ For $ r=1, $ let
\[ \val_{(1,s)}\langle \textbf{T},\textbf{a}\rangle:= 
\begin{cases}
\card[\Nr^{1}(\textbf{a})] &\quad\text{if}\quad \card[\Nr^{1}(\textbf{a})]\leq s,\\
\infty &\quad\text{otherwise.}
\end{cases}\] 
Having defined $ \val_{(r,s)}\langle \textbf{T},\textbf{a}\rangle $ for every rooted tree $\langle \textbf{T},\textbf{a}\rangle, $ we define $ \val_{(r+1,s)}\langle \textbf{T},\textbf{a}\rangle\in\valc(r+1,s) $ as follows. For every $ \sigma\in\valc(r,s), $ let  
\[ V_{\sigma}:=\{b\in\Nr^{1}(\textbf{a})|\val_{(r,s)}\langle \textbf{T},b\rangle=\sigma\}, \]  
where $ \langle \textbf{T},b\rangle $ is the subtree of $ \textbf{T} $ consisting of $ b $ as a root and all its children. Set
\[ \val_{(r+1,s)}\langle \textbf{T},\textbf{a}\rangle(\sigma):= 
\begin{cases}
\card[V_{\sigma}] &\quad\text{if}\quad \card[V_{\sigma}]\leq s,\\
\infty &\quad\text{otherwise.}
\end{cases}\] 
\end{itemize}
\end{definition} 

Now, to prove completeness for $ \univ_{n}, $ we use finite Ehrenfeucht-\fraisse  games. 
It worth noting that when a $ k $-round Ehrenfeucht-\fraisse game starts by selecting the root $ \textbf{a} $, actually all vertices of degree greater than $ k $ play the same role in the game as the vertices of degree $ k. $ This is the significance of using $ (r,k-1) $-values, for an appropriately chosen  $ r, $ to handle the situations that we encounter in such a game.

Also, recall that a \textit{Distance Ehrenfeucht-\fraisse} game is an Ehrenfeucht-\fraisse game with the additional property that the Duplicator wins only if the distance of her selected elements be the same as the distance of the Spoiler's selected elements. The $ r $-neighbourhoods of two elements $ a $ and $ b $ are called $ k $\textit{-similar} if the Duplicator wins the $ k $-round Distance Ehrenfeucht-\fraisse game that is played over their $ r $-neighbourhoods and is started by selecting $ a $ and $ b. $ We recall the following facts from \cite{Spencer-StrangeLogic}. 

\begin{fact}(Theorem 3.3.1 of \cite{Spencer-StrangeLogic})\label{factValueSimilarNeighbour}
Let $ \langle\textbf{T}_{1},\textbf{a}_{1}\rangle $ and $ \langle\textbf{T}_{2},\textbf{a}_{2}\rangle $ be two rooted trees which have the same $ (r,s-1) $-value, for some $ r $ and $ s. $ Then, $ \textbf{a}_{1} $ and $ \textbf{a}_{2} $ have $ s $-similar $ r $-neighbourhoods.
\end{fact}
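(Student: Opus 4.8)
The plan is to prove the statement by induction on the radius $r$, establishing the stronger assertion that equality of $(r,s-1)$-values yields an explicit winning strategy for the Duplicator in the $s$-round Distance Ehrenfeucht-\fraisse game played over the neighbourhoods $\Nr^{r}[\textbf{a}_{1}]$ and $\Nr^{r}[\textbf{a}_{2}]$ with the roots $\textbf{a}_{1},\textbf{a}_{2}$ matched as the opening pair, and with $s$ quantified universally so that the induction hypothesis at radius $r$ may be invoked at any round-parameter. The guiding principle is the classical observation that an $s$-round game can only force the Duplicator to exhibit at most $s$ pairwise distinct witnesses; consequently, counting the children of a vertex carrying a prescribed subtree-type only up to the threshold $s-1$, with the value $\infty$ standing for ``at least $s$ such children'' (exactly as in \Cref{dfnRSValue}), records precisely the multiplicity information an $s$-round game can detect.

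For the base case $r=1$, the $1$-neighbourhood of each root is a star, and the $(1,s-1)$-value is the number of children, capped at $s-1$. Matching children is then a pure counting problem: if the capped counts agree, then either there is an honest bijection of the two child sets, or both sides carry at least $s$ children, and in either case the Duplicator can answer any child selected by the Spoiler by a still-unused child on the other side, the supply never failing across $s$ rounds; all relevant distances (root to child, and child to child through the root) are preserved automatically. For the inductive step I would decompose $\Nr^{r+1}[\textbf{a}]$ into the root together with the rooted subtrees $\langle \textbf{T},b\rangle$ hanging at the children $b\in\Nr^{1}(\textbf{a})$, and sort these children into the classes $V_{\sigma}=\{b:\val_{(r,s-1)}\langle \textbf{T},b\rangle=\sigma\}$ of \Cref{dfnRSValue}. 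Equality of the $(r+1,s-1)$-values says exactly that, for every type $\sigma\in\valc(r,s-1)$, the two trees carry the same number of children in class $V_{\sigma}$, again capped at $s-1$.

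The Duplicator's strategy then combines a level-by-level type-matching with the induction hypothesis applied inside the subtrees. When the Spoiler descends into a child $b$ of an already-matched vertex, the Duplicator selects a child $b'$ of the corresponding vertex lying in the same class $V_{\sigma}$; such a $b'$ exists by the capped-count agreement, the $\infty$ threshold guaranteeing enough distinct representatives for all $s$ rounds. Since $b$ and $b'$ share the same $(r,s-1)$-value, they a fortiori share the same $(r,s-2)$-value, so by the induction hypothesis at radius $r$ and parameter $s-1$ their $r$-neighbourhoods are $(s-1)$-similar, which covers the remaining play beneath them. Because $\textbf{T}_{1}$ and $\textbf{T}_{2}$ are trees, every vertex of $\Nr^{r+1}[\textbf{a}]$ is reached from the root by a unique path; hence the distances the Distance game must preserve are determined entirely by the levels of the matched vertices and by which branch they descend through, and the whole matching is organized as a tree of local, class-respecting matchings, so distance preservation is automatic.

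The step I expect to require the most care is the simultaneous bookkeeping of the round budget and the distance constraint when the Spoiler does not merely descend but jumps between branches or re-enters a branch visited earlier. One must check that a single global budget of $s$ rounds suffices to invoke the induction hypothesis at full strength in every subtree that play reaches, and that re-selection inside an already-visited branch forces the Duplicator to respond consistently with its earlier commitments, so that distances to all previously played vertices remain exact. The tree structure keeps this tractable, since distinct branches are distance-additive through their common ancestor; nonetheless, verifying that the threshold $s-1$ (rather than $s$) is exactly what makes $\infty$ encode ``at least $s$ witnesses'', and thus what makes one global $s$-round budget propagate correctly through all levels at once, is the genuine content of the argument.
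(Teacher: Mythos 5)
First, note that the paper offers no proof of \Cref{factValueSimilarNeighbour}: it is quoted verbatim from Spencer's book (Theorem 3.3.1 of \cite{Spencer-StrangeLogic}), so there is no in-paper argument to compare against. Your architecture --- induction on $r$ with the round parameter quantified universally, the base case as a pure counting problem on stars, and the inductive step sorting children into the classes $V_{\sigma}$ and delegating play inside a branch to the induction hypothesis, with distance preservation reduced to distances-to-the-sub-root via additivity through common ancestors in a tree --- is exactly the standard proof of this result, and the base case and the child-matching bookkeeping (exact bijection below the threshold, ``never runs out'' at $\infty$) are correct.

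There is, however, a genuine off-by-one gap in the inductive step. When the Spoiler first enters the branch at a child $b$, he does so by playing an actual vertex $v$ of that branch; the anchors $b,b'$ are only mental bookkeeping and cost no round. So if the Spoiler concentrates all $s$ of his moves in a single branch, the sub-game rooted at $(b,b')$ must absorb $s$ selections, and the $(s-1)$-similarity you derive is not enough. Concretely, with $s=2$ and $r=1$: two vertices $b,b'$ with $2$ and $1$ children respectively share the $(1,0)$-value (both $\infty$) and are $1$-similar, yet the Duplicator loses when the Spoiler plays both children of $b$. Your weakening from ``same $(r,s-1)$-value'' to ``same $(r,s-2)$-value, hence $(s-1)$-similar'' is precisely what admits such a mismatch; it also smuggles in the unproved (though true) claim that the $(r,s-1)$-value determines the $(r,s-2)$-value. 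The repair is immediate and costs nothing: $b$ and $b'$ lie in the same class $V_{\sigma}$ with $\sigma\in\valc(r,s-1)$, so they share the same $(r,s-1)$-value, and the induction hypothesis at radius $r$ and parameter $s$ (not $s-1$) already yields that their $r$-neighbourhoods are $s$-similar, which covers even the worst case of all $s$ rounds landing in one branch. With that substitution your proof goes through.
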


\begin{fact}(Theorem 2.6.6 of \cite{Spencer-StrangeLogic})\label{factDupWinsEHR}
Suppose that $ \textbf{G}_{1} $ and $ \textbf{G}_{2} $ are two graphs and $ r=\frac{3^{k}-1}{2}. $ Moreover, suppose that
\begin{itemize}
\item[(i)] for every $ y\in \textbf{G}_{2} $ and $ x_{1},\ldots,x_{k-1}\in \textbf{G}_{1} $ there exists $ x\in \textbf{G}_{1} $ with $ x $ and $ y $ having $ k $-similar $ r $-neighbourhoods and $ \dist(x,x_{i})>2r+1 $ for $ 1\leq i\leq k-1. $ And
\item[(ii)] for every $ x\in \textbf{G}_{1} $ and $ y_{1},\ldots,y_{k-1}\in \textbf{G}_{2} $ there exists $ y\in \textbf{G}_{2} $ with $ x $ and $ y $ having $ k $-similar $ r $-neighbourhoods and $ \dist(y,y_{i})>2r+1 $ for $ 1\leq i\leq k-1. $
\end{itemize}
Then the Duplicator wins the $ k $-round Ehrenfeucht-\fraisse game played over $ \textbf{G}_{1} $ and $ \textbf{G}_{2}. $
\end{fact}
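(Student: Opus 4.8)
The statement is a locality criterion of Hanf type, and the plan is to convert the hypothesised single-point neighbourhood similarities into a global winning strategy for the Duplicator by a back-and-forth argument run as a (reverse) induction on the rounds. The governing quantities are the radii $r_j := \frac{3^{j}-1}{2}$, for which $r_k = r$, $r_0 = 0$, and, crucially, $r_j = 3r_{j-1}+1$; this recursion is the engine that lets the relevant inspection radius shrink by roughly a factor of three after each round while the separations stay large enough that the local games never interact. The whole point is that a $k$-round \emph{global} game on $\textbf{G}_1,\textbf{G}_2$ can be played as a bounded collection of independent \emph{local} Distance games, one around each ``cluster'' of already-chosen points.

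Concretely, I would have the Duplicator preserve, after $i$ rounds with chosen points $u_1,\dots,u_i\in\textbf{G}_1$ and responses $v_1,\dots,v_i\in\textbf{G}_2$, the invariant $\mathcal J_i$ with two clauses. First, the played points partition into matched clusters (new clusters being created, and old ones enlarged, by the strategy described below) such that any two distinct clusters are more than $2r_{k-i}+1$ apart in both graphs. Second, for each matched cluster the union of the $\Nr^{r_{k-i}}$-neighbourhoods of its $\textbf{G}_1$-points and the corresponding union in $\textbf{G}_2$ are $(k-i)$-similar, i.e. the Duplicator wins the $(k-i)$-round Distance game on them with the cluster points as the initial marking. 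The base case $\mathcal J_0$ is vacuous. At the end, $\mathcal J_k$ has $k-i=0$ and $r_0=0$: clause two then supplies a $0$-round Distance game, whose winning condition already forces the pairwise distances among the marked points of each cluster to agree (in particular equality and adjacency are preserved), while clause one makes points of distinct clusters more than $1$ apart, hence distinct and non-adjacent, in both graphs. Thus $u_j\mapsto v_j$ is a partial isomorphism and the Duplicator has won.

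For the inductive step assume $\mathcal J_i$ and let the Spoiler play $x\in\textbf{G}_1$ (a move in $\textbf{G}_2$ is symmetric, using hypothesis (i) in place of (ii)). I split on the position of $x$. If $x$ lies within distance $2r_{k-i-1}+1$ of some point of an existing cluster $\mathcal C$, then, since $2r_{k-i-1}+1\le r_{k-i}$, the point $x$ sits inside the neighbourhood-union attached to $\mathcal C$; I let the Duplicator answer by her winning strategy in that local Distance game, which consumes one round and, by the factor-three radius drop, leaves the residual $(k-i-1)$-round game equivalent to the Distance game on the $\Nr^{r_{k-i-1}}$-neighbourhoods of the enlarged cluster $\mathcal C\cup\{x\}$ and its image. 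If instead $x$ is more than $2r_{k-i-1}+1$ from every previously chosen point, then, as at most $i\le k-1$ points have been played, I invoke hypothesis (ii) with $y_1,\dots=v_1,\dots,v_i$ to obtain $v_{i+1}\in\textbf{G}_2$ that is more than $2r+1$ from all earlier responses and whose $\Nr^{r}$-neighbourhood is $k$-similar to $\Nr^{r}[x]$; this $v_{i+1}$ opens a fresh singleton cluster, and since $k$-similarity at radius $r=r_k$ is at least as strong as the $(k-i-1)$-similarity at radius $r_{k-i-1}$ demanded by clause two (monotonicity in both the round count and the radius), the new cluster satisfies the invariant, while the separation $2r+1\ge 2r_{k-i-1}+1$ keeps it away from the others.

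The step I expect to be the main obstacle is checking that $\mathcal J_{i+1}$, especially the separation clause, really survives each move, since this is where the arithmetic of $r_j=3r_{j-1}+1$ does its work. One must verify that a ``close'' move can be absorbed by exactly one cluster (two clusters within reach of a common point would lie within $4r_{k-i-1}+2<2r_{k-i}+1$ of each other, contradicting $\mathcal J_i$), and that after enlarging a cluster the gap to every other cluster still exceeds $(2r_{k-i}+1)-(2r_{k-i-1}+1)=4r_{k-i-1}+2$, which is more than $2r_{k-i-1}+1$, by the triangle inequality. Equally, the old clusters must be seen to retain clause two after the radius and round count drop, which is once more the monotonicity of similarity. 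None of these are deep once the constants are pinned down, but organising the played points into well-separated clusters so that the local Distance games stay genuinely independent over all $k$ rounds is the delicate heart of the argument; with that bookkeeping in place, iterating the step $k$ times and reading off $\mathcal J_k$ as above finishes the proof.
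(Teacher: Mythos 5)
Your proposal is correct and coincides with the proof of Theorem 2.6.6 in \cite{Spencer-StrangeLogic}, which the paper quotes as a Fact without reproving it: the same reverse induction on rounds with radii $ r_j=\frac{3^{j}-1}{2} $ satisfying $ r_j=3r_{j-1}+1, $ the same invariant of well-separated matched clusters carrying $ (k-i) $-round Distance games on radius-$ r_{k-i} $ neighbourhoods, and the same dichotomy between absorbing a close move into the unique nearby cluster versus invoking hypothesis (i) or (ii) to open a fresh cluster, with the separation arithmetic checked exactly as in the source. Since the paper defers to Spencer for this statement, there is no divergence to report.
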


\begin{theorem}\label{theoremPsedofinite}
  For every $ 1\leq n\in\omega, $ the theory $ \univ_{n} $ is complete. Consequently, $ T_{n} $ is decidable and pseudofinite.
\end{theorem}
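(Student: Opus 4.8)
The plan is to prove completeness of $\univ_{n}$ by showing that any two of its models are elementarily equivalent via winning the $k$-round Ehrenfeucht--\fraisse game for every $k$, and then to read off the two corollaries. Since $\mathfrak{M}_{n}\models\univ_{n}$ by \Cref{lmaGenericSatTUniv}, completeness immediately gives $T_{n}=\univ_{n}$; as membership in $\kplusn$ is decidable and each $\gamma_{m}^{*}$ is computable, $\univ_{n}$ is recursively axiomatised (\Cref{dfnTUniv}), so a complete $\univ_{n}$ is decidable. Pseudofiniteness will then follow by approximating $\mathfrak{M}_{n}$ in the game by explicit finite forests.

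For completeness I fix $k$, put $r=\tfrac{3^{k}-1}{2}$, and work throughout with $(r,k-1)$-values in the sense of \Cref{dfnRSValue}. Since every model is acyclic, the $r$-neighbourhood $\Nr^{r}[a]$ of a vertex $a$ is a rooted tree, and I attach to $a$ the value $\val_{(r,k-1)}\langle\Nr^{r}[a],a\rangle$. The crucial ingredient is a richness statement: in any $M\models\univ_{n}$ the realized $(r,k-1)$-values are exactly those of finite rooted trees $\langle\mathbf{T},\mathbf{a}\rangle$ with $\mathbf{T}\in\kplusn$, and every realized value is attained by vertices arbitrarily far from any prescribed finite set. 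The forward inclusion uses that $\kplusn$ is closed under substructures, so the ball of any vertex truncates to a finite tree in $\kplusn$ of the same value; the backward inclusion and the far-apart realization use the universality axioms of $\univ_{n}$ exactly as in the proof of \Cref{lmaGenericSatTUniv}, embedding $|F|+1$ mutually free copies of $\mathbf{T}$ closed into $M$ so that at least one copy is disconnected from a given finite $F$, hence at distance $>2r+1$. Because this family of realizable values depends only on $n$, for $M_{1},M_{2}\models\univ_{n}$ the hypotheses (i) and (ii) of \Cref{factDupWinsEHR} are satisfied: a value realized at some $y$ in one model is realized far away in the other, and by \Cref{factValueSimilarNeighbour} equal $(r,k-1)$-values yield $k$-similar $r$-neighbourhoods. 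Thus the Duplicator wins for every $k$, so $M_{1}\equiv M_{2}$ and $\univ_{n}$ is complete.

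Decidability is then immediate. For pseudofiniteness I fix $k$ and build a finite forest $F_{k}$ consisting of $k$ pairwise disjoint copies of a finite tree realizing each of the finitely many realizable $(r,k-1)$-values; then $F_{k}\in\kplusnbar$, and since each component is closed, the $r$-ball of its root carries the intended value. The same two facts show that the Duplicator wins the $k$-round game between $F_{k}$ and $\mathfrak{M}_{n}$: condition (ii) of \Cref{factDupWinsEHR} holds because $\mathfrak{M}_{n}$ realizes every value arbitrarily far away, while condition (i) holds because $F_{k}$ carries $k$ disjoint copies of every realizable value, enough to avoid the at most $k-1$ previously chosen vertices. Hence every sentence of quantifier rank at most $k$ in $T_{n}=\theory(\mathfrak{M}_{n})$ holds in $F_{k}$, and since each sentence has finite quantifier rank, every sentence of $T_{n}$ holds in some finite structure.

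The main obstacle is controlling the realizable local values uniformly. The defining $\kplusn$ axiom couples a vertex's degree to the length of the paths it can start, so I must verify that this constraint cuts out precisely the same family of $(r,k-1)$-values in $\mathfrak{M}_{n}$, in an arbitrary model of $\univ_{n}$, and in the finite forests $F_{k}$; in particular I must check that infinite-degree vertices (value $\infty$) are correctly matched by finite vertices of degree exceeding $k-1$. Once the equality of the realizable-value sets and the far-apart extension property are secured through free amalgamation, the game-theoretic bookkeeping via \Cref{factValueSimilarNeighbour} and \Cref{factDupWinsEHR} is routine.
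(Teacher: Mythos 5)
Your completeness argument is essentially the paper's: both fix $k$, set $r=\tfrac{3^{k}-1}{2}$, replace the (possibly infinite) neighbourhood of a chosen vertex by a finite rooted tree in $\kplusn$ with the same $(r,k-1)$-value, invoke \Cref{factValueSimilarNeighbour} to get $k$-similar $r$-neighbourhoods, and use the universality axioms (via the free-amalgamation trick in the proof of \Cref{lmaGenericSatTUniv}) to plant that tree disconnected from the previously chosen points, so that \Cref{factDupWinsEHR} applies. Your repackaging of this as a ``richness statement'' about the set of realizable values is a harmless reorganization, and like the paper you leave the truncation step (that the pruned ball lies in $\kplusn$ and keeps its value) at the level of an induction on $r$; your explicit flagging of the interaction between the degree--path axiom of $\kplusn$ and the $\infty$-entries of the value is a fair point that the paper also glosses over. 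Where you genuinely diverge is pseudofiniteness: the paper enumerates $\kplusn$ as $\{A_{i}\}_{i\in\omega}$, forms the finite free amalgams $B_{i}$ of $A_{0},\ldots,A_{i}$ over the empty set, and observes that a non-principal ultraproduct $\prod_{\mathcal{U}}B_{i}$ models $\univ_{n}$, which by completeness equals $T_{n}$. You instead build, for each $k$, one finite forest $F_{k}$ containing $k$ disjoint copies of a tree for each realizable $(r,k-1)$-value and win the $k$-round game between $F_{k}$ and $\mathfrak{M}_{n}$ directly; your observation that $k-1$ previously chosen vertices can block at most $k-1$ of the $k$ disjoint components carrying a given value is exactly what makes condition (i) of \Cref{factDupWinsEHR} hold in $F_{k}$. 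Both routes are correct: the ultraproduct argument is shorter once completeness is in hand, while yours is more constructive, yields an explicit finite model for each quantifier rank, and does not actually need completeness to conclude pseudofiniteness. You also spell out the routine step, left implicit in the paper, that completeness together with the recursive axiomatization of $\univ_{n}$ gives decidability.
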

\begin{proof}
 Suppose that $ k $ is a positive integer and $ M_{1},M_{2}\models\univ_{n}. $ We show that the Duplicator wins the $ k $-round Ehrenfeucht-\fraisse game played over $ M_{1} $ and $ M_{2}. $ Let $ r=\frac{3^{k}-1}{2}, $ we show that the condition (i) in \Cref{factDupWinsEHR} holds for $ M_{1} $ and $ M_{2}. $ Note that $ \cl^{*}_{M_{2}}(y) $ is a possibly infinite rooted tree with root $ y. $ Using \Cref{dfnRSValue} and an induction on $ r, $ we can construct a finite rooted tree  $ \langle\textbf{T},\textbf{a}\rangle\in\kplusn $ with the same $ (r,k-1) $-value as $ \cl^{*}_{M_{2}}(y). $ By \Cref{factValueSimilarNeighbour}, we have that $ \textbf{a} $ and $ y $ have $ k $-similar $ r $-neighbourhoods.
 
 On the other hand, using \Cref{lmaGenericSatTUniv}, there is a closed embedding $f:\langle\textbf{T},\textbf{a}\rangle\longrightarrow M_{1} $ with $ x:=f(\textbf{a}) $ such that $ x $ is not connected to any of the elements $ x_{1},\ldots,x_{k-1}. $ Hence, we have that $ \dist(x,x_{i})=\infty>2r+1, $ for each $ 1\leq i\leq k-1. $ Similarly, one can show that the condition (ii) in \Cref{factDupWinsEHR} holds. Hence, by \Cref{factDupWinsEHR}, we have that $ M_{1}\equiv_{k} M_{2}. $ This shows that $ \univ_{n} $ is complete.

To verify that $ \univ_{n} $ is pseudofinite, let $ \{A_{i}\}_{i\in\omega} $ be an enumeration of the structures in $ \kplusn. $ For each $ i\in\omega, $ let $ B_{i} $ be the free amalgamation of $ A_{0},\ldots,A_{i} $ over the empty set. Now, given a non-principal ultrafilter $ \mathcal{U}, $ it can be seen that the $ \prod_{\mathcal{U}} B_{i} $ is a model of $ \univ_{n}. $
\end{proof}

\begin{remark}
Pseudofiniteness and completeness for $ \univ_{\omega} $ is a direct consequence of Theorem 3.3.2 in \cite{Spencer-StrangeLogic} that is noted by the authors in \cite{Vali&Pourmahd-PSH}.
\end{remark}
\begin{remark}\label{rmk-IkedaUltraflat}
The argument of superstability that is presented in this paper and is based on ultraflatness seems to be the standard way of analysing structures built from the graphs. This method also provides a direct proof for the superstability of the structures introduced in \cite{Ikeda-AbInitioSuperstable}.
\end{remark}

\section*{\bf Acknowledgement}
We would like to thank J. Baldwin, C. Laskowski and D. Macpherson for the helpful discussions we had during our stay at the Intitute Henri Poincar\'e (IHP). Hereby, we also would like to thank IHP and CIMPA for supporting our participation in the trimester MOCOVA 2018 held at the IHP. Also, the authors are thankful to the anonymous referee for his useful suggestions and comments.

\bibliographystyle{plain}
\bibliography{../01_AppFiles/refArticles,../01_AppFiles/refBooks}

\end{document}